\documentclass[11pt,reqno]{amsart}
\usepackage{enumerate}
\usepackage{enumitem}
\usepackage{mathrsfs}
\usepackage{url}
\usepackage{cite}
\usepackage{comment}
\usepackage{fancyhdr}
\usepackage{amsfonts, amsmath, amssymb, amscd, amsthm, bm, cancel}
\usepackage{amsrefs}
\usepackage[%backref=page,
linktocpage=true,colorlinks,citecolor=magenta,linkcolor=blue,urlcolor=magenta]{hyperref}
\newtheorem{thm}{Theorem}[section]

\newtheorem{rmk}{Remark}[section]

\newtheorem{lemma}{Lemma}[section]

\numberwithin{equation}{section}
\allowdisplaybreaks

\begin{document}

\title[Curvature measures and volume preserving flows]{Curvature measures and volume preserving flows in h-convex geometry}

\author[X. Zhang]{Xinxiu Zhang}
\address{School of Mathematical Sciences, University of Science and Technology of China, Hefei 230026, P.R. China}
\email{\href{mailto:zhangxinxiu@mail.ustc.edu.cn}{zhangxinxiu@mail.ustc.edu.cn}}

\date{May 14, 2026}
\subjclass[2020]{53C42; 53E10}
\keywords{modified curvature measures, h-convex, shifted curvature flows, hyperbolic space}
%\thanks{}

\begin{abstract}
In this paper, we introduce a concept of modified curvature measures for h-convex domains in hyperbolic space and prove a shifted version of the generalized Alexandrov theorem, which characterizes geodesic balls in hyperbolic space via modified curvature measures. This result generalizes the classical work of Kohlmann \cite{Kohlmann1998}. We further investigate a volume-preserving shifted $\alpha$-Gauss curvature flow for strictly h-convex hypersurfaces in hyperbolic space and prove that for any $\alpha>0$ the solution exists for all time and converges smoothly to a geodesic sphere. The theory of modified curvature measures developed in the first part serves as a key ingredient in characterizing the Hausdorff limit shape of the flow.
\end{abstract}

\maketitle
\tableofcontents

%-------------------------------------------------------------------------

\section{Introduction}

The classical curvature measures for sets of positive reach in Euclidean space were  introduced by Federer \cite{Federer1959}, which is a concept originating from integral geometry and convex geometry. In \cite{Schneider1979}, Schneider characterized geodesic balls for convex bodies via curvature measures in Euclidean space, which is a result generalizing the classical Alexandrov theorem. Later, Kohlmann \cite{Kohlmann1991} obtained the Steiner formula and extended the notion of the curvature measures to  sets of positive reach in space forms. 
 In addition, in \cite{Kohlmann1998} he obtained a generalized version  of the Alexandrov theorem for convex bodies in space forms. Recently, the curvature measure theory has been applied to characterize the Hausdorff limit shapes of the volume preserving curvature flows, see \cites{ALWX25,AndrewsWei2021, WeiYangZhou2024, WeiYangZhou2025}. 
 
 %For volume preserving flows which are not homogeneous of degree one, the classical argument for smooth convergence of the solutions does not work due to the lack of uniform lower bound on curvature. One technique to overcome this difficulty is to establish the subsequential Hausdorff convergence via the monotonicity of the flows and the subsequential limit shape can be characterized using the generalized Alexandrov theorem in the curvature measure theory.  

In this paper we consider curvature measures and volume preserving curvature flows for h-convex domains in hyperbolic space $\mathbb{H}^{n+1}$. Recall that a compact domain in hyperbolic space is called h-convex if it is convex by horospheres. It is known that a smooth compact domain is h-convex  (resp. strictly h-convex) if and only if all principal curvatures of its boundary are bounded below by $1$ (resp. strictly greater than $1$). The h-convex geometry in hyperbolic space has attracted considerable interest. We refer the readers to \cite{Espinar09,AndrewsChenWei2021,Li-Xu} for the fundamental results on h-convex geometry. 

  We establish two main results:
      
      (1) We first introduce a concept of modified curvature measures for h-convex domains in hyperbolic space, and then prove a generalized Alexandrov type theorem  to characterize geodesic balls via the modified hyperbolic curvature measures. This generalizes the classical result of Kohlmann \cite{Kohlmann1998}.  
      
      (2) We study volume preserving shifted $\alpha$-Gauss curvature flows of h-convex domains for any power $\alpha>0$, and show that the solution exists for all time and converges smoothly to a geodesic sphere. The modified curvature measure theory of h-convex domains established in the first part will be a key ingredient to characterize the Hausdorff limit of the flow. 

     In the following we present our results in more detail.

\subsection{Modified curvature measures in hyperbolic space} We denote by $\mathcal{K}_{n+1}(\mathbb{H}^{n+1})$ the collection of  compact, convex subsets of  $\mathbb{H}^{n+1}$ with nonempty interior.  For any $A\in\mathcal{K}_{n+1}(\mathbb{H}^{n+1})$, Kohlmann \cite{Kohlmann1991} proved that the generalized principal curvatures $\kappa_i(v)\in[0,\infty]\,(1\leq i\leq n)$ are  well defined for $\mathcal{L}^n$-almost every $v$ in the unit normal bundle $\mathcal{N}_A$ of $A$, which is an $n$-dimensional strong Lipschitz submanifold of $T\mathbb{H}^{n+1}$. By the convexity of $A$,  for any $x\in \mathbb{H}^{n+1}$ there exists unique $x'\in A$ such that $d_{\mathbb{H}^{n+1}}\left(x', x\right)=d_{\mathbb{H}^{n+1}}(x, A)$. Hence we can define the projection map $x\mapsto x'=:f_{A}$.  For any $\varepsilon>0$,   the parallel set of $A$ is defined by
$$A_{\varepsilon}=\left\{x \in \mathbb{H}^{n+1} : d_{\mathbb{H}^{n+1}}(x, A) \leq \varepsilon\right\}.
$$ 
Kohlmann \cite{Kohlmann1991} proved  there exist Borel measures $V_0(A, \cdot), \cdots, V_{n+1}(A, \cdot)$ on $\mathbb{H}^{n+1}$ such that for any Borel set $ \beta \in \mathcal{B}\left(\mathbb{H}^{n+1}\right)$ there holds the following Steiner-type formula:
\begin{align}
\text{Vol}_{\mathbb{H}^{n+1}}(f_A^{-1}(\beta)\cap A_\varepsilon)=\sum_{k=0}^{n+1} \ell_{n+1-k}(\varepsilon) V_k(A, \beta),\label{steiner}
\end{align}
where 
$$\ell_{k+1}(t)=\int_0^t \sinh ^k(s) \cosh ^{n-k}(s) d s, \quad k=0,1, \cdots, n$$ 
and $\ell_0\equiv1$.  The coefficients $V_0(A, \cdot), \cdots, V_{n+1}(A, \cdot)$  are called the curvature measures of the convex body $A$ that can be represented as \begin{align*}&V_k(A, \beta):=\binom{n}{k}\int_{\Pi^{-1}(\beta) \cap \mathcal{N}_A} g(v) {E}_{n-k}(v) d \mathcal{L}^n v \quad \text { for }\,0\leq k \leq n, \\ &V_{n+1}(A, \beta):=\mathcal{L}^{n+1}(A \cap \beta),\end{align*}
Here $E_k(v):=\binom{n}{k}^{-1} \sum\limits_{1\leq i_1<\ldots<i_k \leq n} \kappa_{i_1}(v)\cdot\ldots\cdot \kappa_{i_k}(v)$ for $1\leq k\leq n$ and $E_0(v)=1$, $\Pi$ is the  canonical footpoint map from $T\mathbb{H}^{n+1}$ to $\mathbb{H}^{n+1}$  and $$g(v):= \prod_{i=1}^n\left(1+\kappa_i(v)^2\right)^{-1/2}.$$ In section \ref{preli} we will describe the generalized principal curvatures and $g(v)$ in more detail.  When the boundary $\partial A$ is smooth, $V_k(A, \cdot)$ can be expressed as
$$V_k(A, \beta)=\binom{n}{k}\int_{\partial A \cap \beta} E_{n-k}(\kappa) d \mu,$$
where $E_{n-k}(\kappa)$ is the $(n-k)$-th mean curvature of $\partial A$. When $\beta$ is the whole space, we call $V_k(A)=V_k(A,\mathbb{H}^{n+1})$ the curvature integrals of $A$. 

The following characterization of geodesic balls via the hyperbolic curvature measures was proved by Kohlmann \cite{Kohlmann1998} and can be viewed as a generalization of the classical Alexandrov theorem for convex domains in differential geometry.
\begin{thm}[\cite{Kohlmann1998}]\label{kohlmann98thm}
Let $n \geq 2$ and $k \in\{2, \ldots, n\}$. Assume that $A\in\mathcal{K}_{n+1}(\mathbb{H}^{n+1})$  satisfies 
$$V_{n-k}(A, \beta)=c V_n(A, \beta)$$
for any Borel set $\beta \in \mathcal{B}\left(\mathbb{H}^{n+1}\right)$, where $c>0$ is a constant. Then $A$ is a geodesic ball.
\end{thm}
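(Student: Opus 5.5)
The plan follows Schneider's Euclidean argument, transplanted to $\mathbb{H}^{n+1}$ through Kohlmann's Steiner formula \eqref{steiner}. The argument has three stages: show that $A$ is pointwise umbilic with constant curvature, then show that $\partial A$ is $C^{1,1}$ with that curvature, and finally conclude that $A$ is a ball.

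\textbf{Step 1: a pointwise identity on the normal bundle.} Both measures are pushforwards under $\Pi$ of densities on $\mathcal N_A$. Choose $\beta=\Pi(\eta)$ for Borel sets $\eta\subset\mathcal N_A$; near boundary points where $\Pi|_{\mathcal N_A}$ fails to be injective, a localization is needed. The hypothesis then gives
\[
\binom{n}{k}\, g(v)E_{k}(v)=c\,\binom{n}{n}\, g(v)E_0(v)\qquad \text{for } \mathcal L^n\text{-a.e. } v\in\mathcal N_A ,
\]
at least on the part of $\mathcal N_A$ where $g>0$. Here we use that $V_{n-k}$ involves $E_{k}$ and $V_n$ involves $E_0=1$. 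Consequently $E_k(\kappa(v))=c\binom nk^{-1}=:\lambda^k$ almost everywhere wherever all $\kappa_i<\infty$.

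\textbf{Step 2: excluding vertices and flat pieces.} The set of $v$ with some $\kappa_i(v)=\infty$ needs separate treatment; there $g=0$, and these points correspond to "corners". The hypothesis gives $V_{n-k}(A,\beta)=cV_n(A,\beta)$. Meanwhile $V_n(A,\beta)$ is the $n$-dimensional Hausdorff measure of $\partial A\cap\beta$, since in that case $gE_0$ integrates to the surface area. For $k\ge2$, $V_{n-k}$ vanishes on sets of Hausdorff dimension less than $n-k+1$, but $V_n$ vanishes on all sets of zero area. I would use the hypothesis in the direction
\[
V_{n-k}(A,\cdot)\ll V_n(A,\cdot),
\]
combined with a Schneider-type lemma. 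That lemma would say: if $x\in\partial A$ is a singular point (normal cone of dimension at least $2$, or more generally the region where $\kappa_i=\infty$), then $V_{n-k}(A,\{x\})$ or $V_{n-k}$ of small neighborhoods of the singular stratum is not dominated by the area. Since the singular set has zero area, we conclude $V_{n-k}(A,\beta)=0$ on the singular set. Together with the generalized curvatures, this forces the set $\{\kappa_i=\infty\}$ to have $\mathcal L^n$-measure zero in $\mathcal N_A$. This step is where $k\ge2$ enters, and I expect it to be the main obstacle: it requires a careful local analysis of normal cones in hyperbolic space, transferred from the Euclidean case via the Klein model or the exponential map.

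\textbf{Step 3: regularity.} Once curvatures are finite almost everywhere, $E_k=\lambda^k>0$ holds almost everywhere. By Maclaurin, $E_1\ge \lambda$; a Schneider-style argument then shows that the principal radii are bounded. Concretely, $A$ slides freely inside a ball: the parallel body $A_\varepsilon$ together with the measure identity gives a local Steiner estimate, which yields a rolling-ball condition. Hence $\partial A$ is $C^{1,1}$, the curvatures are classical almost everywhere, and $\partial A$ is a $W^{2,\infty}$ hypersurface with $E_k=\mathrm{const}$ almost everywhere.

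\textbf{Step 4: conclusion.} Apply Alexandrov's reflection (moving-plane with totally geodesic hyperplanes) to this weak constant-$k$-mean-curvature convex hypersurface in $\mathbb H^{n+1}$. Equivalently, use the integral Minkowski formulas with the conformal field $\sinh r\,\partial_r$ together with Newton–Maclaurin, which hold for $C^{1,1}$ hypersurfaces. Each of these forces total umbilicity, so $A$ is a geodesic ball.
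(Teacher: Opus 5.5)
\textbf{Step~2 has a genuine gap.} The paper only quotes this theorem from Kohlmann, but its proof of Theorem~\ref{modifiedcurvature} is an explicit transcription of Kohlmann's argument, and comparing with it shows where your route fails.

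What the hypothesis gives on the singular part of $\mathcal N_A$ is exactly the analogue of \eqref{gvek}. Since $\Pi(\tilde{\mathcal N}_A\setminus\tilde{\mathcal N}_{A,0})$ has zero area, $gE_k=0$ a.e.\ there. But this says much less than you need:
\begin{itemize}
\item On the stratum $\tilde{\mathcal N}_{A,r}$ with $r>k$, the density $gE_k$ vanishes \emph{identically}, so nothing is learned.
\item For $r<k$, it only forces $\sigma_{k-r}(\kappa_{r+1},\dots,\kappa_n)=0$. This is compatible with, e.g., straight edges, where the finite curvatures vanish.
\item Only the stratum $r=k$ is actually excluded.
\end{itemize}
Concretely, for $k<n$ a vertex $x$ has $V_{n-k}(A,\{x\})=0=V_n(A,\{x\})$, so the hypothesis is locally blind to it. Your ``Schneider-type lemma'' (singular points are not dominated by area) is therefore false as stated, since domination is automatic from the hypothesis. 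No local measure-theoretic argument of this kind can remove the strata $r>k$. Relatedly, $V_{n-k}$ does charge $(n-k)$-dimensional sets, and $k\ge 2$ is not used where you say it is.

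Step~3 does not repair this. The conditions $E_k=\lambda^k$ and $E_1\ge\lambda$ give no two-sided bound on the individual $\kappa_i$. Moreover, a ball inside which $A$ slides freely yields a \emph{lower} curvature bound, not $C^{1,1}$ regularity; the latter would need a small ball rolling freely inside $A$.

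\textbf{The missing idea is global.} Use Kohlmann's Minkowski formulas on the normal bundle of an \emph{arbitrary} convex body (Theorem~\ref{thmkohlmin}), with $q\in\mathring A$ so that $S_q>0$ by \eqref{positive}. In these formulas the singular strata enter with a definite sign:
\begin{itemize}
\item The order-$k$ formula gives $\int_{\tilde{\mathcal N}_{A,0}}g(\cosh d_q\,E_{k-1}-S_qE_k)\,d\mathcal L^n\le 0$. The remainder is $-\int g\cosh d_q\,E_{k-1}$ over the singular part, using $gE_k=0$ a.e.\ there.
\item The order-$1$ formula gives $\int_{\tilde{\mathcal N}_{A,0}}g(S_qE_1-\cosh d_q)\,d\mathcal L^n=-\int_{\tilde{\mathcal N}_{A,1}}gS_qE_1\,d\mathcal L^n\le 0$.
\end{itemize}
On $\tilde{\mathcal N}_{A,0}$, where $E_k=a^k:=c\binom nk^{-1}$, write
\[
\cosh d_q\bigl(E_{k-1}E_k^{(1-k)/k}-1\bigr)=a^{1-k}(\cosh d_q\,E_{k-1}-S_qE_k)+S_q(E_k^{1/k}-E_1)+(S_qE_1-\cosh d_q).
\]
The left side is nonnegative by $E_{k-1}\ge E_k^{(k-1)/k}$, and each term on the right integrates to something $\le 0$, using $E_1\ge E_k^{1/k}$ for the middle term. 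This Newton--Maclaurin step is where $k\ge2$ is needed. Hence $0\le\cdots\le 0$, which forces equality throughout. This yields simultaneously that $\kappa_i=a$ a.e.\ on $\tilde{\mathcal N}_{A,0}$ and that $\mathcal L^n(\tilde{\mathcal N}_{A,1})=0$. The order-$(r+1)$ formula then kills $\tilde{\mathcal N}_{A,r+1}$ by induction.

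This disposes of vertices and all other singular strata without any regularity theory. Your Steps~3--4 then become unnecessary: the rolling-ball argument, and moving planes for a $W^{2,\infty}$ hypersurface, which would itself require a maximum principle you have not justified.
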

Theorem \ref{kohlmann98thm} has been applied recently by  Wei, Yang and Zhou \cites{WeiYangZhou2024, WeiYangZhou2025} to characterize  the Hausdorff limit shape of the volume preserving Gauss curvature flows by exploiting  the monotonicity of the $n$-th quermassintegral of the enclosed domain along the flows. 

In this paper,  we consider the geometry of h-convex domains. Denote by $\mathcal{H}_{n+1}(\mathbb{H}^{n+1})$ the collection of nonempty, compact, h-convex domains of $\mathbb{H}^{n+1}$. For any $A\in\mathcal{H}_{n+1}(\mathbb{H}^{n+1})$ and $\beta\in\mathcal{B}(\mathbb{H}^{n+1})$, we introduce the modified curvature measures of $A$ by 
\begin{align}&\tilde V_r(A, \beta)=\binom{n}{r} \int_{\Pi^{-1}(\beta) \cap \mathcal{N}_A} g(v) \tilde{E}_{n-r}(v) d \mathcal{L}^n v\nonumber\, \text { for }\,0\leq r \leq n,\nonumber\\
%&=\binom{n}{r}\sum\limits_{i=0}\limits^{r}(-1)^{r-i}\binom{r}{i}\binom{n}{i}^{-1}V_r(A,\beta),
&\tilde{V}_{n+1}(A, \beta)=\mathcal{L}^{n+1}(A \cap \beta),
\label{tildeV}
\end{align}
where $\tilde{E}_r(v)=\binom{n}{r}^{-1} \sum\limits_{1\leq i_1<\ldots<i_r \leq n} (\kappa_{i_1}(v)-1) \cdot \ldots \cdot (\kappa_{i_r}(v)-1)$ for $1\leq r\leq n$ and $\tilde{E}_0(v)=1$. 
We establish the following shifted version of Theorem \ref{kohlmann98thm}.
\begin{thm} \label{modifiedcurvature}
Let $n \geq2$ and $k \in\{2, \ldots, n\}$. Assume that $A\in\mathcal{H}_{n+1}(\mathbb{H}^{n+1})$ satisfying $$\tilde{V}_{n-k}(A, \beta)=c\tilde{V}_n(A, \beta)$$ for any Borel set $\beta \in \mathcal{B}\left(\mathbb{H}^{n+1}\right)$, where $c>0$ is a constant. Then $A$ is a geodesic ball.
\end{thm}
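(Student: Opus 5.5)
Since $\tilde E_r(v)$ is a polynomial in the generalized principal curvatures, I will expand it in terms of the $E_i$. Using $\binom{n}{r}\tilde E_r=\sum_{|I|=r}\prod_{i\in I}(\kappa_i-1)$, one gets
\begin{align*}
\tilde E_r=\sum_{i=0}^r(-1)^{r-i}\binom{r}{i}E_i .
\end{align*}
Hence every $\tilde V_r(A,\cdot)$ is a fixed linear combination of $V_n(A,\cdot),V_{n-1}(A,\cdot),\dots,V_{r}(A,\cdot)$. The hypothesis then becomes a pointwise identity on $\mathcal N_A$: since it holds for all Borel $\beta$, the two measures $\Pi_*(g\,\tilde E_k\,\mathcal L^n)$ and $\Pi_*(g\,\tilde E_0\,\mathcal L^n)$ agree (up to the binomial constants). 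I want to turn this into the pointwise relation $\tilde E_k(v)=c'$ for $\mathcal L^n$-a.e.\ $v\in\mathcal N_A$, where $c'=c\binom{n}{n}/\binom{n}{n-k}$. Since the relation is only an equality of pushforwards under $\Pi$, I will use the structure of $\mathcal N_A$ as in Kohlmann's argument: disintegrate over the fibers, or test against $\beta$ and use that $\Pi$ restricted to the set where all $\kappa_i<\infty$ is injective a.e. The part of $\mathcal N_A$ over singular points (some $\kappa_i=\infty$) has $g=0$ when weighted with $E_0$; weighted with $\tilde E_k$, it carries $g\prod\kappa_{i}$-type mass on the infinite directions.

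The key steps are as follows.
\begin{enumerate}
\item[(i)] \emph{Singular part.} I will first show that the hypothesis forces $A$ to have no singular part of curvature. On the set where some $\kappa_i=\infty$, the density $g\tilde E_k$ is positive for $k\ge 1$, while the density $g\tilde E_0$ vanishes there. Taking $\beta$ to be the $\mathcal H^n$-null set of footpoints carrying this mass, the equality of measures forces that mass to vanish. Consequently $\partial A$ has "bounded curvature" in the measure sense, and in particular no $(n-1)$- or lower-dimensional edges or corners carry $V_{n-k}$-mass.
\item[(ii)] \emph{Regularity.} From the pointwise identity $\tilde E_k(\kappa-1)=c'$ a.e.\ with $\kappa_i\ge1$, I will deduce $C^{1,1}$ regularity of $\partial A$. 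The approach mirrors Kohlmann/Schneider: h-convexity gives the lower bound $\kappa_i-1\ge 0$, and the constraint $\tilde E_k=c'$ then bounds each $\kappa_i$ from above.
\item[(iii)] \emph{Smoothness.} Next I will upgrade to smoothness. The equation $\sigma_k(\kappa-1)=\mathrm{const}$ is elliptic and concave on the Gårding cone, where $\kappa-1$ lies in $\overline\Gamma_k$, so viscosity/weak solutions that are $C^{1,1}$ are smooth by Evans–Krylov and Schauder theory.
\item[(iv)] \emph{Rigidity.} Finally I will conclude by the known shifted Alexandrov theorem for smooth h-convex hypersurfaces with constant shifted $k$-th mean curvature, which are geodesic spheres. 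I would prove this via a shifted Minkowski formula together with the shifted Newton–Maclaurin inequality $\tilde E_k^{1/k}\le\tilde E_1$; the Heintze–Karcher-type argument of Wang–Xia style adapted to h-convex geometry also works.
\end{enumerate}

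An alternative that avoids PDE regularity is to reduce directly to Theorem \ref{kohlmann98thm}. In hyperbolic space a geodesic ball of radius $\rho$ has $\kappa=\coth\rho$. I do not see a direct algebraic reduction of the shifted identity to Kohlmann's unshifted one, so I will follow the regularity route.

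The main obstacle is steps (i)–(ii): passing from the measure identity to a.e.\ pointwise bounds, and handling the singular set of $\mathcal N_A$ where some $\kappa_i=\infty$. There the normalization by $g$ makes $g\tilde E_k$ behave like a product of the remaining finite $(\kappa_j-1)/\sqrt{1+\kappa_j^2}$ factors times $1$. I would first try to imitate Kohlmann's 1998 proof closely, since the shift only changes the polynomial and the sign conditions $\kappa_i-1\ge 0$ play the role of $\kappa_i\ge0$. The needed Alexandrov-type estimate should go through using h-convexity in place of convexity.
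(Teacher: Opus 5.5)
Steps (i) and (ii) do not go through, and they carry the real content of the theorem.

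\textbf{Step (i).} The hypothesis only sees the part of $\mathcal N_A$ where $g\tilde E_k>0$. By \eqref{gtildeE}, $g\tilde E_k\equiv 0$ on $\tilde{\mathcal N}_{A,r}$ for every $r>k$. For $r<k$ it vanishes wherever fewer than $k-r$ of the finite shifted curvatures are positive. Testing with $\beta=\Pi(\text{singular set})$ therefore only gives $g\tilde E_k=0$ a.e.\ on $\tilde{\mathcal N}_A\setminus\tilde{\mathcal N}_{A,0}$, which is the paper's \eqref{gvek}. Lower-dimensional edges and corners carry no $\tilde V_{n-k}$-mass, so the identity $\tilde V_{n-k}=c\tilde V_n$ cannot detect them. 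You have not shown the singular part is absent.

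\textbf{Step (ii).} The claim that $\tilde E_k=c'$ with $\kappa_i-1\ge 0$ bounds each $\kappa_i$ is false for $k\ge 2$. Already for $n=k=2$, the relation $(\kappa_1-1)(\kappa_2-1)=c'$ allows $\kappa_1\to\infty$ as $\kappa_2\to 1$. Even a genuine a.e.\ bound on $\tilde{\mathcal N}_{A,0}$ would not give $C^{1,1}$ without control of the singular strata. For example, the intersection of two overlapping horoballs with distinct centers has $\kappa_i=1$ at a.e.\ boundary point but has an edge. So steps (iii) and (iv) have nothing to start from.

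\textbf{The missing idea.} Run the Minkowski/Newton--Maclaurin argument you reserve for smooth hypersurfaces in (iv) directly on the normal bundle, with no regularity theory.
\begin{enumerate}
\item The paper uses the generalized shifted Minkowski formulas of Theorem \ref{shiftedminkow} for $r=1$ and $r=k$. These come from Kohlmann's nonsmooth formulas via \eqref{relationsbetweenge}.
\item Split each integral into $\tilde{\mathcal N}_{A,0}$ and the singular part. The singular contributions have a sign because $\kappa_i\ge1$ (Lemma \ref{gepricuofh}) and $0<S_q<\cosh d_q\circ\Pi$ for $q\in\mathring{A}$. This gives \eqref{minkowEk} and \eqref{minkowE1}.
\item Combine these with $\tilde E_k=a^k$ a.e.\ on $\tilde{\mathcal N}_{A,0}$ (your injectivity argument for this part is fine) and with $\tilde E_{k-1}\ge\tilde E_k^{(k-1)/k}$, $\tilde E_1\ge \tilde E_k^{1/k}$. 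This forces $\tilde E_1=\tilde E_k^{1/k}$ a.e., hence $\kappa_i=a+1$ a.e.\ on $\tilde{\mathcal N}_{A,0}$.
\item The resulting equalities drive an induction, again through the Minkowski formulas for $r=1,\dots,n$. It shows $\mathcal L^n(\tilde{\mathcal N}_{A,r})=0$ for all $r\ge 1$, which removes exactly the strata that step (i) cannot see.
\item Then $V_{n-k}(A,\cdot)=\binom{n}{k}(a+1)^kV_n(A,\cdot)$, and Theorem \ref{kohlmann98thm} finishes.
\end{enumerate}
So the reduction to Kohlmann's unshifted theorem that you did not find does exist. It is not algebraic: it comes from a.e.\ umbilicity plus the vanishing of the singular part. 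It also makes the PDE regularity in (ii) and (iii) unnecessary.
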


%\begin{rmk}
%    In \cite{HuWeiZhou2024} Hu, Wei and Zhou established a shifted version of Heintze-Karcher type inequality in hyperbolic space and as a corollary they obtained a shifted version of the classical Alexandrov theorem for smooth h-convex hypersurfaces. Noticing that $\tilde{V}_{n-1}(A,\cdot)=V_{n-1}(A,\cdot)-V_n(A,\cdot)$, we have the theorem \ref{modifiedcurvature} holds for the case $k=n-1$ by the corollary 3.4 in \cite{kohlmannstability}. Together with our results, this yields a generalized version of theorem 1.2 in \cite{HuWeiZhou2024}. 
%\end{rmk}

The framework of the proof is inspired by the  methods developed by Kohlmann in \cite{Kohlmann1998}. To accommodate the modified curvature measures we employ a modified version of the Minkowski formula together with considering the characterizations of generalized principal curvatures of h-convex domains.

\subsection{Volume preserving shifted Gauss curvature flows}

The volume preserving mean curvature flow 
\begin{align}
    \frac{\partial}{\partial t} X(x, t)=(\phi(t)-H) \nu(x, t)\label{huisken87}
\end{align}
was introduced by Huisken \cite{Huisken1987} with strictly convex initial hypersurfaces in Euclidean space. It is proved that the solution to the flow (\ref{huisken87}) exists for all time and  converges smoothly to a round sphere. Subsequently, there have been studies about mixed volume preserving curvature flows in Euclidean spaces  replacing the mean curvature by $1$-homogeneous curvature functions or $\alpha>0$ power of the $k$th mean curvature $E_k(1\leq k\leq n)$,  see \cites{Mccoy2004,Mccoy2005,Mccoy2017,CabezasSinestrari2010,Sinestrari2015,AndrewsWei2021}  for instance.  
 
%For nonhomogeneous version of flow (\ref{huisken87}), Bertini and Sinestrari \cite{Bertini2018} considered the volume preserving nonhomogeneous mean curvature flows with strictly convex initial values.

The extension of the flow \eqref{huisken87} in hyperbolic space is challenging due to the negative curvature of the ambient space.  In 2007, Cabezas-Rivas and Miquel \cite{CabezasMiquel2007} extended the volume preserving mean curvature flow \eqref{huisken87} to hyperbolic space by imposing the h-convexity on the initial hypersurface to overcome the difficulties caused by the negative curvature. Since then, many subsequent works retained the h-convexity hypothesis for initial data. For instance, Wang and Xia \cite{WangXia2014} considered the 1-homogeneous quermassintegral preserving flow with h-convex initial hypersurfaces and obtained the Alexandrov-Fenchel inequalities for smooth h-convex domains. Besides 1-homogeneous flows, there are also studies on  $\alpha$-homogeneous volume preserving flows (see \cites{Makowski2012, GuoLiWu2017, AndrewsWei2018}) and nonhomogeneous flows (see \cite{Bertini2017}), among which most require that the initial hypersurfaces are at least h-convex. 

More recently, there have been works that require weaker convexity conditions for initial hypersurfaces in hyperbolic space. Andrews, Chen and Wei \cite{AndrewsChenWei2021} considered the volume preserving flows by powers of $k$th mean curvature, requiring positive intrinsic sectional curvatures on the initial hypersurfaces; Wei, Yang and Zhou \cites{WeiYangZhou2024, WeiYangZhou2025} considered the volume preserving flows by powers of Gauss curvature with strictly convex initial hypersurfaces. 

In addition to relaxing the convexity conditions of initial hypersurfaces for flows, one can also consider speed functions with different scaling rates. Andrews, Chen and Wei \cite{AndrewsChenWei2021} introduced the modified quermassintegrals of smooth h-convex domains and  considered the modified quermassintegral preserving flows by curvature functions which are $1$-homogeneous of the shifted principal curvatures
\begin{align}
\frac{\partial}{\partial t} X(x, t) =\left(\phi(t)-F(\mathcal{W}-I) \right)\nu(x, t).\label{jems}
\end{align}
As an application, they established Alexandrov-Fenchel type inequalities of modified quermassintegrals for smooth h-convex domains. There are many other works on curvature flows in hyperbolic space involving shifted curvatures, see \cites{And94,BH17,HuLiWei2022,GaoZhou2024,Lyn22,WangWeiZhou2023,ji2025} for instance. 

In the second part of this paper, we consider the volume preserving shifted $\alpha$-Gauss curvature flow of strictly h-convex hypersurfaces in hyperbolic space. Let $X_0: M^n\rightarrow\mathbb{H}^{n+1}$ be a smooth embedding such that $M_0:=X_0(M^n)$ is a closed hypersurface with nonempty compact domain $\Omega_0$ as its interior in hyperbolic space $\mathbb{H}^{n+1}$.  We consider a smooth family of immersions $X: M^n \times[0, T) \rightarrow \mathbb{H}^{n+1}$ satisfying 
\begin{align}
\left\{\begin{aligned} 
\frac{\partial}{\partial t} X(x, t) & =\left(\phi(t)-\tilde{E}^{\frac{\alpha}{n}}_n(x,t)\right) \nu(x, t), \\ 
X(\cdot,0) & =X_0(\cdot),\end{aligned}\right.\label{flow}
\end{align}
where $\alpha>0$, $\nu(x, t)$ is the unit outer normal of the hypersurface $M_t:=X(M^n,t)=\partial \Omega_t$, $\mathcal{W}$ is the Weingarten matrix of $M_t$, $\tilde{E}_n(x,t):=E_n(\mathcal{W}-I)$ is the shifted Gauss curvature of $M_t$ and  \begin{align}
    \phi(t):=\frac{\int_{M_t}\tilde{E}^{\frac{\alpha}{n}}_n(\mathcal{W}) \mathrm{d} \mu_{g_t} }{|M_t|} \label{phi}\end{align} is the global term chosen to preserve the volume of the enclosed domain.  We prove the following convergence result of the flow \eqref{flow}. 

    \begin{thm}\label{mainthm}
   Let  $M_0$ be a smooth closed hypersurface in $\mathbb{H}^{n+1}$ enclosing a strictly h-convex compact domain $\Omega_0$. Then, for any $\alpha>0$,  the flow \eqref{flow}  has a strictly h-convex solution $M_t$ for all time $t\in [0,\infty)$, and $M_t$ converges smoothly and exponentially to a geodesic sphere that encloses the same volume as $\Omega_0$ as $t\rightarrow +\infty$.
\end{thm}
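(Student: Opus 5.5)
The proof follows the standard scheme for volume preserving flows with non-homogeneous speed, in the spirit of Wei, Yang and Zhou and of Andrews and Wei. The plan is: a priori estimates giving long-time existence, then a monotone quantity giving Hausdorff subsequential convergence, with the limit identified by Theorem \ref{modifiedcurvature}, and finally an upgrade to smooth exponential convergence.

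\textbf{Step 1: preserved h-convexity and uniform bounds.} First I will show that strict h-convexity is preserved. I apply the maximum principle to the evolution of the shifted Weingarten tensor $\mathcal{W}-I$. Equivalently, I look at the minimum of $\kappa_i-1$, or work with the horospherical support function and its shifted curvature radii. The target is a Tensor Maximum Principle argument: since $\tilde E_n^{\alpha/n}$ is a concave-type function of $\mathcal W - I$ (after taking an inverse), the reaction terms do not spoil the lower bound.

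Volume preservation, together with h-convexity, gives uniform inner and outer radius bounds, by the known comparison for h-convex domains (outer radius controlled by inner radius). Using the Tso-type support function argument centered at a ball inside $\Omega_t$, I will obtain an upper bound on $\tilde E_n$ and hence on $\phi(t)$, along with an upper bound on the principal curvatures. What is missing is a uniform positive lower bound on $\kappa_i-1$ independent of time. Without it I get existence on $[0,\infty)$ (short-time estimates can be continued via Krylov--Safonov once curvature is pinched on finite intervals), but not uniform parabolicity.

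\textbf{Step 2: monotone quantity and Hausdorff limit.} Along \eqref{flow} the volume $\tilde V_{n+1}$ is fixed. I will compute the derivative of the modified quermassintegral $\tilde V_n(\Omega_t)=|M_t|$-type quantity, more precisely the curvature integral whose variation is $\int \tilde E_{n-1}$-weighted in the modified Minkowski formulas. The aim is a quantity, presumably the modified $n$-th quermassintegral, whose time derivative is
\[
-\int_{M_t}\bigl(\tilde E_n^{\alpha/n}-\phi\bigr)\bigl(\tilde E_n - \overline{\tilde E_n}\bigr)\,d\mu\le 0 ,
\]
via a Chebyshev-type inequality; both factors are increasing in $\tilde E_n$.

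Integrating in time, along a sequence $t_i\to\infty$ this derivative tends to zero. By Blaschke selection for h-convex bodies, $\Omega_{t_i}$ converges in Hausdorff distance to some $A\in\mathcal H_{n+1}$. Passing the vanishing integral to the limit, using weak continuity of the modified curvature measures $\tilde V_r(\cdot,\beta)$ under Hausdorff convergence (inherited from Kohlmann's curvature measures, since $\tilde V_r$ is a linear combination of $V_j$), I expect to obtain that $\tilde E_n$ is essentially constant on $A$ in the measure sense: $\tilde V_0(A,\cdot)=c\,\tilde V_n(A,\cdot)$. Theorem \ref{modifiedcurvature} with $k=n$ then shows that $A$ is a geodesic ball, of the prescribed volume. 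Since the volume and the monotone functional are fixed, uniqueness of the limit follows, so the whole flow converges in Hausdorff distance.

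\textbf{Step 3: smooth convergence — the main obstacle.} The hard part is obtaining a time-uniform lower bound for $\kappa_i-1$ (equivalently, bounds on $\tilde E_n$ from below) once the flow is Hausdorff close to a ball. I will try Andrews--Wei type arguments. Hausdorff closeness to a sphere lets me write $M_t$ as a radial graph with small oscillation over a fixed center for large $t$. Then a lower bound on $\tilde E_n$ comes from a maximum principle applied to $\tilde E_n^{\alpha/n}/(u-\epsilon)$-type auxiliary functions, with $u$ a suitable shifted support function. Pinching of the shifted curvatures comes from the inverse-concavity structure.

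With uniform two-sided curvature bounds, Krylov--Safonov and Schauder estimates yield all higher derivatives, so the convergence to the ball is smooth. Exponential convergence then follows by linearizing \eqref{flow} at the sphere. The linearized operator restricted to volume-preserving perturbations has a spectral gap after the translation modes are accounted for (here h-convexity plus the monotone functional pins the center), and this gap gives the exponential rate.
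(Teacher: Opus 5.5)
Your Steps~2--3 follow the same architecture as the paper. Step~1, however, has a genuine gap exactly where long-time existence is decided: the $T$-dependent positive lower bound on $\kappa_i-1$.

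\textbf{The lower bound on $\kappa_i-1$.} Inverse concavity of $\tilde E_n^{1/n}$ only disposes of the gradient terms, and your claim that the reaction terms do not spoil the lower bound is not right. If $b$ is the largest eigenvalue of $(\mathcal W-I)^{-1}$, the evolution of $\log b$ contains $+2\phi+\phi/b$ from the global term, so a maximum principle needs an a priori upper bound on $\phi$. For $\alpha\le n$, Jensen's inequality together with $\int_{M_t}\tilde E_n=\omega_n-n\tilde W_n(\Omega_t)\le\omega_n$ would supply one. For $\alpha>n$, nothing available controls $\phi$ short of an upper bound on $\tilde E_n$, and your Tso step cannot come first. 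In the shifted setting $\partial_t u$ contains $(u-\cosh d_{p_0})\dot\Psi^{kl}\delta_{kl}$ with $\dot\Psi^{kl}\delta_{kl}=\frac{\alpha}{n}\Psi\,\mathrm{tr}(\mathcal W-I)^{-1}$. Hence the evolution of $Q=\Psi/(u-c_3)$ acquires the positive term $\frac{\alpha}{n}(\cosh d_{p_0}-u)\,\mathrm{tr}(\mathcal W-I)^{-1}Q^2$. The paper absorbs this term only through $\mathrm{tr}(\mathcal W-I)^{-1}\le nC_1(T)$, that is, after the lower bound is known. As written, your estimates are therefore circular.

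The missing idea is the coupled test function $P=\log b+A\cosh d_{p_0}-Bu$ of Theorem~\ref{thmlower}. It is used on intervals $[t_0,t_0+\tau)$ on which $B(p_0,\rho_-(\Omega_{t_0})/2)\subset\Omega_t$ (Lemma~\ref{constrainedball}), so that $u>2c_3$ there. For suitable $A>B+1$, the $\phi$-terms combine into $\phi\,(2+b^{-1}+Au-B\cosh d_{p_0})<0$. Once $b$ is large, the $\dot\Psi^{kl}\delta_{kl}$-terms combine into $\dot\Psi^{kl}\delta_{kl}\bigl(b^{-1}+(A-B)(u-\cosh d_{p_0})\bigr)<0$. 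This gives the lower bound with no prior control of $\phi$ or $\tilde E_n$. Iterating over $[T/\tau]+1$ intervals covers $[0,T)$, and only then does Tso's argument (Theorem~\ref{thmupper}) apply.

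\textbf{The center of the limit ball.} In Step~2 you claim that, because the volume and the monotone functional are fixed, the limit is unique and the whole flow converges in Hausdorff distance. Both quantities are invariant under isometries of $\mathbb{H}^{n+1}$. They determine only the radius $\rho_\infty$ of the limit ball, not its center, so different subsequences could a priori converge to different balls. Your Step~3 (``radial graph over a fixed center'', ``the monotone functional pins the center'') rests on this unproved point. The paper treats it separately, proving convergence of the centers of inner balls as in Section~6 of Wei--Yang--Zhou, and that is what yields time-independent curvature bounds. Also, a Tso-type quotient $\tilde E_n^{\alpha/n}/(u-\epsilon)$ gives an upper bound on $\tilde E_n$, not a lower one.

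\textbf{Smaller omissions.} Blaschke selection needs $\{\Omega_{t_i}\}$ to stay in a bounded region, which radius bounds alone do not give; the paper uses Alexandrov reflection for this. You also cannot pass the nonlinear integrand $(\tilde E_n^{\alpha/n}-\overline{\tilde E_n}^{\alpha/n})(\tilde E_n-\overline{\tilde E_n})$ to the limit directly. You first need $L^1$-decay of $\tilde E_n-\overline{\tilde E_n}$. Both this and the requirement $c>0$ in Theorem~\ref{modifiedcurvature} use the two-sided bounds on $\overline{\tilde E_n}$ of Lemma~\ref{averofG}. These come from $\int_{M_t}\tilde E_n=\omega_n-n\tilde W_n(\Omega_t)$ and $\tilde W_n(B_r)=\frac{\omega_n}{n}(1-e^{-nr})$.
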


%To the best of our knowledge, this is the first result on global constrained shifted curvature flows which are not homogeneous of degree one. 
For the long time existence of \eqref{flow},  we prove the lower bound estimate of shifted curvatures by analyzing the auxiliary function  $$P(p,t)=\log{b}+ A\cosh d_{p_0}-Bu,$$
where $b$ is the largest reciprocal of the shifted principal curvatures $\kappa_i-1, i=1,\dots,n$, $d_{p_0}$ and $u$ are respectively the radial function and support function of $M_t$ when $M_t$ is star shaped with respect to some point $p_0$. 
Then we adapt Tso's technique \cite{Tso} to obtain an upper bound on shifted Gauss curvature, which implies the long time existence by classical arguments.

 The convergence of $1$-homogeneous shifted curvature flows \eqref{jems} in \cite{AndrewsChenWei2021} is based on establishing curvature pinching estimate along the flow. When the degree of the homogeneity is any $\alpha\neq 1$, the method to obtain the curvature pinching estimates does not work. In our case, to overcome this obstacle we adapt the technique of Wei, Yang and Zhou \cites{WeiYangZhou2024, WeiYangZhou2025} to characterize the Hausdorff limit shape of the flow \eqref{flow}. The theory of modified curvature measures established in Theorem \ref{modifiedcurvature} will be a key ingredient. 
 
 %Hence we need to establish a modified version of the theorem \ref{kohlmann98thm}.  Moreover, in order to obtain the subsequential convergence of curvature measures, we need to obtain a uniform lower bound for the integral average of shifted Gauss curvature of solutions. Thus we derive the relations between modified quermassintegrals and modified curvature integrals.

\subsection{Organization of this paper} 
This paper is organized as follows: 
In section \ref{preli} we recall some preliminaries including the quermassintegrals in hyperbolic space and the modified quermassintegrals of h-convex domains introduced by Andrews, Chen and Wei \cite{AndrewsChenWei2021}. Moreover we recall the preliminaries about the geometry of h-convex domains in hyperbolic space, the generalized principal curvatures introduced by Kohlmann \cite{Kohlmann1991} and the evolution equations of curvature flows. In section \ref{rigidity}, we consider the convergence behavior of compact h-convex domains in the Hausdorff metric and obtain the characterization of geodesic balls via modified curvature measures for compact h-convex domains.% by adapting technique of Kohlmann \cite{Kohlmann1998} 
Moreover, we recall some relations between modified quermassintegrals and modified curvature measures. 
In section \ref{SGCF}, we give the long time existence of solutions and convergence of the flow (\ref{flow}).
In subsection \ref{prio}, we derive a priori estimates of the flow depending on $T$ on finite time interval $[0,T)$ and obtain the long time existence of the flow \eqref{flow}.
In subsection \ref{convergenceofflow}, by the expression of $n$-th shifted curvature measures of smooth h-convex domains and the uniform estimates of inner radius and outer radius we can obtain the uniform lower bound on the average of shifted Gauss curvature. Then we obtain the smooth and exponential convergence of flow \eqref{flow} by the convergence and rigidity  result we derived in section \ref{rigidity}. 

\section{Preliminaries}\label{preli}
In this section, we recall some preliminary results concerning evolution equations of curvature flows, hyperbolic geometry and curvature measure theory. 

We first introduce some notation.  For any smooth closed hypersurface $M$ which is the boundary of a bounded domain of $\mathbb{H}^{n+1}$, we denote by $g=\{g_{ij}\}$, $\mathcal{W}=\{h^j_i\}$ ,$\nabla$, $\overline{\nabla}$ the induced metric and Weingarten matrix, induced Levi-Civita connection of $M$ and  Levi-Civita connection of $\mathbb{H}^{n+1}$ respectively. We denote by $B(x,r)$ the geodesic ball centered at $x$ of radius $r$ in $\mathbb{H}^{n+1}$.
In the arguments below, we symmetrize the indices of every tensor appearing for convenience. For example,  $h^j_i$ is taken to mean its symmetrized form $\frac{1}{2}(h^j_i + h^i_j)$. Along the flow (\ref{flow}) we denote by $\Omega_t$ the interior enclosed by $M_t$. For $p\in\mathbb{H}^{n+1}$, let $d_p(x)=d_{\mathbb{H}^{n+1}}(p,x)$. 

We denote by $\mathcal{K}(\mathbb{H}^{n+1})(\mathcal{K}(\mathbb{R}^{n+1}))$  the collection of nonempty, compact, convex subsets in $\mathbb{H}^{n+1}(\mathbb{R}^{n+1})$. And denote by $\mathcal{H}_{n+1}(\mathbb{H}^{n+1})$ the collection of nonempty, compact, h-convex domains in $\mathbb{H}^{n+1}$.

In the following we equip the tangent bundle $T\mathbb{H}^{n+1}$ with the Sasaki metric with respect to the metric on $\mathbb{H}^{n+1}$ denoted by $\langle \cdot, \cdot\rangle_{\mathbb{H}^{n+1}}$. Denote by $\Pi :T\mathbb{H}^{n+1}\rightarrow \mathbb{H}^{n+1}$ the canonical projection and by $\Gamma:TT\mathbb{H}^{n+1}\rightarrow T\mathbb{H}^{n+1}$ the connection map. For any $w\in T\mathbb{H}^{n+1}$ we identify $T_wT\mathbb{H}^{n+1}$ with $T_{\Pi(w)}\mathbb{H}^{n+1}\times T_{\Pi(w)}\mathbb{H}^{n+1}$ by the isometry $\Pi_*\times\Gamma$.

\subsection{Quermassintegrals in hyperbolic space}
For any $A$ in $\mathcal{K}_{n+1}(\mathbb{H}^{n+1})$, the quermassintegrals of $A$ are defined as (see \cites{integralgeometrybook, integralgauss}) 
$$W_k(A)=\frac{\omega_{k-1} \cdots \omega_0}{\omega_{n-1} \cdots \omega_{n-k}} \int_{\mathcal{L}_k} \chi\left(L_k \cap A\right) d L_k, \quad k=1, \ldots, n,$$
where $\mathcal{L}_k$ is the space of $k$-dimensional totally geodesic subspaces in $\mathbb{H}^{n+1}$, $\omega_n$ denotes the area of the $n$-dimensional unit sphere in Euclidean space, and $\chi$ is the Euler characteristic. By the convexity of $A$, we know that $\chi$ is defined to be $1$ if $\mathcal{L}_k \cap A \neq \emptyset$ and to be $0$ otherwise. Furthermore, set $$W_0(A)=|A|, \quad W_{n+1}(A)=\left|\mathbb{B}^{n+1}\right|=\frac{\omega_n}{n+1} .$$ When $A$ is smooth, the quermassintegrals and the curvature integrals of $A$ are related by the following equations (see \cite{integralgauss}): \begin{align}
\binom{n}{n-k}^{-1}V_{n-k}(A) & =(n-k) W_{k+1}(A)+k W_{k-1}(A), \quad k=1, \ldots, n-1, \nonumber\\
V_n(A) & =n W_1(A)=|\partial A|\nonumber \\
V_0(A) & =\omega_n+n W_{n-1}(A) .\label{QandC}
\end{align}
Wang and Xia \cite{WangXia2014} obtained the Alexandrov-Fenchel inequality for bounded smooth h-convex domains  in hyperbolic space which can be expressed as 
\begin{align}
 W_k(A) \geq f_k \circ f_{\ell}^{-1}\left(W_{\ell}(A)\right)   
\end{align}
for any $0\leq \ell<k\leq n$. Here $f_k:[0, \infty) \rightarrow \mathbb{R}_{+}$ is a monotone
function defined by $f_k(r)=W_k\left(B_r\right)$, and equality holds if and only if $A$ is a geodesic ball.

The quermassintegrals are monotone with respect to inclusion of convex bodies. 
\begin{thm}\label{thmmonoofquer}
    Suppose that $A, B\in\mathcal{K}_{n+1}(\mathbb{H}^{n+1})$. If $A\subseteq B$,     then ${W}_k\left(A\right) \leq {W}_k\left(B\right)$. 
\end{thm}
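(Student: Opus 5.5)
Since $W_0(A)=|A|$ and $W_{n+1}(A)$ is a constant, only $k=1,\dots,n$ need an argument, where $W_k$ is given by the integral-geometric formula
$$W_k(A)=\frac{\omega_{k-1}\cdots\omega_0}{\omega_{n-1}\cdots\omega_{n-k}}\int_{\mathcal{L}_k}\chi(L_k\cap A)\,dL_k .$$
The plan is to compare integrands pointwise on $\mathcal{L}_k$. For the two trivial cases: if $A\subseteq B$, then $W_0(A)=\mathcal{L}^{n+1}(A)\le\mathcal{L}^{n+1}(B)=W_0(B)$ by monotonicity of Lebesgue (Riemannian volume) measure. Also $W_{n+1}(A)=W_{n+1}(B)=\omega_n/(n+1)$, so equality holds there.

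For $1\le k\le n$ we use convexity, as recalled after the definition. For a convex set $A$ and any totally geodesic $L_k$, the intersection $L_k\cap A$ is either empty or a nonempty compact convex set, hence contractible. Therefore $\chi(L_k\cap A)=\mathbf{1}_{\{L_k\cap A\neq\emptyset\}}$. If $A\subseteq B$ and $L_k\cap A\neq\emptyset$, then $L_k\cap B\supseteq L_k\cap A\neq\emptyset$. This gives the pointwise inequality
$$\chi(L_k\cap A)\le \chi(L_k\cap B)\quad\text{for every } L_k\in\mathcal{L}_k .$$
Both integrands are nonnegative measurable indicator functions on $\mathcal{L}_k$, equipped with its invariant (Haar-type) measure $dL_k$. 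The normalizing constant is positive, so integrating gives $W_k(A)\le W_k(B)$.

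The only point needing care is measurability and finiteness of $L_k\mapsto\chi(L_k\cap A)$. The set $\{L_k: L_k\cap A\neq\emptyset\}$ is closed in $\mathcal{L}_k$ when $A$ is compact, since the limit of $k$-planes meeting a compact set still meets it. It is also relatively compact, since every such plane passes within bounded distance of a fixed point. Hence the integral is well defined and finite, and the comparison above is legitimate. I expect no real obstacle here. The argument is essentially the statement that a convex set's hitting set is monotone under inclusion.
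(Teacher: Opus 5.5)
Your proof is correct, and it is the argument the paper has in mind. The paper states this monotonicity without proof, treating it as immediate from the integral-geometric definition of $W_k$ together with its remark that, by convexity, $\chi(L_k\cap A)$ is the indicator of $\{L_k\cap A\neq\emptyset\}$; your pointwise comparison of indicators, with the trivial cases $k=0$ and $k=n+1$ and the measurability check, is exactly that argument.
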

Andrews, Chen and Wei \cite{AndrewsChenWei2021} introduced modified quermassintegrals of bounded smooth h-convex domains. More precisely, suppose that $A\in \mathcal{H}_{n+1}(\mathbb{H}^{n+1})$ is a smooth h-convex domain and denote by 
\begin{align}\tilde{W}_k(A):=\sum_{i=0}^k(-1)^{k-i}\binom{k}{i} W_i(A)\label{Quermassine}\end{align} the modified quermassintegrals of $A$. They established the monotonicity of the modified quermassintegrals as follows. 
\begin{thm}[\cite{AndrewsChenWei2021}]\label{mono}
    If $A$ and $B$ are h-convex domains with $A\subset B$, then $\tilde{W}_k\left(A\right) \leq \tilde{W}_k\left(B\right)$.
\end{thm}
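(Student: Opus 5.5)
The plan is to connect $A$ to $B$ by a monotone family of h-convex domains and to differentiate $\tilde W_k$ along this family. I will first assume that $A$ and $B$ are smooth and strictly h-convex, and treat general h-convex domains by approximation at the end.

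\emph{Variational formula.} For a smooth family $\Omega_t$ whose boundaries move with outward normal speed $f$, the classical variational formula for the hyperbolic quermassintegrals gives
\begin{align*}
\frac{d}{dt}W_i(\Omega_t)=c_{n,i}\int_{\partial\Omega_t}E_i(\kappa)\,f\,d\mu ,
\end{align*}
where the constant $c_{n,i}$ is fixed by the normalization of the $W_i$ in \eqref{Quermassine}. I will check that with the paper's normalization the constants align, so that the binomial combination in \eqref{Quermassine} matches the identity
\begin{align*}
E_k(\kappa-1)=\sum_{i=0}^k(-1)^{k-i}\binom{k}{i}E_i(\kappa).
\end{align*}
This identity follows from expanding $\prod(\kappa_{i_j}-1)$ and counting multiplicities. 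Combining the two displays yields
\begin{align*}
\frac{d}{dt}\tilde W_k(\Omega_t)=c\int_{\partial\Omega_t}\tilde E_k\,f\,d\mu ,\qquad c>0 .
\end{align*}
For h-convex $\Omega_t$ we have $\kappa_i\ge1$, hence $\tilde E_k\ge0$. Therefore $\tilde W_k$ is nondecreasing along any h-convex family with $f\ge0$.

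\emph{Monotone h-convex path.} It remains to build a smooth family of h-convex domains from $A$ to $B$ that is nondecreasing under inclusion. I will parametrize h-convex domains by their horospherical support functions $u$ on $\mathbb{S}^n$, following \cite{Espinar09,Li-Xu}. In this parametrization, $A\subset B$ is equivalent to $u_A\le u_B$. The normal speed of a family has the same sign as $\partial_t u$, up to a positive factor. I will then take the interpolation $u_t$ in the variable in which the h-convexity condition becomes a convex cone condition. The natural candidate is $\varphi=e^{u}$, for which h-convexity is expressed as positivity of a matrix $A[\varphi]$; alternatively one can use a horospherical Minkowski combination. With the right variable, the linear interpolation $\varphi_t=(1-t)\varphi_A+t\varphi_B$ stays h-convex and is monotone in $t$. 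This gives $f\ge0$ and hence $\tilde W_k(A)\le\tilde W_k(B)$.

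\textbf{Main obstacle.} I expect the hardest step to be verifying that the chosen interpolation preserves (strict) h-convexity. The condition $A[\varphi]>0$ is not linear in $u$, so the right variable must make it concave or convex. If no such variable works directly, I would fall back on Minkowski-type sums in the horospherical sense, or on the h-convex hull of $A\cup B_t$ for an expanding family $B_t$, which is automatically monotone but requires a smoothing argument.

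\emph{General case.} Finally, I extend the inequality to general h-convex $A\subset B$. I will approximate $A$ and $B$ in the Hausdorff metric by smooth strictly h-convex domains $A_j\subset B_j$, obtained from slightly shrunk and enlarged parallel-type regularizations. The inequality then passes to the limit by continuity of the $W_i$ under Hausdorff convergence.
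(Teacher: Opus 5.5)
The paper gives no proof of this statement; it is quoted from \cite{AndrewsChenWei2021}, so your argument has to stand on its own. Its architecture is sound. The variation formula $\frac{d}{dt}\tilde W_k(\Omega_t)=\int_{\partial\Omega_t}\tilde E_k\,f\,d\mu$ is \eqref{evolutionquer} with a general normal speed $f$, in the paper's normalization, so no constants remain to check. Also $\tilde E_k\ge 0$ on h-convex boundaries. What remains is a smooth, inclusion-increasing family of h-convex domains from $A$ to $B$. That step, which you flag as the main obstacle and then hedge with fallbacks, is the whole content of the theorem, and the proposal never establishes it. That is the gap.

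It closes with your first candidate; the missing idea is concavity. Let $u$ be the horospherical support function and $\varphi=e^u$. In the hyperboloid model $\varphi(x)=\sup_{X\in\Omega}\bigl(-\langle X,(1,x)\rangle\bigr)$, so $A\subset B$ gives $\varphi_A\le\varphi_B$ immediately. A smooth positive $\varphi$ on $\mathbb{S}^n$ is the support function of a smooth strictly h-convex domain if and only if
\[
\mathcal{A}[\varphi]=\nabla^2\varphi-\frac{|\nabla\varphi|^2}{2\varphi}\,\sigma+\frac12\Bigl(\varphi-\frac{1}{\varphi}\Bigr)\sigma>0 .
\]
Here $\sigma$ is the round metric, and the eigenvalues of $\varphi\,\mathcal{A}[\varphi]$ are the shifted principal radii $(\kappa_i-1)^{-1}$ (see \cite{Espinar09}).

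For each unit vector $\xi$, the map $\varphi\mapsto\mathcal{A}[\varphi](\xi,\xi)$ is concave on $\{\varphi>0\}$. Indeed, the Hessian term and $\varphi/2$ are linear, while $(s,p)\mapsto|p|^2/s$ and $s\mapsto 1/s$ are convex for $s>0$. Hence $\varphi_t=(1-t)\varphi_A+t\varphi_B$ satisfies
\[
\mathcal{A}[\varphi_t]\ge(1-t)\,\mathcal{A}[\varphi_A]+t\,\mathcal{A}[\varphi_B]>0 .
\]
So $\varphi_t$ defines a smooth family $\Omega_t$ of smooth strictly h-convex domains with $\Omega_0=A$ and $\Omega_1=B$. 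Parametrize $\partial\Omega_t$ by the horospherical Gauss map. The relations $X-\nu=(1,x)/\varphi_t$ and $-\langle X,(1,x)\rangle=\varphi_t$ give the normal speed exactly: $\langle\partial_tX,\nu\rangle=\partial_t\varphi_t/\varphi_t=(\varphi_B-\varphi_A)/\varphi_t\ge 0$. Integrating the variation formula over $t\in[0,1]$ yields $\tilde W_k(A)\le\tilde W_k(B)$.

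Two remarks on your alternatives. In the variable $u$ the condition contains the indefinite term $du\otimes du-\frac12|du|^2\sigma$, so interpolating $u$ linearly has no reason to preserve h-convexity; $\varphi$, not $u$, is the right variable. The hull-type fallbacks produce nonsmooth domains, where the smooth variation formula no longer applies.

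Your reduction of the nonsmooth case is also only sketched. Parallel bodies of an h-convex set are in general only $C^{1,1}$, and they keep $\kappa_i=1$ wherever the original has it, so they are neither smooth nor strictly h-convex. A genuine smoothing that preserves $A_j\subset B_j$ is needed. The paper does not need this case, though: the theorem is used only in Lemma \ref{averofG}, for $\Omega_t\subset B_{c_2}$, and both of these are smooth and strictly h-convex.
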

Moreover, they obtained the Alexandrov-Fenchel type inequality for modified quermassintegrals of smooth h-convex domains.
\begin{thm}[\cite{AndrewsChenWei2021}]\label{modifiedalexandrov}
For any  smooth, closed and strictly h-convex hypersurface $M=\partial \Omega$ in $\mathbb{H}^{n+1}$ and $0\leq l<k \leq n$ we have
$$\tilde{W}_k(\Omega) \geq \tilde{f}_k \circ \tilde{f}_l^{-1}\left(\tilde{W}_l(\Omega)\right).
$$
Equality holds if and only if $\Omega$ is a geodesic ball. Here the function $\tilde{f}_k: [0,\infty) \rightarrow \mathbb{R}_{+}$ is defined by $\tilde{f}_k(r)=\tilde{W}_k(B(r))$.
\end{thm}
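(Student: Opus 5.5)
The plan is to prove Theorem \ref{modifiedalexandrov} by a constrained curvature flow adapted to the shifted Weingarten matrix $\mathcal{W}-I$. Fix $0\le l<k\le n$ and run a flow of the form \eqref{jems},
$$\frac{\partial}{\partial t}X=\left(\phi(t)-F(\mathcal{W}-I)\right)\nu,\qquad F=\frac{\tilde E_{l+1}}{\tilde E_l},\qquad \phi(t)=\frac{\int_{M_t}\tilde E_{l+1}\,d\mu}{\int_{M_t}\tilde E_l\,d\mu},$$
where $\tilde E_j=E_j(\mathcal{W}-I)$. The first step is to compute the first variation of $\tilde W_j$. For a normal variation with speed $f$, the classical formula gives $\frac{d}{dt}W_j=c_j\int_M E_j\,f\,d\mu$. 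Expanding $E_j(\kappa)=E_j((\kappa-1)+1)$ binomially in the $\tilde E_i$, the alternating combination \eqref{Quermassine} should telescope to
$$\frac{d}{dt}\tilde W_j=\tilde c_j\int_M \tilde E_j\,f\,d\mu,\qquad \tilde c_j>0.$$
I would check this first on geodesic spheres, where everything is explicit. With this formula the choice of $\phi$ makes $\tilde W_l$ constant along the flow.

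Next I would show that $\tilde W_k$ is monotone non-increasing. Up to a positive constant,
$$\frac{d}{dt}\tilde W_k\propto \int\tilde E_k\,\phi-\int\frac{\tilde E_k\tilde E_{l+1}}{\tilde E_l}.$$
The Newton--Maclaurin inequalities for the shifted curvatures (valid since $\kappa_i-1>0$) give $\tilde E_k\tilde E_{l+1}\ge \tilde E_{k+1}\tilde E_l$ and related ratio inequalities. To compare $\phi\int\tilde E_k$ with $\int\tilde E_{k}\tilde E_{l+1}/\tilde E_l$, I would use a shifted Minkowski formula. This follows from the hyperbolic Minkowski identities $\int \cosh r\,E_{j-1}=\int u\,E_j$, rewritten with $\tilde E_j$ and the weight $\cosh r-u$, which is positive for h-convex hypersurfaces. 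The resulting weighted identity $\int(\cosh r-u)\tilde E_{j}=\int u\,\tilde E_{j+1}$ (up to constants), combined with Newton--Maclaurin, should give $\frac{d}{dt}\tilde W_k\le0$. Equality forces all shifted principal curvatures to be equal at every point, so the hypersurface is umbilic and hence a geodesic sphere.

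The analytic part is long-time existence and smooth convergence to a geodesic sphere, and I expect it to be the main obstacle. Preservation of strict h-convexity requires a tensor maximum principle applied to $h_i^j-\delta_i^j$. The reaction terms in the evolution of the shifted Weingarten tensor must have a good sign once the ambient curvature $-1$ is absorbed by the shift; I would first check this using the concavity and inverse-concavity of $\tilde E_{l+1}/\tilde E_l$. After that, the steps in order are:
\begin{enumerate}
\item bound the inner and outer radii using that $\tilde W_l$ is fixed and that $\tilde W_k$ decreases;
\item obtain two-sided speed bounds from the support function $u$, and curvature bounds from a Tso-type auxiliary function;
\item get higher regularity from Krylov--Safonov and Schauder theory;
\item show pinching of $\kappa_i-1$ decays via the maximum principle, giving convergence to a sphere.
\end{enumerate}
Finally, since $\tilde W_k$ decreases to $\tilde W_k(B(r_\infty))$ with $\tilde f_l(r_\infty)=\tilde W_l(\Omega)$, we obtain $\tilde W_k(\Omega)\ge \tilde f_k\circ\tilde f_l^{-1}(\tilde W_l(\Omega))$. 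Equality forces $\frac{d}{dt}\tilde W_k\equiv0$ at $t=0$, so $\Omega$ is a ball.
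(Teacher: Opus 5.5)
The paper gives no proof of this statement; it cites \cite{AndrewsChenWei2021}, where the inequality comes from the $\tilde W_l$-preserving flow \eqref{jems}. So your overall strategy is the right one. The first variation $\frac{d}{dt}\tilde W_j=\int\tilde E_j f\,d\mu$ holds with constant exactly $1$; this is Lemma \ref{quermass}. The speed $\tilde E_{l+1}/\tilde E_l$ is concave and inverse concave in $\mathcal{W}-I$, so your analytic steps can simply be quoted from \cite{AndrewsChenWei2021}.

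\textbf{The gap: monotonicity of $\tilde W_k$ for $k\ge l+2$.} Put $d\nu=\tilde E_l\,d\mu$, $F=\tilde E_{l+1}/\tilde E_l$ and $G=\tilde E_k/\tilde E_l$. Your flow gives
\[
\frac{d}{dt}\tilde W_k=\frac{1}{\nu(M_t)}\Big(\int F\,d\nu\int G\,d\nu-\nu(M_t)\int FG\,d\nu\Big),
\]
so you need $F$ and $G$ to have nonnegative covariance with respect to $\nu$.

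Newton--Maclaurin does not give this. It yields only the pointwise upper bound $G\le F^{k-l}$, and $F$ and $G$ are not similarly ordered. Compare an umbilic point $\lambda=(a,\dots,a)$, where $F=a$ and $G=a^{k-l}$, with a point where exactly $l+1$ shifted curvatures equal $1$ and the rest equal $\varepsilon$. At the second point $F$ stays bounded below while $G=O(\varepsilon)$.

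The shifted Minkowski identities $\int(\cosh r-u)\tilde E_j=\int u\,\tilde E_{j+1}$ do not help either. They carry the non-constant weights $u$ and $\cosh r-u$, and do not compare $\int F\,d\nu\int G\,d\nu$ with $\nu(M)\int FG\,d\nu$. So the claim that $\frac{d}{dt}\tilde W_k\le0$ ``should'' follow is unsupported. Only $k=l+1$ works: there $G=F$, and Cauchy--Schwarz gives $(\int\tilde E_{l+1})^2\le\int\tilde E_l\int\tilde E_{l+1}^2/\tilde E_l$.

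\textbf{Two standard repairs.}
\begin{enumerate}
\item[(a)] Prove only the consecutive inequalities $\tilde W_{j+1}\ge\tilde f_{j+1}\circ\tilde f_j^{-1}(\tilde W_j)$ with your flow, and chain them. Each $\tilde f_j$ is strictly increasing, since $\tilde f_j'(r)=|\partial B_r|(\coth r-1)^j>0$. Hence every $\tilde f_{j+1}\circ\tilde f_j^{-1}$ is increasing, and the compositions telescope to $\tilde f_k\circ\tilde f_l^{-1}$.
\item[(b)] Run the flow with $F=(\tilde E_k/\tilde E_l)^{1/(k-l)}$, which is still concave and inverse concave. Then $\tilde E_k=\tilde E_lF^{k-l}$ and $\phi=\bar F$ (the $\nu$-average of $F$), so
\[
\frac{d}{dt}\tilde W_k=-\int\big(F^{k-l}-\bar F^{k-l}\big)\big(F-\bar F\big)\,d\nu\le0 .
\]
\end{enumerate}

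\textbf{The equality case.} Equality in Cauchy--Schwarz, or in the last display, gives only $F\equiv\mathrm{const}$ on $M_t$, not umbilicity as you assert. Close it as follows. If equality holds for $\Omega$, then $\tilde W_k(\Omega_t)$ is non-increasing and tends to $\tilde f_k\circ\tilde f_l^{-1}(\tilde W_l(\Omega))=\tilde W_k(\Omega)$, so it is constant in $t$. Hence $F\equiv\phi$ on every $M_t$ and the flow is stationary. Therefore $M_0=\lim_{t\to\infty}M_t$ is a geodesic sphere.
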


\subsection{Hyperbolic geometry}
%As is mentioned in \cite{AndrewsChenWei2021}, a domain in hyperbolic space $\mathbb{H}^{n+1}$ is called h-convex if it can be expressed as the intersection of a family of horoballs.  
For any $A\in\mathcal{H}_{n+1}(\mathbb{H}^{n+1})$, the inner radius $\rho_{-}(A)$ and the outer radius $\rho_{+}(A)$ of $A$ are defined by $$\rho_{-}(A)=\sup \left\{\rho: B_\rho(p) \subset A \text { for some } p \in \mathbb{H}^{n+1}\right\}$$ and $$\rho_{+}(A)=\inf \left\{\rho: A \subset B_\rho(p) \text { for some } p \in \mathbb{H}^{n+1}\right\}.$$

It is known that for any $A\in\mathcal{H}_{n+1}(\mathbb{H}^{n+1})$ the inner radius $\rho_{-}(A)$ and the outer radius $\rho_{+}(A)$ can be compared as follows:

\begin{thm}(\cites{BorisenkoGallegoReventos2001, BorisenkoMiquel1999,BorisenkoMiquel2002,Makowski2012,CabezasMiquel2007})\label{hconvex}
    For any $A\in\mathcal{H}_{n+1}(\mathbb{H}^{n+1})$, let $o$ be the center of an inner ball of $A$. We have

    (1) The maximum of distance from $o$ to $\partial A$ is controlled by $\rho_-(A)$
    \begin{align}
        \max_{p \in \partial A} d_{\mathbb{H}^{n+1}}(o, p) \leq \rho_{-}(A)+\ln \frac{\left(1+\sqrt{\tanh \rho_{-}(A) /2}\right)^2}{1+\tanh \rho_{-}(A) /2}<\rho_{-}(A)+\ln2.\label{innerandouter}
    \end{align}
    Moreover, there exists a constant $c>0$ such that $$\rho_{+}(A) \leq c\left(\rho_{-}(A)+\rho_{-}^{1/2}(A)\right)$$.

    (2)  For any $p\in \mathring{A}$, we have \begin{align}
        \langle \partial d_{p},\nu\rangle_{\mathbb{H}^{n+1}} \geq \tanh \left(\mathrm{d}_{\mathbb{H}}(p, \partial A)\right), \text{for any $q\in\partial A$}.\label{support}
    \end{align}
\end{thm}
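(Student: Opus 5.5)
We would prove the two parts separately. Both rest on one elementary tool: the Busemann function of a horosphere, together with the fact that $A$ lies in a supporting horoball at every boundary point. Fix $q\in\partial A$ and an outward unit normal $\nu$ at $q$. Since $A$ is h-convex, $A$ is contained in the closed horoball $\mathcal{B}$ whose boundary horosphere passes through $q$ with outward normal $\nu$. Let $b$ be the Busemann function of $\mathcal B$, normalized so that $b(q)=0$ and $b<0$ in the interior. For a point $p$ with $d=d_{\mathbb{H}^{n+1}}(p,q)$ we use the standard formula
\begin{align*}
e^{b(p)}=\cosh d-\sinh d\,\langle \partial d_p,\nu\rangle_{\mathbb{H}^{n+1}}(q).
\end{align*}
This can be checked in the hyperboloid model. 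Here $-b(p)$ is the distance from an interior point $p$ to $\partial\mathcal B$.

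\emph{Part (2).} Let $p\in\mathring A$ and $r=d_{\mathbb{H}}(p,\partial A)$. Then $B(p,r)\subset A\subset\mathcal B$, so $-b(p)\ge r$. Writing $c=\langle\partial d_p,\nu\rangle$, this gives $\cosh d - c\sinh d\le e^{-r}$, that is,
\begin{align*}
c\ge f(d):=\frac{\cosh d-e^{-r}}{\sinh d},\qquad d\ge r .
\end{align*}
We minimize $f$ over $d\ge r$. One has $f(r)=1$ and $f(d)\to1$ as $d\to\infty$. The critical point is given by $e^{-r}\cosh d=1$, and there $f=\sqrt{1-e^{-2r}}$. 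Set $x=1-e^{-2r}\in(0,1)$. Then $\tanh r=x/(2-x)\le\sqrt x$, because $\sqrt x\le 1\le 2-x$. Hence $c\ge\tanh r$, which is \eqref{support}.

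\emph{Part (1).} Let $\rho=\rho_-(A)$ and let $o$ be the center of an inner ball $B(o,\rho)$. First, by maximality of the inner ball, the set of touching points $\{z_i\}=\partial B(o,\rho)\cap\partial A$ is not contained in any open hemisphere of directions at $o$. Otherwise $o$ could be moved slightly to enlarge the ball, which is the standard compactness argument. Second, at each $z_i$ the only supporting horoball of $A$ is the horoball $\mathcal B_i$ internally tangent to $B(o,\rho)$ at $z_i$, since its normal must coincide with that of the inner ball. Therefore $A\subset\bigcap_i\mathcal B_i$.

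Now take $q\in\partial A$ at maximal distance $D$ from $o$. The direction of $q$ at $o$ makes an angle $\le\pi/2$ with the direction of some $z_i$. It then remains to compute
\begin{align*}
\sup\bigl\{d(o,y):\ y\in\mathcal B_i,\ \angle_o(y,z_i)\le\pi/2\bigr\}.
\end{align*}
Since the horoball is rotationally symmetric about the geodesic $oz_i$, this reduces to a two-dimensional computation, for instance in the upper half-plane with the ideal point at $\infty$. The supremum is attained at angle $\pi/2$ and equals the explicit value $\rho+\ln\frac{(1+\sqrt{\tanh(\rho/2)})^2}{1+\tanh(\rho/2)}$. The logarithmic term is $<\ln 2$ because $\tanh(\rho/2)<1$.

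For the bound on $\rho_+$, note that $\rho_+\le D$. When $\rho$ is small, the logarithmic term is $O(\sqrt{\tanh(\rho/2)})=O(\rho^{1/2})$. When $\rho$ is large, it is bounded by $\ln2\le C\rho$. Together these give $\rho_+\le c(\rho+\rho^{1/2})$.

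The main obstacle is the explicit extremal computation in part (1): one has to identify correctly the farthest point of a tangent horoball inside a half-space of directions, and verify that the extremum occurs at angle $\pi/2$, where the upper half-plane model gives the closed form. The hemisphere property of the touching points is standard and only needs care in stating the perturbation argument.
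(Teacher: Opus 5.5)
Your proof is correct. There is no internal proof to compare it with: the paper states this result purely by citation to Borisenko--Gallego--Revent\'os, Borisenko--Miquel, Makowski and Cabezas-Rivas--Miquel and never argues it. What you give is a self-contained proof, and the details check out.

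\emph{Part (2).} The Busemann identity $e^{b(p)}=\cosh d-\sinh d\,\langle\partial d_p,\nu\rangle$ is right; in the hyperboloid model, take $b(x)=\ln(-\langle x,\ell\rangle)$ with the null vector $\ell=q-\nu$. Minimizing $f$ over $d\ge r$ gives the bound $\langle\partial d_p,\nu\rangle\ge\sqrt{1-e^{-2r}}$. This is slightly stronger than the stated $\tanh r$, and it is sharp for a horoball.

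\emph{Part (1).} The first-variation argument for the contact set and the reduction to the region $\{h\ge 1,\ s^2+h^2\le e^{2\rho}\}$ in the upper half-plane both work. They give $\cosh D\le e^{\rho}$, that is, $D\le\rho+\ln\bigl(1+\sqrt{1-e^{-2\rho}}\bigr)$. With $\tau=\tanh(\rho/2)$ one has $\sqrt{1-e^{-2\rho}}=2\sqrt{\tau}/(1+\tau)$, so this is exactly the stated expression. It also confirms that $\tanh\rho_-(A)/2$ in the statement must be read as $\tanh(\rho_-(A)/2)$.

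\emph{Outer radius.} You can avoid the small/large case split. Since $\ln(1+x)\le x$ and $1-e^{-2\rho}\le 2\rho$, you get $\rho_+\le\rho+\sqrt{2}\,\rho^{1/2}$ for every $\rho$, so $c=\sqrt{2}$ works.

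\emph{The one unproved input.} You assume that every outward normal at a boundary point of an h-convex set determines a horoball containing $A$. For smooth $A$ this is the standard characterization. For general $A$ it is exactly what the paper's proof of the parallel-set lemma establishes, so you may quote it.

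Compared with the paper, the citation buys brevity. Your route buys a short, elementary derivation with explicit constants and a slightly better inequality in (2).
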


Recall that in a metric space $(\mathcal{X},d_{\mathcal{X}})$,  the $\varepsilon$-parallel set of $A\subseteq \mathcal{X}$ is defined  by $$A^\varepsilon:=\{x\in\mathcal{X}: d_\mathcal{X}(x,A)\leq \varepsilon\}.$$
Denote by $\mathcal{C}(\mathcal{X})$ the collection of nonempty compact subsets in $\mathcal{X}$. The Hausdorff metric on $\mathcal{C}(\mathcal{X})$ between two compact subsets $A$ and $B$ is defined by $$d_\mathcal{X}(A,B):=\inf\{\varepsilon\geq 0: A\subseteq B^\varepsilon\, \text{and} \,B\subseteq A^\varepsilon\}.$$

In Euclidean space, $(\mathcal{C}(\mathbb{R}^{n+1}),d_{\mathbb{R}^{n+1}})$ is complete and any closed, bounded subset of $\mathcal{C}(\mathbb{R}^{n+1})$ is compact, from which we know that every sequence in $\mathcal{K}(\mathbb{R}^{n+1})$ has a convergent subsequence (see \cite{schneiderbook}). 

In $\mathbb{H}^{n+1}$, it is  known that $(\mathcal{C}(\mathbb{H}^{n+1}),d_{\mathbb{H}^{n+1}})$ is complete and every closed, bounded subset of $\mathcal{C}(\mathbb{H}^{n+1})$ is compact. Now we consider the Klein model of $\mathbb{H}^{n+1}$. In this model the hyperbolic geodesic between two points coincide with the Euclidean straight line segment.  Consequently, we know that the hyperbolic convexity of subsets of the unit ball is equivalent to Euclidean convexity in the Klein model.  This implies that any bounded sequence in $\mathcal{K}(\mathbb{H}^{n+1})$ has a convergent subsequence.

\subsection{Generalized principal curvatures}\label{plementaryofgpc}
In this subsection we recall some basic concepts introduced by Kohlmann \cites{Kohlmann1991,Kohlmann1994,Kohlmann1998} and introduce the notion of modified generalized principal curvatures. 
For any $A\in\mathcal{K}(\mathbb{H}^{n+1})$, we define $F_{A}: \mathbb{H}^{n+1} \setminus A \rightarrow T \mathbb{H}^{n+1}$ as
\[ x=\exp_{f_{A}(x)}^{\mathbb{H}^{n+1}}\left(d_{\mathbb{H}^{n+1}}(A, x) F_{A}(x)\right),
\]
where $d_{\mathbb{H}^{n+1}}(x,y)$ denotes  the distance between $x,y\in\mathbb{H}^{n+1}$.
Denote by $\mathcal{N}(A)$ the image of $F_A$, which is a strong Lipschitz submanifold of the tangent bundle $T\mathbb{H}^{n+1}$ \cite{Walter}.

  The regularity properties of $f_A$, $F_A$ and $\mathcal{N}_A$ we introduce below were introduced in \cites{Walter1974 ,Kohlmann1991}. Firstly,   $f_A$  is locally Lipschitz in $\mathbb{H}^{n+1}\setminus A$, which implies that $f_A$ and $F_A$ are differentiable almost everywhere in $\mathbb{H}^{n+1}\setminus A$. For any $p,q\in\mathbb{H}^{n+1}$, we denote by $\overline{p q}$ the set of points of geodesic between two points $p$ and $q $, and denote by $\mathbb{P}_{\overline{p q}}$ the parallel displacement from $p$ to $q$ along $\overline{p q}$. If $f_A$ or $F_A$ are differentiable at $p\in \mathbb{H}^{n+1}\setminus A$, then  $f_A$ and $F_A$ are differentiable at every point of the geodesic ray starting from $f_A(p)$ towards $p$.  

For any $\varepsilon>0$, $\partial A^\varepsilon$ is a $C^1$ submanifold of $\mathbb{H}^{n+1}$.  $\mathcal{N}_A$ is a  $n$-dimensional strong Lipschitz submanifold of $T\mathbb{H}^{n+1}$ and $F_A\mid_{\partial A^\varepsilon}: \partial A^\varepsilon\rightarrow\mathcal{N}_A$ is a local bi-Lipschitz homeomorphism. We denote the dilatation with center $A$ by $h_t: \mathbb{H}^{n+1}\setminus A \rightarrow \mathbb{H}^{n+1}\setminus A$ for $t \in[0,1]$, which is defined by $h_t(q) \in \overline{q f_A(q)}$ and $d_{\mathbb{H}^{n+1}}\left(f_A(q), h_t(q)\right)=t d_{\mathbb{H}^{n+1}}\left(f_A(q), q\right)$. In fact,  $h_t(q)=\Phi_{td_{\mathbb{H}^{n+1}}(A,q)}(F_A(q))$ for any $q\in\mathbb{H}^{n+1}\setminus A$, here $\Phi$ is the geodesic flow

We now recall the generalized principal curvatures of $A$ introduced by Kohlmann \cite{Kohlmann1991}. Denote by $U\subseteq \mathbb{H}^{n+1}\setminus A$ the set consisting of all points in $\mathbb{H}^{n+1}\setminus A$ at which both $f_A$ and $F_A$ are differentiable. Since $f_A$ and $F_A$ are locally Lipschitz, we know that $U$ is a subset of full measure of $\mathbb{H}^{n+1}\setminus A$.  For $p\in\mathbb{H}^{n+1}\setminus A$, we define $N(p):=\Phi_{d(A,p)} (F_A(p))$. By  definition, $N$ is differentiable in $U$. For any $\varepsilon>0$, suppose that $p\in \partial A^\varepsilon\cap U$. Then $N(p)$ is the outer unit normal vector of $\partial A^\varepsilon$ and the classical second fundamental form $\langle D N(\cdot), \cdot\rangle_{\mathbb{H}^{n+1}}$ of $\partial A^\varepsilon$ is well defined at $p$. 
We choose an orthonormal frame $\{u_i\}$ consisting  of eigenvectors of $\langle D N(\cdot),\cdot\rangle_{\mathbb{H}^{n+1}}$ at $p$. By \cite{Kohlmann1991}, $\{h_{\frac{\varepsilon_1}{\varepsilon}*p}u_i\}(\varepsilon_1<\varepsilon)$ is an orthonormal frame consisting of eigenvectors of $\langle D N(\cdot),\cdot\rangle_{\mathbb{H}^{n+1}}$ 
 at $h_\frac{\varepsilon_1}{\varepsilon}(p)$. Let $\kappa_i(p):=\langle D N(u_i),u_i\rangle_{\mathbb{H}^{n+1}}$ and $f_i(t)=\sinh{(t)}\kappa_i(p)+\cosh{(t)}$. Following \cite{Kohlmann1991}  the generalized principal curvatures are defined by 
 \begin{align}\kappa_i(F_A(p))=\lim\limits_{t\rightarrow 0^+}\frac{f'_i(t-\varepsilon)}{f_i(t-\varepsilon)}\in \mathbb{R}\cup\{\infty\}\label{geprincur}\end{align} 
 for any $p\in \partial A^\varepsilon\cap U$, where $\kappa_i(F_A(p))=\infty$ iff $f_i(-\varepsilon)=0$. When $A\in\mathcal{K}_{n+1}({\mathbb{H}^{n+1}})$ and $A$ is smooth, at any $p\in\partial A$ we know that there exists unique $v\in \mathcal{N}_A\cap T_p\mathbb{H}^{n+1}$ and $\kappa_i(v)$ coincides with the usual principal curvatures of $\partial A$ at $p$. Denote by $\tilde{\mathcal{N}}_{A}\subseteq \mathcal{N}_A$ the set on which $\kappa_i(1\leq i\leq n)$ are well defined. By $\mathcal{N}_A=\Phi_{-\varepsilon}(N(\partial A^\varepsilon))$ we know that $\tilde{\mathcal{N}}_{A}$ is a $\mathcal{L}^n$ full measure subset of $\mathcal{N}_{A}$. Due to the structure of $A$, by \cite{Kohlmann1991} we know that $\Pi(\tilde{\mathcal{N}}_{A,0})\subseteq \partial_0 A$ and $\Pi|_{\tilde{\mathcal{N}}_{A,0}}$ is an injection. 
 %\begin{rmk}
  %   In $\mathbb{R}^{n+1}$, for any $A\in\mathcal{K}_{n+1}(\mathbb{R}^{n+1})$, we can define the generalized principal curvatures $k^0_i(p)$ by a similar argument as above, with replacing $\cosh t$ by $1$ and $\sinh t$ by $t$, see  \cite{Kohlmann1991} for details.
% \end{rmk}

Now we recall more details of the generalized curvature measures. In the following we assume that $\kappa_1(v)\geq \kappa_2(v)\geq\dots\geq \kappa_n(v)$ for any $v\in \tilde{\mathcal{N}}_A$ such that the $\kappa_i(v)$ exists. For $0\leq r \leq n$, let 
\[\tilde{\mathcal{N}}_{A, r}:=\left\{v \in \tilde{\mathcal{N}}_A \mid \kappa_1(v)=\ldots=\kappa_r(v)=\infty \text { and } \kappa_{r+1}(v)<\infty\right\} .\]
 When $ v \in \tilde{\mathcal{N}}_{A, r}$, $g(v)$ has following expression: $$g(v):=\infty^{-r} \prod_{i=r+1}^n\left(1+\kappa_i(v)^2\right)^{-1/2},$$
where the ``$\infty^{-r}$" is an algebraic expression. For example, for $v\in \tilde{\mathcal{N}}_{A,r}$, we have \[g(v)E_k(v)=\left\{\begin{aligned}
&0, & k<r,\\
&\binom{n}{r}^{-1}\prod_{i=r+1}^n \left(1+\kappa_i(v)^2\right)^{-1/2}, &k=r,\\
&\binom{n}{k}^{-1}\prod_{i=r+1}^n \left(1+\kappa_i(v)^2\right)^{-1/2}\\
&\qquad\cdot \sum\limits_{\substack{r+1\leq i_{r+1}<\cdots<i_k\leq n}}\kappa_{i_{r+1}}(v)\cdots\kappa_{i_k}(v), &r<k\leq n.\end{aligned}\right.\]
For any $v\in\tilde{\mathcal{N}}_A$, $g(v)E_r(v)$ is a real number for any $r=0,\dots, n$.  If $v\in\tilde{\mathcal{N}}_A\setminus\tilde{\mathcal{N}}_{A,0}$, then  $g(v)=0$. By Kohlmann \cite{Kohlmann1991}, for each $k=0,\dots,n$, the function $v\rightarrow g(v)E_k(v)$ is $\mathcal{L}^n$ measurable on $\tilde{\mathcal{N}}_A$, and $V_k(A,\cdot)$ is a Borel measure.

Now we introduce the modified curvature measures. In the case where $\kappa_i=\infty$ we treat $\kappa_i-1$ as $\infty$ algebraically. For $v\in\tilde{\mathcal{N}}_{A,r}$, we have  \begin{align}g(v)\tilde{E}_k(v)=\left\{\begin{aligned}
&0, & k<r,\\
&\binom{n}{r}^{-1}\prod_{i=r+1}^n \left(1+\kappa_i(v)^2\right)^{-1/2}, &k=r,\\
&\binom{n}{k}^{-1}\prod_{i=r+1}^n \left(1+\kappa_i(v)^2\right)^{-1/2}\\
&\qquad\cdot \sum\limits_{\substack{r+1\leq i_{r+1}<\cdots<i_k\leq n}}(\kappa_{i_{r+1}}(v)-1)\cdots(\kappa_{i_k}(v)-1), &r<k\leq n.\end{aligned}\right.\label{gtildeE}\end{align}

For any $v\in\tilde{\mathcal{N}}_{A,0}$, we have 
\begin{align}\label{jixian}
  &g(v)\tilde{E}_k(v)\nonumber\\=&
  \binom{n}{k}^{-1}\prod_{i=1}^n\left(1+\kappa_i(v)^2\right)^{-1/2}\sum\limits_{1\leq i_{1}<\cdots<i_k\leq n}(\kappa_{i_{1}}(v)-1)\cdots(\kappa_{i_k}(v)-1)\nonumber\\
  =&\binom{n}{k}^{-1}\prod_{i=1}^n\left(1+\kappa_i(v)^2\right)^{-1/2}\sum\limits_{\ell=0}\limits^{k}(-1)^{k-\ell}\binom{n-\ell}{k-\ell}\nonumber\\
  &\cdot\sum\limits_{1\leq i_{1}<\cdots<i_\ell\leq n}\kappa_{i_{1}}(v)\cdots\kappa_{i_\ell}(v)\nonumber\\
  =&\sum\limits_{\ell=0}\limits^{k}\left((-1)^{k-\ell}\binom{k}{\ell}\left(\prod_{i=1}^n\left(1+\kappa_i(v)^2\right)^{-1/2}\right.\right.\nonumber\\&
  \left.\left.\cdot\binom{n}{l}^{-1}\sum\limits_{1\leq i_{1}<\cdots<i_\ell\leq n}\kappa_{i_{1}}(v)\cdots\kappa_{i_\ell}(v)\right)\right)\nonumber\\
  =&\sum\limits_{\ell=0}\limits^k\binom{k}{\ell}g(v)E_k(v).\end{align}
  For any $r=1,\dots,n$ and $k> r$,  letting $\kappa_{1}, \cdots, \kappa_r$ in the second line and third line of \eqref{jixian} tend to $\infty$, we have
  \begin{align*}&g(v)\tilde{E}_k(v)\\
      =&\binom{n}{k}^{-1}\prod_{i=r+1}^n\left(1+\kappa_i(v)^2\right)^{-1/2}\\
      &\cdot\sum\limits_{r+1\leq i_{r+1}<\cdots<i_k\leq n}(\kappa_{i_{r+1}}(v)-1)\cdots(\kappa_{i_k}(v)-1)\\
      =&\sum\limits_{\ell=r+1}\limits^{k}\left((-1)^{k-\ell}\binom{k}{\ell}
      \left(\prod_{i=r+1}^n\left(1+\kappa_i(v)^2\right)^{-1/2}\right.\right.\\
      &\left.\left.\cdot\binom{n}{l}^{-1}\sum\limits_{{r+1}\leq i_{r+1}<\cdots<i_\ell\leq n}\kappa_{i_{r+1}}(v)\cdots\kappa_{i_\ell}(v)\right)\right)\\
      =&\sum\limits_{\ell=0}\limits^{k}(-1)^{k-\ell}\binom{k}{\ell}g(v)E_\ell(v), 
  \end{align*}
  for any  $v\in\tilde{\mathcal{N}}_{A,r}$.   After a straightforwardly verifying the remaining cases, we have
\begin{align}
    g(v)\tilde{E}_k(v)=\sum\limits_{\ell=0}\limits^{k}(-1)^{k-\ell}\binom{k}{\ell}g(v)E_\ell(v), \,\text{for any } v\in\tilde{\mathcal{N}}_{A}.\label{relationsbetweenge}
\end{align}
For any $k=0,\dots, n$,  the $\mathcal{L}^n$ measurability of $v\mapsto g(v)E_k(v)$ implies that the function $v\mapsto g(v)\tilde{E}_k(v)$ is also $\mathcal{L}^n$ measurable. 

 For any  $v\in \tilde{\mathcal{N}}_{A,0}$, by Kohlmann \cite{Kohlmann1991}, one can choose $$\left((u_1,\kappa_1(v) u_1),\dots, (u_n,\kappa_n(v)u_n) \right)$$ as a basis of $T_v\mathcal{N}_A$. Hence the Jacobian of 
\begin{align*}
    \Pi: \tilde{\mathcal N}_{A,0}&\rightarrow \partial A,\\
    v&\mapsto\Pi(v)
\end{align*}
is  $g(v)$. 
 Due to the convexity of $A$, we have $\mathcal{L}^n(\Pi(\tilde{\mathcal{N}}_{A,0}))=\mathcal{L}^n(\partial A)$. Noticing that $\mathcal{N}_A$ is an $n$-dimensional strong  Lipschitz manifold, by coarea formula in \cite{federe}  we have 
\begin{align}
 V_n(A,\beta)=&\int_{\mathcal{N}_A\cap \Pi^{-1}(\beta)}g(v)d\mathcal{L}^n v\nonumber\\
 =&\int_{\tilde{\mathcal{N}}_{A,0}\cap\Pi^{-1}(\beta)}   g(v)d\mathcal{L}^n v\nonumber\\
 =&\int_{\partial A\cap \beta}d\mathcal{L}^np=\mathcal{L}^n(\partial A\cap \beta).\label{Vn}
\end{align}

 When $A\in\mathcal{K}_{n+1}({\mathbb{H}^{n+1}})$ and $A$ is smooth, we denote by $N(p)$ the outer unit normal vector for any $p\in\partial A$. By a direct calculation we know that $g(v)$ actually equals the Jacobian of the map $N$ at $\Pi(v)$, from which we have  $$V_k(A, \beta)=\binom{n}{k}\int_{\partial A \cap \beta} E_{n-k}(\kappa) d \mu$$ for any $\beta\in\mathcal{B}(\mathbb{H}^{n+1})$.

Moreover, Kohlmann  proved  the following result in \cite[Lemma 2.4]{Kohlmann1994}.         \begin{thm}[\cite{Kohlmann1994}]\label{weakcon}
    Let $A, A_1, A_2, \dots$ be compact convex domains in $\mathbb{H}^{n+1}$ and \[\lim\limits_{i\rightarrow\infty}d_{\mathbb{H}^{n+1}}(A,A_i)=0.\] Then we have \begin{align*}
        V_k(A_i,\cdot)\stackrel{w}\rightharpoonup V_k(A,\cdot), \quad k=0,\dots,n.
    \end{align*}
\end{thm}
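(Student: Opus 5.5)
The statement to prove is Theorem~\ref{weakcon} (Kohlmann's Lemma 2.4): Hausdorff convergence $A_i\to A$ of compact convex domains implies weak convergence $V_k(A_i,\cdot)\rightharpoonup V_k(A,\cdot)$ for $k=0,\dots,n$. My plan is to derive this from the local Steiner formula \eqref{steiner}, which encodes all curvature measures as coefficients of a polynomial-type expansion in the functions $\ell_{n+1-k}(\varepsilon)$. Fix a continuous function $\varphi$ with compact support on $\mathbb{H}^{n+1}$. I would first upgrade \eqref{steiner} from Borel sets to test functions:
\[
\int_{A_\varepsilon\setminus A}\varphi(f_A(x))\,dx=\sum_{k=0}^{n}\ell_{n+1-k}(\varepsilon)\int\varphi\, dV_k(A,\cdot).
\]
The $k=n+1$ term, $\int_A\varphi$, is subtracted off separately. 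This follows by approximating $\varphi$ with simple functions.

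The first key step is to show that for each fixed $\varepsilon>0$ the left-hand side is continuous in $A$ under Hausdorff convergence. Two facts are needed. First, the metric projections converge locally uniformly: $f_{A_i}\to f_A$ on compact sets. This is the standard argument by uniqueness of the nearest point in a convex set, combined with compactness; working in the Klein model is convenient here. Second, the indicator functions of $A_{i,\varepsilon}\setminus A_i$ converge to that of $A_\varepsilon\setminus A$ almost everywhere, because $\partial A_\varepsilon$ and $\partial A$ have Lebesgue measure zero. Dominated convergence then gives
\[
\int_{A_{i,\varepsilon}\setminus A_i}\varphi\circ f_{A_i}\to\int_{A_\varepsilon\setminus A}\varphi\circ f_A .
\]

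The second key step inverts the Steiner expansion. Choose $n+1$ distinct values $\varepsilon_0,\dots,\varepsilon_n>0$. The functions $\ell_1,\dots,\ell_{n+1}$ are linearly independent: near $0$ they behave like $t,t^2/2,\dots$ up to constants. Hence the values can be chosen so that the matrix $(\ell_{n+1-k}(\varepsilon_j))_{j,k}$ is invertible. Each quantity $\int\varphi\,dV_k(A,\cdot)$ is then a fixed linear combination of the left-hand sides at $\varepsilon_0,\dots,\varepsilon_n$. By the first step, $\int\varphi\,dV_k(A_i,\cdot)\to\int\varphi\,dV_k(A,\cdot)$.

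To conclude weak convergence, I still need uniform boundedness of the total masses, so that convergence on $C_c$ upgrades to weak convergence. This holds since all supports lie in a common compact set, namely $\partial A_i$ within a bounded region. The masses are controlled by taking $\varphi\equiv1$ on that region. Each $V_k$ is nonnegative for convex sets, as the generalized curvatures are nonnegative.

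I expect the main obstacle to be the locally uniform convergence of the projections $f_{A_i}$ to $f_A$ in hyperbolic space, together with the measure-zero boundary argument. I would handle both by citing convexity (the projection is $1$-Lipschitz by the negative curvature) plus a compactness-and-uniqueness argument.
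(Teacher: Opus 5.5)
The paper gives no proof of Theorem~\ref{weakcon}; it simply quotes Kohlmann's Lemma 2.4. Your argument is the standard route to weak continuity of curvature measures, the one Federer uses for sets of positive reach and Schneider for Euclidean convex bodies. You rewrite \eqref{steiner} as a push-forward of Lebesgue measure on $A_\varepsilon\setminus A$ under $f_A$, show that the localized parallel volumes depend continuously on $A$, and invert the Steiner system at finitely many radii. The approach is sound and supplies what the paper leaves to the citation.

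One justification needs tightening. Null boundaries alone do not give $\mathbf{1}_{A_{i,\varepsilon}\setminus A_i}\to\mathbf{1}_{A_\varepsilon\setminus A}$ a.e. For $x\in\mathrm{int}\,A$ you need $x\in A_i$ for all large $i$, and $d(x,A_i)\to 0$ does not imply this: finite $1/i$-nets of $A$ converge in Hausdorff distance but have measure zero. The inclusion does follow from convexity of $A_i$, as follows.
\begin{itemize}
\item Suppose $\bar B(x,r)\subseteq A$ and $x\notin A_i$, and set $p=f_{A_i}(x)$.
\item The totally geodesic hyperplane through $p$ orthogonal to the geodesic $px$ bounds a half-space containing $A_i$.
\item Let $y$ be the point at distance $r$ beyond $x$ on the ray from $p$ through $x$. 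Then $d(y,A_i)\geq d(y,p)>r$.
\item Since $y\in A$, this gives $d_{\mathbb{H}^{n+1}}(A,A_i)>r$, which fails for large $i$.
\end{itemize}

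Alternatively, you can avoid the issue entirely. Keep the term $\ell_0V_{n+1}(A,\cdot)$ and invert the $(n+2)\times(n+2)$ system built from $1,\ell_1,\dots,\ell_{n+1}$, which are still linearly independent. Then only the indicators of $A_{i,\varepsilon_j}$ appear. Their a.e.\ convergence needs only $d(x,A_i)\to d(x,A)$ and $\mathcal{L}^{n+1}(\partial A_{\varepsilon_j})=0$. This also yields convergence of $V_{n+1}$.

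Two steps can be simplified.
\begin{itemize}
\item Pointwise convergence $f_{A_i}(x)\to f_A(x)$ already suffices for dominated convergence, so you do not need the $1$-Lipschitz property or locally uniform convergence. Pointwise convergence follows from your compactness-and-uniqueness argument.
\item All $V_k(A_i,\cdot)$ are supported in one fixed compact set. Multiplying a bounded continuous test function by a cutoff equal to $1$ there reduces it to the $C_c$ case, so the separate mass bound is automatic.
\end{itemize}
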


\subsection{Evolution equations}
In the following, we denote by $\Psi(\mathcal{W}-I)$ the curvature function  $\Psi(\mathcal{W}-I)=\tilde{E}_n$ for convenience, and denote by $\dot{\Psi}^{k l}(\mathcal{W}-I), \ddot{\Psi}^{k l, p q}(\mathcal{W}-I)$ the first and second derivatives of $\Psi$ with respect to the components of $\mathcal{W}-I$.

Assume that $M_t$ is a smooth solution to the flow (\ref{flow}) on the time interval $[0,T)$. Then we have the evolution equations of the induced metric,  Weingarten matrix, induced area element $d \mu_t$, and unit outer normal vector $\nu$ as follows (see \cite{AndrewsWei2018})
\begin{align}
    \frac{\partial}{\partial t} g_{i j} & =2(\phi(t)-\Psi(\mathcal{W}-I)) h_{i j}, \\ 
    \frac{\partial}{\partial t} \nu & =\nabla \Psi(\mathcal{W}-I), \\ 
    \frac{\partial}{\partial t} d \mu_{g_t} & =n E_1(\phi(t)-\Psi(\mathcal{W}-I)) d \mu_{g_t}, 
    \\ \frac{\partial}{\partial t} h_i^j & =\nabla^j \nabla_i \Psi(\mathcal{W}-I)+(\Psi(\mathcal{W}-I)-\phi(t))\left(h_i^k h_k^j-\delta_i^j\right),
    \end{align}
where $\delta^j_i$ equals $1$ when $i=j$ and $0$ otherwise. We may ignore the distinction between upper and lower indices on the right sides of the equations when deriving the evolution equations  because we can choose an orthonormal coordinate at a point.

Due to the shifted curvature term in flow (\ref{flow}), it is convenient to work with the shifted Weingarten matrix of $M_t$, which is denoted by $S=\mathcal{W}-I$. We express the evolution equation of $S$ along flow (\ref{flow}) in the following lemma. 
\begin{lemma}\label{lemmasij}
    \begin{align} \nonumber\frac{\partial}{\partial t} S^j_i=\frac{\partial}{\partial t} h^j_i=&\nabla^j \nabla_i \Psi(S)+(\Psi(S)-\phi(t))\left( (S^2)^j_i+2S^j_i\right) \\
    = & \dot{\Psi}^{k l}(S) \nabla_k \nabla_l S^j_i+\ddot{\Psi}^{k l, p q}(S) \nabla^j S^k_l \nabla_i S^p_q\nonumber\\
    &+((1-\alpha)\Psi(S)-\phi-\dot{\Psi}^{kl}(S)\delta^k_l)(S^2)^j_i\notag
    \\&+(\dot{\Psi}^{k l}(S^2)^k_l+2\Psi(S)-2\phi)S^j_i+\dot{\Psi}^{k l}(S^2)^k_l\delta^j_i.\label{Sij}\end{align}
\end{lemma}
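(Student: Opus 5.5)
The statement to prove is Lemma \ref{lemmasij}. We work at a point with an orthonormal frame, so that $h^j_i$, $S^j_i=h^j_i-\delta^j_i$ and $\dot\Psi^{kl}$ are all diagonal. The plan is to start from the evolution equation of the Weingarten matrix recorded above. Since $\delta^j_i$ is time independent, $\partial_t S^j_i=\partial_t h^j_i$. Wherever $\Psi$ appears in the speed it stands for the actual speed $\Psi(S)$ of the flow \eqref{flow}, and we treat it as a smooth function of $S$.

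\textbf{Step 1 (the zero-order term).} Substitute $h=S+I$ into the reaction term:
\[
h^k_ih^j_k-\delta^j_i=(S^2)^j_i+2S^j_i.
\]
This gives the first line of \eqref{Sij} directly.

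\textbf{Step 2 (expanding the Hessian).} Expand $\nabla^j\nabla_i\Psi(S)$ by the chain rule:
\[
\nabla^j\nabla_i\Psi=\dot\Psi^{kl}\nabla^j\nabla_iS^l_k+\ddot\Psi^{kl,pq}\nabla^jS^k_l\nabla_iS^p_q.
\]
Since $\nabla S=\nabla h$, the second-derivative term $\nabla_j\nabla_i h_{kl}$ can be commuted to $\nabla_k\nabla_l h_{ij}$ using Codazzi and the Ricci identity. In $\mathbb{H}^{n+1}$ (sectional curvature $-1$) the standard Simons-type identity reads
\[
\nabla_i\nabla_jh_{kl}=\nabla_k\nabla_lh_{ij}+h_{kl}h_{im}h_{mj}-h_{km}h_{ml}h_{ij}+h_{kj}h_{il}-h_{il}h_{kj}\cdots.
\]
The precise form is taken from \cite{AndrewsWei2018}: in a diagonal frame it contributes $(h_{kk}h_{ii}^2-h_{kk}^2h_{ii})+(h_{kk}-h_{ii})$ per index. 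Contracting with $\dot\Psi^{kl}$ then gives
\[
\dot\Psi^{kl}\nabla_k\nabla_lS^j_i+\dot\Psi^{kl}(h^2)_{kl}\,h^j_i-\dot\Psi^{kl}h_{kl}\,(h^2)^j_i-\dot\Psi^{kl}\delta_{kl}\,h^j_i+\dot\Psi^{kl}h_{kl}\,\delta^j_i .
\]
The exact signs of the ambient curvature terms are what Step 3 has to fix.

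\textbf{Step 3 (rewriting in terms of $S$).} Replace every $h$ by $S+I$ and use Euler's relation $\dot\Psi^{kl}S_{kl}=\alpha\Psi$; this homogeneity of the speed is exactly where the coefficient $(1-\alpha)$ comes from. Expand $\dot\Psi^{kl}h_{kl}=\alpha\Psi+\dot\Psi^{kl}\delta_{kl}$ and $\dot\Psi^{kl}(h^2)_{kl}=\dot\Psi^{kl}(S^2)_{kl}+2\alpha\Psi+\dot\Psi^{kl}\delta_{kl}$. Then collect the coefficients of $(S^2)^j_i$, $S^j_i$ and $\delta^j_i$, and add the contributions $(\Psi-\phi)\bigl((S^2)^j_i+2S^j_i\bigr)$ from Step 1. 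Many terms cancel: the $\dot\Psi^{kl}\delta_{kl}$ pieces largely cancel between the $\pm 1$ curvature terms, and what survives should be
\[
((1-\alpha)\Psi-\phi-\dot\Psi^{kl}\delta_{kl})\,S^2 ,\qquad (\dot\Psi^{kl}(S^2)_{kl}+2\Psi-2\phi)\,S ,\qquad \dot\Psi^{kl}(S^2)_{kl}\,\delta .
\]

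\textbf{Main obstacle.} The difficulty is bookkeeping rather than any idea: the signs of the ambient curvature terms in the commutator, and the exact cancellations among $\alpha\Psi$ and $\dot\Psi^{kl}\delta_{kl}$ in the $S$ and $\delta$ coefficients. I would verify these carefully term by term in a diagonal frame, checking each of the three coefficients separately against \eqref{Sij}.
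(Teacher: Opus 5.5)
Your route is the paper's own: it simply invokes $\dot\Psi^{kl}S_{kl}=\alpha\Psi$ and $h^2=S^2+2S+I$, and then does the Andrews--Wei computation. Step 1 and the chain-rule expansion are fine.

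The gap is in Step 2, the only nontrivial input. In your contracted commutation formula the ambient-curvature terms have the wrong sign; they are the signs for the unit sphere, $K=+1$. You also leave those signs to be ``fixed'' in Step 3, i.e.\ by matching the answer, which is circular. In $\mathbb{H}^{n+1}$ the correct identity is
\[
\dot\Psi^{kl}\nabla_i\nabla_jh_{kl}=\dot\Psi^{kl}\nabla_k\nabla_lh_{ij}-\dot\Psi^{kl}h_{kl}\,(h^2)_{ij}+\dot\Psi^{kl}(h^2)_{kl}\,h_{ij}+\dot\Psi^{kl}g_{kl}\,h_{ij}-\dot\Psi^{kl}h_{kl}\,g_{ij}.
\]
The other commutator terms drop out after contraction because $\dot\Psi$ and $h$ are simultaneously diagonal. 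You can pin the sign down independently in two ways:
\begin{itemize}
\item For $\dot\Psi^{kl}=g^{kl}$ the formula must reduce to Huisken's $\Delta h_{ij}=\nabla_i\nabla_jH+H(h^2)_{ij}-|h|^2h_{ij}+K(nh_{ij}-Hg_{ij})$ with $K=-1$.
\item Take a tube about a totally geodesic $\mathbb{H}^m$, $1\le m\le n-1$. It has parallel second fundamental form and principal curvatures $\lambda=\coth r$, $\mu=\tanh r$, so the algebraic part must vanish. The correct formula gives multiples of $(\lambda-\mu)(\lambda\mu-1)=0$, whereas yours gives $(\lambda-\mu)(\lambda\mu+1)\neq 0$.
\end{itemize}
Your displayed uncontracted identity and your ``per index'' expression should also be discarded. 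The per-index expression has the cubic terms with the opposite sign to your own contracted formula, and $h_{kj}h_{il}-h_{il}h_{kj}\equiv 0$.

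This error is not cosmetic. Running Step 3 with your version gives the coefficient $\dot\Psi^{kl}(S^2)_{kl}+2\Psi-2\phi-2\dot\Psi^{kl}\delta_{kl}$ for $S^j_i$ and $\dot\Psi^{kl}(S^2)_{kl}+2\alpha\Psi$ for $\delta^j_i$. So the cancellations you anticipate do not happen and \eqref{Sij} is not reached.

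With the correct sign the computation closes. Set $a=\dot\Psi^{kl}\delta_{kl}$ and $e=\dot\Psi^{kl}(S^2)_{kl}$, so that $\dot\Psi^{kl}h_{kl}=\alpha\Psi+a$ and $\dot\Psi^{kl}(h^2)_{kl}=e+2\alpha\Psi+a$. The algebraic part of $\nabla^j\nabla_i\Psi$ is then
\[
-(\alpha\Psi+a)h^2+(e+2\alpha\Psi+2a)h-(\alpha\Psi+a)I=-(\alpha\Psi+a)S^2+eS+eI .
\]
Adding $(\Psi-\phi)(S^2+2S)$ from Step 1 gives exactly the three coefficients $(1-\alpha)\Psi-\phi-a$, $e+2\Psi-2\phi$ and $e$ in \eqref{Sij}.
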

\begin{proof}
     Using the identities $\dot{\Psi}^{kl}(S)S_{kl}=\alpha \Psi(S)$ and $(h^2)^j_i=(S^2)^j_i+2S^j_i+\delta^j_i$, the evolution equation of $S^j_i$ follows by a  calculation analogous to that in \cite[Lemma 2.4]{AndrewsWei2018}.
\end{proof}
Since \begin{align*}
        \frac{\partial}{\partial t}\Psi(S)=\dot{\Psi}^{kl}\frac{\partial}{\partial t}S^k_l,
    \end{align*} Lemma \ref{lemmasij} immediately yields the evolution equation of the curvature function $\Psi$.
\begin{lemma}
    \begin{align}\nonumber 
    \frac{\partial}{\partial t} \Psi(S)= & \dot{\Psi}^{k l}(S) \nabla_k \nabla_l \Psi(S)-\dot{\Psi}^{k l}(S) (\phi(t)-\Psi(S))(S^2)_{kl}\\
    &\quad -2\alpha\Psi(S)(\phi(t)-\Psi(S)).
    \end{align}
\end{lemma}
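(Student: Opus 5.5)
The statement to be proved is the last lemma, the evolution equation of $\Psi(S)$. The plan is to contract the evolution equation \eqref{Sij} of Lemma \ref{lemmasij} with $\dot{\Psi}^{ij}(S)$, using $\partial_t\Psi(S)=\dot{\Psi}^{ij}\partial_t S^j_i$. It is cleaner to start from the first line of \eqref{Sij} than from the expanded second form. Contracting the first line gives
\begin{align*}
\frac{\partial}{\partial t}\Psi(S)=\dot{\Psi}^{ij}\nabla^j\nabla_i\Psi(S)+(\Psi(S)-\phi(t))\,\dot{\Psi}^{ij}\big((S^2)^j_i+2S^j_i\big).
\end{align*}
The whole computation then reduces to the second term.

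Here $\Psi$ in the flow speed stands for $\tilde E_n^{\alpha/n}$, which is $\alpha$-homogeneous in $S$. That is the homogeneity used in the proof of Lemma \ref{lemmasij}. Euler's relation therefore gives $\dot{\Psi}^{ij}S_{ij}=\alpha\Psi(S)$, so
\begin{align*}
2(\Psi-\phi)\dot{\Psi}^{ij}S_{ij}=-2\alpha\Psi(S)(\phi(t)-\Psi(S)),
\end{align*}
and the remaining term is $-\dot{\Psi}^{kl}(\phi-\Psi)(S^2)_{kl}$. Together these give exactly the claimed formula.

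The only points needing care are the following.
\begin{itemize}
\item The symmetrization convention and the use of orthonormal frames at a point. These justify writing $\dot{\Psi}^{ij}\nabla_j\nabla_i\Psi$ with indices raised or lowered freely.
\item The fact that $\nabla^j\nabla_i\Psi(S)$ is already the Hessian of the speed. The first-line form therefore avoids re-expanding $\nabla\nabla S$ via Simons' identity.
\end{itemize}

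As a consistency check, one could instead contract the expanded second form of \eqref{Sij}. The $\ddot\Psi\nabla S\nabla S$ term combines with $\dot\Psi\dot\Psi\nabla\nabla S$ to reassemble $\dot\Psi\nabla\nabla\Psi$. The zeroth-order terms should also collapse, and the key inputs there are again Euler's identity and the expression of $\dot\Psi^{kl}\delta^j_i$ contracted with $\dot\Psi^{ij}$. Verifying that this cancellation matches is the only potential obstacle, and I would simply avoid it by using the unexpanded form.
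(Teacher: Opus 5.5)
Your proposal is correct and follows the paper's route: the paper obtains this lemma by contracting Lemma \ref{lemmasij} with $\dot{\Psi}^{kl}(S)$ and applying the Euler identity $\dot{\Psi}^{kl}(S)S_{kl}=\alpha\Psi(S)$, which, as you rightly note, requires reading $\Psi$ as $\tilde{E}_n^{\alpha/n}$. Contracting the unexpanded first line of \eqref{Sij} is the most direct way to do this, and your optional check via the expanded form also works, since the $\dot{\Psi}^{kl}\delta_{kl}$ terms cancel there.
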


Denote by $\Omega_t$ the compact domain enclosed by $M_t$. From \cite{AndrewsChenWei2021}, we have the following lemma.
\begin{lemma}\label{quermass}
    \begin{align} \frac{\partial}{\partial t} \tilde W_l(\Omega_t)=\int_{M_t}E_l(S)(\phi(t)-\Psi(S))d\mu_{g_t}.\label{evolutionquer}\end{align}
\end{lemma}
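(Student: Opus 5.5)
The statement to prove is Lemma \ref{quermass}, the evolution of $\tilde W_l(\Omega_t)$. I plan to reduce it to the known first variation of the ordinary quermassintegrals and then recombine through the binomial definition \eqref{Quermassine}. For a smooth family of domains moving with normal speed $f\nu$, where here $f=\phi(t)-\Psi(S)$, the classical variational formula in $\mathbb{H}^{n+1}$ reads
\begin{align*}
\frac{\partial}{\partial t}W_i(\Omega_t)=\int_{M_t}E_i(\kappa)\,f\,d\mu_{g_t},\qquad i=0,\dots,n,
\end{align*}
with $E_0=1$, since $W_0=|\Omega_t|$. This is standard; it can be derived from the Steiner-type relations \eqref{QandC} together with the variation of $\int E_k$, as in Wang--Xia \cite{WangXia2014}. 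I will take it as given, normalized so that the coefficient is exactly $1$.

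From \eqref{Quermassine},
\begin{align*}
\frac{\partial}{\partial t}\tilde W_l(\Omega_t)=\int_{M_t}\Big(\sum_{i=0}^l(-1)^{l-i}\binom{l}{i}E_i(\kappa)\Big)f\,d\mu_{g_t}.
\end{align*}
It therefore remains to verify the pointwise algebraic identity
\begin{align*}
\sum_{i=0}^l(-1)^{l-i}\binom{l}{i}E_i(\kappa)=E_l(\kappa-1)=E_l(S).
\end{align*}
This is the same expansion as in \eqref{relationsbetweenge}, and I will prove it as follows. Expand $\prod_{j\in I}(\kappa_j-1)$ over $l$-subsets $I$. Each monomial $\kappa_{J}$ with $|J|=i$ then appears with coefficient $(-1)^{l-i}\binom{n-i}{l-i}$. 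Using the normalizations,
\begin{align*}
\binom{n}{l}^{-1}\binom{n-i}{l-i}\binom{n}{i}=\binom{l}{i},
\end{align*}
which gives the identity.

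The only delicate point is the normalization convention in the first variation of $W_i$. Some references carry a factor such as $\frac{n+1-i}{n+1}$, depending on how $W_i$ is normalized. With the paper's normalization, where $W_0=|A|$ and $V_n=nW_1=|\partial A|$, the coefficient is $1$. I will check this against \eqref{QandC}: the variation of $W_1=|\partial A|/n$ is $\frac1n\int nE_1 f=\int E_1 f$, which is consistent. Once the normalization is confirmed, the lemma follows immediately, so this check is the main (mild) obstacle.
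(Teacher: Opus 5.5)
Your proposal is correct and is the standard argument behind this lemma, which the paper does not prove but cites from \cite{AndrewsChenWei2021}: with the paper's normalization, $\frac{\partial}{\partial t}W_i(\Omega_t)=\int_{M_t}E_i(\kappa)(\phi-\Psi)\,d\mu_{g_t}$ for every $0\le i\le n$, and the binomial identity $E_l(\kappa-1)=\sum_{i=0}^l(-1)^{l-i}\binom{l}{i}E_i(\kappa)$ (the same one as in \eqref{relationsbetweenge}) then gives the result. You only check the unit coefficient at $i=0,1$, but it holds for all $i$ by induction: combine \eqref{QandC} with $\frac{d}{dt}\int_{M_t}E_k\,d\mu=\int_{M_t}\big((n-k)E_{k+1}+kE_{k-1}\big)f\,d\mu$ and solve for $\frac{d}{dt}W_{k+1}$, using $n-k\ge 1$.
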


By Lemma \ref{quermass}, we have $$\frac{\partial}{\partial t}  \mathrm{Vol}(\Omega_t)=\int_{M_t}(\phi(t)-\Psi(S))d\mu_{g_t}=0,$$
from which we know that the volume of $\Omega_t$ is preserved along flow (\ref{flow}).

\section{Modified curvature measures for h-convex domains}\label{rigidity}
In this section we derive some convergence properties of $\mathcal{H}_{n+1}({\mathbb{H}^{n+1}})$ and introduce the modified curvature measures of h-convex domains. Moreover, we adapt Kohlmann's  technique in \cite{Kohlmann1998} to characterize geodesic balls in $\mathbb{H}^{n+1}$ via modified curvature measures.

\subsection{Hausdorff convergence of h-convex domains}
In the following we consider the properties of the metric space $(\mathcal{H}_{n+1}(\mathbb{H}^{n+1}), d_{\mathbb{H}^{n+1}})$. It is known that a domain $\Omega$ in hyperbolic space is h-convex if and only if for any $x,y\in \Omega$($x\neq y$), every horocycle segment from $x$ to $y$ lies in $\Omega$. 

For $x,y\in\mathbb{H}^{n+1}$, denote by $H(x,y)$ the set of all points lying on some horocycle  segment from $x$ to $y$. To describe $H(x,y)$ more precisely,  we consider the Poincar\'e ball model of hyperbolic space $\mathbb{H}^{n+1}$ and take  $x$ to be the center $O$ of the unit ball. For any $y\neq O$, let $\Pi$ be a $2$-dimensional totally geodesic plane containing both $O$ and $y$. Viewed in the Poincar\'e ball model, $\Pi$ is a Euclidean disc obtained by intersecting a plane through $O$ with the unit ball. Then a horocycle segment from $O$ to $y$ in hyperbolic space corresponds to a minor arc $\overset{\large{\frown}}{Oy}$ of a circle in Euclidean space that lies in the $\overline{\Pi}$, passes through $O$ and $y$ and is tangent to the boundary of the  closure of $\Pi$. Then $H(O,y)$, viewed in Euclidean space, is  obtained by rotating the $\overset{\large{\frown}}{Oy}$ around the line $\overline{Oy}$ (see also in \cite{BorisenkoMiquel1999}).

Denote by $\mathcal{C}'(\mathbb{H}^{n+1})\subseteq \mathcal{C}(\mathbb{H}^{n+1})$ the collection of compact subsets of  $\mathbb{H}^{n+1}$ containing at least two distinct points.
  By the structure of $H(x,y)$ we explained above,  $H(x,y)$ is a compact subset of $\mathbb{H}^{n+1}$ and is continuous for each variable with respect to the Hausdorff metric. Then, by adapting the classical method for proving that  $\mathcal{K}(\mathbb{R}^{n+1})$ is closed (see \cite{schneiderbook}) to the h-convex setting, we  obtain the following lemma.
\begin{lemma}\label{propclosed}
    $\mathcal{H}_{n+1}(\mathbb{H}^{n+1})$ is a closed subset of $(\mathcal{C}'(\mathbb{H}^{n+1}),d_{\mathbb{H}^{n+1}})$.
\end{lemma}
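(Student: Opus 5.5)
To prove Lemma \ref{propclosed}, I will take a sequence $A_i\in\mathcal{H}_{n+1}(\mathbb{H}^{n+1})$ converging in the Hausdorff metric to some $A\in\mathcal{C}'(\mathbb{H}^{n+1})$ and show that $A\in\mathcal{H}_{n+1}(\mathbb{H}^{n+1})$. This follows the classical argument for the closedness of $\mathcal{K}(\mathbb{R}^{n+1})$ in \cite{schneiderbook}, with geodesic segments replaced by the horocycle hulls $H(x,y)$. I use the characterization recalled above: a compact set $\Omega$ is h-convex if and only if $H(x,y)\subseteq\Omega$ for all distinct $x,y\in\Omega$.

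\emph{Step 1: approximating points.} Fix distinct $x,y\in A$. Hausdorff convergence gives points $x_i,y_i\in A_i$ with $x_i\to x$ and $y_i\to y$. For $i$ large we have $x_i\neq y_i$, so h-convexity of $A_i$ yields $H(x_i,y_i)\subseteq A_i$.

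\emph{Step 2: passing to the limit.} By the continuity of $H(\cdot,\cdot)$ in the Hausdorff metric, $H(x_i,y_i)\to H(x,y)$. Take $z\in H(x,y)$. Then there are $z_i\in H(x_i,y_i)\subseteq A_i$ with $z_i\to z$. Since $A_i\to A$, we get $d(z_i,A)\le d_{\mathbb{H}^{n+1}}(A_i,A)\to0$, so $d(z,A)=0$. As $A$ is compact, this gives $z\in A$. Hence $H(x,y)\subseteq A$ for all distinct $x,y\in A$, and $A$ is h-convex.

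\emph{Step 3: the nonempty-interior requirement.} Domains in $\mathcal{H}_{n+1}$ are required to have nonempty interior, so this must be checked for $A$. Since $A\in\mathcal{C}'$, it contains two distinct points $x,y$. I claim $H(x,y)$ already has nonempty interior. In the Poincar\'e ball picture with $x=O$, it is the solid of revolution of the arc $\overset{\large{\frown}}{Oy}$ about the line $\overline{Oy}$. This arc is not contained in the segment $\overline{Oy}$ because it is a genuine circular arc. Its rotation therefore sweeps out a region bounded between two surfaces, and that region contains an open set. Since $H(x,y)\subseteq A$, the set $A$ has nonempty interior.

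The main obstacle is the continuity of $(x,y)\mapsto H(x,y)$ in the Hausdorff metric. I would justify it in the ball model after an isometry sending $x$ to $O$. The bounding arcs depend continuously on $y$, since the circles through $O$ and $y$ tangent to the ideal boundary vary continuously with $y$. The rotation about $\overline{Oy}$ likewise depends continuously on $y$. Continuity in $x$ then follows because isometries of $\mathbb{H}^{n+1}$ depend continuously on the points they move. All of this works because $x\neq y$ stays uniformly away from the degenerate case $x=y$ along the sequence.
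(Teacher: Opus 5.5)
Your Steps 1--2 are the sequential form of the paper's argument. The paper instead shows that $\mathcal{C}'(\mathbb{H}^{n+1})\setminus\mathcal{H}_{n+1}(\mathbb{H}^{n+1})$ is open: it picks $h\in H(x,y)\setminus A$ with $B(h,\varepsilon)\subseteq A^c$ and perturbs $x,y$. In both versions the only real input is the Hausdorff continuity of $H(\cdot,\cdot)$ away from the diagonal, which the paper also just asserts from the ball-model picture. That part of your proposal is fine.

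Step 3 addresses something the paper leaves implicit, but the key claim in it is false. $H(x,y)$ is the set of points lying \emph{on} horocycle segments from $x$ to $y$. In the ball model it is therefore the set swept out by rotating the arc $\overset{\frown}{Oy}$ about $\overline{Oy}$, which is an $n$-dimensional spindle-shaped hypersurface. That set has empty interior in $\mathbb{H}^{n+1}$: rotating a curve never sweeps out a solid. The ``region bounded between surfaces'' you describe is the region \emph{enclosed} by $H(x,y)$, and it is not part of $H(x,y)$. So $H(x,y)\subseteq A$ alone gives no interior points.

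The conclusion is still true, but it needs one more step. Rerun your Steps 1--2 with the geodesic segments $\overline{x_iy_i}$ in place of $H(x_i,y_i)$. Each $A_i$ is geodesically convex, since h-convex sets are intersections of horoballs, which are convex. Hence the Hausdorff limit $A$ is geodesically convex. The spindle $H(x,y)$ lies in no totally geodesic hyperplane: its off-axis points rotate around $\overline{xy}$, and for $n=1$ it consists of two circular arcs. It therefore contains $n+2$ points in general position. In the Klein model, $A$ contains the nondegenerate Euclidean simplex they span, so $A$ has nonempty interior.
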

\begin{proof}
    We will prove that $\mathcal{C}'(\mathbb{H}^{n+1})\setminus\mathcal{H}_{n+1}(\mathbb{H}^{n+1})$ is open.
    
    Let $A\in\mathcal{C}'(\mathbb{H}^{n+1})\setminus\mathcal{H}_{n+1}(\mathbb{H}^{n+1})$. In the following we prove that there exists a neighborhood of $A$ which is contained in $\mathcal{C}'(\mathbb{H}^{n+1})\setminus\mathcal{H}_{n+1}(\mathbb{H}^{n+1})$.  By $A\in\mathcal{C}'(\mathbb{H}^{n+1})\setminus\mathcal{H}_{n+1}(\mathbb{H}^{n+1})$,   there exists $x,y\in A, x\neq y$ such that $H(x,y)\cap A^c\neq \emptyset$. Choose $h\in H(x,y)\cap A^c$. Noting that $A^c$ is a open subset of $\mathbb{H}^{n+1}$, there exists $\varepsilon>0$ such that $B (h,\varepsilon)\subseteq A^c$. By the continuity of $H(x,y)$, there exists $\delta>0$ such that for any distinct $x',y'$ satisfying $d_{\mathbb{H}^{n+1}}(x',x)<\delta$ and $d_{\mathbb{H}^{n+1}}(y',y)<\delta$ we have $d_{\mathbb{H}^{n+1}}(H(x,y),H(x',y'))< \frac{\varepsilon}{3}$. Let $\delta'=\mathrm{min}\{\frac{\epsilon}{3},\delta, \frac{d_{\mathbb{H}^{n+1}}(x,y)}{2}\}$. Then for any $A'\in \mathcal{C}(\mathbb{H}^{n+1})$ satisfying $d_{\mathbb{H}^{n+1}}(A,A')< \delta'$, by the definition of Hausdorff metric, there exist $x',y'\in A'$ such that $d_{\mathbb{H}^{n+1}}(x',x)<\delta'$, $d_{\mathbb{H}^{n+1}}(y',y)<\delta'$ and $x'\neq y'$.  Besides we have $$d_{\mathbb{H}^{n+1}}(H(x,y),H(x',y'))< \frac{\varepsilon}{3},$$ from which we know that there exists $h'\in H(x',y')$ such that $d_{\mathbb{H}^{n+1}}(h,h')<\frac{\varepsilon}{3}$. Noticing that $$d_{\mathbb{H}^{n+1}}(A,A')< \frac{\varepsilon}{3},$$ we have $h'\in A'^c$. Hence $A'\notin \mathcal{H}_{n+1}(\mathbb{H}^{n+1})$. 
\end{proof}
By Lemma \ref{propclosed} we obtain the following Blaschke selection type theorem.
\begin{thm}
   For any bounded sequence $\{A_i, i\in\mathbb{N}\}\subset \mathcal{H}_{n+1}(\mathbb{H}^{n+1})$ whose elements  have a uniform positive lower bound on inner radius, there is a subsequence which converges to $A\in \mathcal{H}_{n+1}(\mathbb{H}^{n+1})$.\label{hausdorffofhconvex}
\end{thm}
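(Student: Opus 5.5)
The plan is to combine Lemma \ref{propclosed} with the compactness of bounded families of compact sets in $\mathbb{H}^{n+1}$. Since $(\mathcal{C}(\mathbb{H}^{n+1}),d_{\mathbb{H}^{n+1}})$ is complete and closed bounded subsets of it are compact, and the sequence $\{A_i\}$ is bounded, we may fix a point $o$ and a radius $R$ with $A_i\subseteq B(o,R)$ for all $i$. The family $\{K\in\mathcal{C}(\mathbb{H}^{n+1}):K\subseteq B(o,R)\}$ is closed and bounded in the Hausdorff metric, hence compact. So a subsequence, still denoted $A_i$, converges in the Hausdorff metric to some nonempty compact set $A\subseteq B(o,R)$.

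Next I show that $A$ has nonempty interior, which in particular puts $A$ in $\mathcal{C}'(\mathbb{H}^{n+1})$. Let $\rho_0>0$ be the uniform lower bound of $\rho_-(A_i)$. For each $i$, choose $p_i$ with $B(p_i,\rho_0/2)\subseteq A_i$. The centers lie in the compact set $\overline{B(o,R)}$, so after passing to a further subsequence $p_i\to p$. I then claim $B(p,\rho_0/4)\subseteq A$. Take $z$ with $d(z,p)\le\rho_0/4$. For large $i$ we have $d(z,p_i)<\rho_0/2$, so $z\in A_i$, and therefore $d(z,A)\le d_{\mathbb{H}^{n+1}}(A_i,A)\to0$. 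Since $A$ is closed, $z\in A$. Thus $A$ contains a geodesic ball, so it has at least two points and nonempty interior.

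Finally, each $A_i$ lies in $\mathcal{H}_{n+1}(\mathbb{H}^{n+1})\subseteq\mathcal{C}'(\mathbb{H}^{n+1})$, and $A_i\to A$ with $A\in\mathcal{C}'(\mathbb{H}^{n+1})$. By Lemma \ref{propclosed}, $\mathcal{H}_{n+1}(\mathbb{H}^{n+1})$ is closed in $\mathcal{C}'(\mathbb{H}^{n+1})$, so $A\in\mathcal{H}_{n+1}(\mathbb{H}^{n+1})$.

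The only subtle point is that the limit must stay in $\mathcal{C}'$. Without the inner radius bound the sequence could collapse to a single point, where Lemma \ref{propclosed} gives nothing. The inner-ball argument above is exactly what rules this out, and it also gives the nonempty interior required of a domain.
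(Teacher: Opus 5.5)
Your proposal is correct and follows the same route as the paper: extract a Hausdorff-convergent subsequence by compactness of bounded families in $\mathcal{C}(\mathbb{H}^{n+1})$, use the uniform inner-radius bound to keep the limit in $\mathcal{C}'(\mathbb{H}^{n+1})$, and conclude with Lemma \ref{propclosed}. Your explicit argument that the limit contains $B(p,\rho_0/4)$ spells out the step the paper only asserts, and it also gives the limit nonempty interior directly; the only thing to tidy is to take the containing ball closed, $\overline{B(o,R)}$, so that the family of compact subsets of it is genuinely closed in the Hausdorff metric.
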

\begin{proof}
    Suppose that $\{A_i, i\in\mathbb{N}\}\subseteq \mathcal{H}_{n+1}(\mathbb{H}^{n+1})$ is bounded whose elements have a uniform positive lower bound on inner radius. By the boundedness of $\{A_i, i\in\mathbb{N}\}$,  there exists a subsequence
    $\{A_{i_k}\}$ that converges to some $A\in \mathcal{C}(\mathbb{H}^{n+1})$. Due to each element of $\{A_{i_k}\}$ has a uniform lower bound on inner radius, we have $A\in\mathcal{C}'(\mathbb{H}^{n+1})$. By Lemma \ref{propclosed}, we have $A\in \mathcal{H}_{n+1}(\mathbb{H}^{n+1})$.
\end{proof}

\subsection{Generalized principal curvatures of h-convex domain}

 Next we show that, as in the smooth case, the generalized principal curvatures of the boundary of any h-convex domain are bounded below by $1$. 
 In Euclidean space, it is known that any parallel set of a convex bodies is convex. We prove that in hyperbolic space any parallel set of an h-convex body is h-convex.
\begin{lemma}
    Suppose that $A\in\mathcal{H}_{n+1}(\mathbb{H}^{n+1})$. Then for any $\varepsilon>0$, we have $A^\varepsilon\in \mathcal{H}_{n+1}(\mathbb{H}^{n+1})$. 
\end{lemma}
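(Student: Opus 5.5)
We use the characterization of h-convexity as intersection of horoballs: a compact set $A$ is h-convex iff $A=\bigcap\{\overline{B}:B \text{ a horoball with } A\subseteq \overline{B}\}$, equivalently iff through every boundary point $p\in\partial A$ there passes a supporting horosphere, i.e. a closed horoball containing $A$ whose boundary contains $p$. The plan is to write $A=\bigcap_{\lambda}\overline{B}_\lambda$ over all closed horoballs $\overline{B}_\lambda\supseteq A$. We then show that $A^\varepsilon=\bigcap_\lambda (\overline{B}_\lambda)^\varepsilon$, and that each $(\overline{B}_\lambda)^\varepsilon$ is again a closed horoball. The second fact is elementary: in the upper half-space model, with the horoball $\{x_{n+1}\ge 1\}$ centered at $\infty$, the $\varepsilon$-parallel set is $\{x_{n+1}\ge e^{-\varepsilon}\}$, which is the concentric horoball. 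Here we use that the hyperbolic distance from a point to the horosphere $\{x_{n+1}=1\}$ is $|\log x_{n+1}|$, realized along vertical geodesics. By isometry invariance this holds for every horoball.

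For the set identity, the inclusion $A^\varepsilon\subseteq\bigcap_\lambda(\overline{B}_\lambda)^\varepsilon$ is immediate from $A\subseteq \overline{B}_\lambda$. For the reverse inclusion, take $x\notin A^\varepsilon$, so $d(x,A)=\delta>\varepsilon$. Let $p=f_A(x)\in\partial A$ be the nearest point, which is unique since $A$ is convex. The key step is to produce a horoball $\overline{B}\supseteq A$ with $p\in\partial B$ whose inner normal at $p$ is the direction of the geodesic from $x$ to $p$. Then $x$ lies on the outward normal geodesic of the horosphere at a distance $\delta$ from $p$, so $d(x,\overline{B})=\delta>\varepsilon$ and $x\notin\overline{B}^\varepsilon$.

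To get this horoball, first note that $B(x,\delta)$ touches $A$ at $p$ from outside. For $t\to\infty$, consider the geodesic balls $B(y_t,t)$, where $y_t$ is the point on the geodesic from $x$ through $p$ beyond $p$ at distance $t$ from $p$. These balls increase to the horoball $\overline{B}$ tangent at $p$ with inner normal pointing along $\overrightarrow{xp}$. We must show $A\subseteq\overline{B}$. Suppose instead that some $q\in A$ lies strictly outside $\overline{B}$. Since $A$ is h-convex, all horocycle arcs from $p$ to $q$ lie in $A$. A horocycle arc from $p$ to $q$ leaves $p$ in a direction making an acute angle with the outward normal $\overrightarrow{px}$; this is the key geometric claim, to be checked in the Poincar\'e model using the description of $H(p,q)$ given above. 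Moreover its curvature is $1$, so near $p$ it lies inside $B(x,\delta)$, whose boundary has curvature $\coth\delta>1$. This contradicts $d(x,A)=\delta$. Hence $A^\varepsilon=\bigcap_\lambda(\overline{B}_\lambda)^\varepsilon$ is an intersection of horoballs and therefore h-convex. Compactness and nonempty interior are clear.

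The main obstacle is this supporting-horoball step at the nearest point. The cleanest route is to invoke the known equivalence (Borisenko–Miquel) that h-convexity is equivalent to the existence, at every boundary point and for every outer normal in the normal cone, of a supporting horosphere. If we prefer a self-contained argument, we would carry out the comparison with horocycles in $H(p,q)$ explicitly, reducing to a 2-dimensional totally geodesic plane containing $x$, $p$, $q$ and comparing curvature $1$ arcs with the circle of curvature $\coth\delta$.
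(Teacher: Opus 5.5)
Your proposal is correct and is essentially the paper's proof. The decisive step is the same: the horoball through $f_A(x)$ with outer normal $F_A(x)$ contains $A$, since otherwise some arc of $H(f_A(x),q)\subseteq A$ would enter the open ball about $x$. Your identity $A^\varepsilon=\bigcap_\lambda(\overline{B}_\lambda)^\varepsilon$ is only an equivalent repackaging of the paper's remark that $B_1^\varepsilon$ supports $A^\varepsilon$ at $x$.

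One correction: drop the curvature sentence. It is backwards, because a curve of curvature $1$ tangent at $p$ to a sphere of curvature $\coth\delta>1$ lies \emph{outside} that ball near $p$. It is also unnecessary, because a strictly acute initial angle with $\overrightarrow{px}$ already forces the arc into $B(x,\delta)$.

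Such an arc exists exactly when $q\notin\overline{B}$. Put $p=0$ in the Poincar\'e ball with $x$ in direction $e$. The arcs lie on circles of center $\xi/2$ and radius $1/2$, where $|\xi|=1$ and $\xi\cdot q=|q|^2$, and their initial direction is $q-|q|^2\xi$. Choosing $\xi$ to minimize $\xi\cdot e$ gives this direction a positive $e$-component if and only if $|q|^2+q\cdot e>0$. That is precisely the condition for $q$ to lie outside $\overline{B}$, which corresponds to $\{|y+e/2|\le 1/2\}$.
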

\begin{proof}
It is known that a compact domain $\Omega$ is h-convex if and only if for any $x\in\partial \Omega$ there exists a supporting horoball at $x$.  Suppose that $A\in\mathcal{H}_{n+1}(\mathbb{H}^{n+1})$. For any $x\in\partial A^\varepsilon$, we have $f_A(x)\in\partial A$. Let $B_1$ be the horoball such that $f_A(x)\in\partial B_1$ and  $F_A(x)$ is the unit outward normal of $\partial B_1$ at $f_A(x)$. We first prove that $A\subseteq B_1$. Now we work in the Poincar\'{e} ball model of $\mathbb{H}^{n+1}$ and take $f_A(x)$ to be the origin $O$ of the unit ball. Let $x'$ be the point obtained by intersecting the ray from $x$ to $f_A(x)$ with the boundary of the unit ball. Viewed in the Poincar\'{e} ball model, $B_1\cup x'$ is a Euclidean ball with the segment $f_A(x)x'$ as its diameter. Denote by $B_2$ the hyperbolic geodesic ball with $x$ as its center and $\varepsilon$ as its radius. Viewed in the Poincar\'{e} ball model, $B_2$ is  a Euclidean ball such that $x\in \mathring{B_2}$ and $B_2$ is externally tangent to $B_1$ at $f_A(x)$. If $A\not\subseteq B_1$, then there exists a point $y$ such that $y\in A$ and $y\not\in B_1$.  Then by the h-convexity of $A$, the horosegments set $H(f_A(x),y)\subseteq A$. By the geometric description of $H(f_A(x),y)$ and of horoballs in the Poincar\'{e} ball model, it follows that $H(f_A(x),y)\cap \mathring{B_2}\neq \emptyset$, which leads to a contradiction to the definition of $f_A(x)$.

%Denote by $r_2(t)$ the geodesic from $f_A(x)$ to $y$. Since $A$ is h-convex, we have $\overline{f_A(x)y}\subseteq A$. Moreover, there holds $\overline{f_A(x)y}\cap B=f_A(x)$. Then by the definition of $f_A(x)$, we have $ \langle \dot{r}(0), \dot{r}_2(0)\rangle_{\mathbb{H}^{n+1}}\geq 0$. Let $b_r$ be the Busemann function corresponding the geodesic $r$. By \cite{Heintze1977}, we have $\dot{b}_r(0)=-\dot{r}(0)=F_A(x)$. Hence 

It is known that  $B_1^\varepsilon$ is also a horoball. Moreover, since $B_1$ supports $A$ at $f_A(x)$ and $F_A(x)$ is a normal vector of $\partial B_1$ at $f_A(x)$, we can obtain that $B_1^\varepsilon$ supports $A^\varepsilon$ at $x$.  Hence we have $A^\varepsilon\in\mathcal{H}_{n+1}(\mathbb{H}^{n+1})$.
 \end{proof}
 
 In \cite{Walter1974}, Walter proved that $\partial A^\varepsilon$ is a $C^1$ hypersurface and $f_A$ and $F_A$ are twice differentiable almost everywhere on $\partial A^\varepsilon$.
For any $p\in \Phi_\varepsilon(\tilde{\mathcal{N}}_A)\subseteq \partial A^\varepsilon$, $\partial A^\varepsilon$ is twice differentiable at $p$. By a classical differential geometry argument,  the principal curvatures of $\partial A^\varepsilon$ are bounded below by $1$ at $p$. 
By \cite[Theorem 2.7]{Kohlmann1991} we have \begin{align*}
    \kappa_i(p)=\frac{\cosh(\varepsilon) \kappa_i(F_A(p))+ \sinh(\varepsilon)}{\sinh(\varepsilon) \kappa_i(F_A(p))+\cosh(\varepsilon)},
\end{align*}
As $\varepsilon$ tends to $0^+$,  we obtain the following lemma. \begin{lemma}
    \label{gepricuofh}
    For any $A\in\mathcal{H}_{n+1}(\mathbb{H}^{n+1})$ we have  $$\kappa_i(v)\geq 1, \quad \forall v\in\tilde{\mathcal{N}}_A.$$
\end{lemma}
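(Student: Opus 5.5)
The plan is to deduce the statement from the smooth parallel hypersurfaces $\partial A^\varepsilon$ by passing to the limit $\varepsilon\to0^+$ in Kohlmann's transformation formula quoted just above.

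Fix $v\in\tilde{\mathcal N}_A$ and $\varepsilon>0$, and set $p=\Phi_\varepsilon(v)\in\partial A^\varepsilon$. Then $F_A(p)=v$, and $p$ is a point where $\partial A^\varepsilon$ is twice differentiable. First I would show that the principal curvatures $\kappa_i(p)$ of $\partial A^\varepsilon$ at $p$ satisfy $\kappa_i(p)\ge 1$. By the preceding lemma, $A^\varepsilon$ is h-convex. Moreover, its proof produces a horoball $B_1^\varepsilon$ that supports $A^\varepsilon$ at $p$, contains $A^\varepsilon$, and has outward normal $N(p)$ at $p$. Since $\partial A^\varepsilon$ is $C^1$ and twice differentiable at $p$, it has a second-order Taylor expansion there as a graph over $T_p\partial A^\varepsilon$, and it lies inside the horosphere $\partial B_1^\varepsilon$, which is tangent to it at $p$. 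The standard comparison of second fundamental forms for tangent hypersurfaces with one enclosing the other then gives $\langle DN(u),u\rangle\ge 1$ for all unit $u$, because the horosphere's second fundamental form is the identity. In particular every eigenvalue satisfies $\kappa_i(p)\ge1$.

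Next I would invert Kohlmann's relation. Write $x=\kappa_i(v)\in[0,\infty]$ (it is nonnegative by convexity) and $y=\kappa_i(p)$. From
$$y=\frac{\cosh\varepsilon\, x+\sinh\varepsilon}{\sinh\varepsilon\, x+\cosh\varepsilon}$$
we get, when $x<\infty$,
$$y-1=\frac{(x-1)(\cosh\varepsilon-\sinh\varepsilon)}{\sinh\varepsilon\, x+\cosh\varepsilon}.$$
The denominator is positive and $\cosh\varepsilon-\sinh\varepsilon=e^{-\varepsilon}>0$, so $y\ge1$ forces $x\ge1$. If $x=\infty$ there is nothing to prove. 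Note that this already gives the conclusion at every fixed $\varepsilon$, so no genuine limit is needed, although the definition \eqref{geprincur} through $\varepsilon\to0^+$ is consistent with it.

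The main obstacle is the first step: justifying the comparison at a point of mere twice-differentiability rather than $C^2$ regularity. The plan is to use the pointwise Taylor expansion $\partial A^\varepsilon=\{\,w+\tfrac12 II_p(w,w)\nu+o(|w|^2)\,\}$ in normal coordinates. This is compared with the corresponding expansion of the horosphere, $\tfrac12|w|^2+O(|w|^3)$, using the inclusion $A^\varepsilon\subseteq B_1^\varepsilon$. Letting $w\to0$ along each direction yields $II_p(w,w)\ge |w|^2$.
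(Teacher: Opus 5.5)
Your proposal is correct and follows the paper's own route. In both, the h-convexity of $A^\varepsilon$ (via the supporting horoball $B_1^\varepsilon$) gives principal curvatures $\ge 1$ at twice-differentiable points of $\partial A^\varepsilon$, and Kohlmann's transformation formula transfers this to $\kappa_i(v)$. Your Taylor-expansion comparison spells out the paper's ``classical differential geometry argument,'' and your identity $y-1=(x-1)e^{-\varepsilon}/(x\sinh\varepsilon+\cosh\varepsilon)$ shows that the paper's passage $\varepsilon\to0^+$ is unnecessary, since a single admissible $\varepsilon$ suffices.
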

Combining \eqref{relationsbetweenge} and Lemma \ref{gepricuofh}, we immediately obtain  that for any $k=0,\dots,n$, $\tilde{V}_k(A,\cdot)$ is a Borel measure on $\mathbb{H}^{n+1}$.

Now we consider the  generalized version of the Minkowski integral formulas for shifted principal curvatures. For any $q\in \mathbb{H}^{n+1}$, let $$S_q(v):=\sinh{\left(d_q(\Pi(v))\right)}\left\langle v, \operatorname{grad} d_q(\Pi(v))\right\rangle_{\mathbb{H}^{n+1}}, v\in TM, ||v||=1.$$ 
 The following theorem was proved by Kohlmann \cite[Theorem 3.4]{Kohlmann1994}.
\begin{thm}{\cite{Kohlmann1994}}\label{thmkohlmin}
    For any $A \in \mathcal{H}_{n+1}(\mathbb{H}^{n+1})$ and $r \in\{1, \ldots n\}$,  we have
$$\int_{\mathcal{N}_A} g(v)\left((\cosh d_q(\Pi(v)) E_{r-1}(v)-S_q(v) E_r(v)\right) \mathrm{d} \mathcal{L}^n v=0.$$
\end{thm}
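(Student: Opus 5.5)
The plan is to derive the formula from the classical Minkowski identity for smooth hypersurfaces and pass to the limit. Since the integrand involves the normal direction through $S_q(v)$, the limit has to be taken at the level of the unit normal bundle, not only of the footpoints.

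\emph{Step 1 (smooth case).} Let $\partial\Omega$ be a smooth closed convex hypersurface. The vector field $V=\sinh d_q\,\overline{\nabla} d_q$ is a closed conformal field with $\overline{\nabla}V=\cosh d_q\,\langle\cdot,\cdot\rangle_{\mathbb{H}^{n+1}}$. Let $V^T$ be its tangential part and $T_{r-1}$ the $(r-1)$-th Newton tensor of $\partial\Omega$. In a space form $T_{r-1}$ is divergence free by Codazzi, so
\begin{align*}
\operatorname{div}\big(T_{r-1}(V^T)\big)=\operatorname{tr}(T_{r-1})\cosh d_q-\operatorname{tr}(T_{r-1}\mathcal{W})\,\langle V,\nu\rangle_{\mathbb{H}^{n+1}}.
\end{align*}
Using $\operatorname{tr}T_{r-1}=(n-r+1)\binom{n}{r-1}E_{r-1}$ and $\operatorname{tr}(T_{r-1}\mathcal{W})=r\binom{n}{r}E_r$, the two coefficients agree. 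Integrating over $\partial\Omega$ gives
\begin{align*}
\int_{\partial\Omega}\big(\cosh d_q\,E_{r-1}-\sinh d_q\langle\nu,\overline{\nabla} d_q\rangle E_r\big)\,d\mu=0.
\end{align*}
The unit normal field $N$ maps $\partial\Omega$ bijectively onto $\mathcal{N}_\Omega$ with Jacobian factor $g$, as recalled in Section~\ref{plementaryofgpc}. Hence this identity is exactly the claimed formula for smooth $\Omega$.

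\emph{Step 2 (approximation).} Given $A\in\mathcal{H}_{n+1}(\mathbb{H}^{n+1})$, choose smooth (strictly) h-convex bodies $A_i\to A$ in the Hausdorff metric, for instance by smoothing the parallel bodies $A^{1/i}$, which are h-convex by the lemma above. I would then use the normal-bundle version of Theorem~\ref{weakcon}: the measures $\Theta_j(A_i,\cdot):=\int_{\mathcal{N}_{A_i}\cap\,\cdot}\,g\,E_j\,d\mathcal{L}^n$ on the unit sphere bundle converge weakly to $\Theta_j(A,\cdot)$ for every $j=0,\dots,n$. Kohlmann proves Theorem~\ref{weakcon} by a Steiner formula for $\mathrm{Vol}\big(\{x: 0<d(x,A)\le\varepsilon,\ (f_A(x),F_A(x))\in\eta\}\big)$ with $\eta$ a Borel set of the sphere bundle, i.e.\ a local parallel-volume formula. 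Together with the continuity of the metric projection under Hausdorff convergence, the same argument gives convergence of these support measures. The functions $v\mapsto\cosh d_q(\Pi(v))$ and $v\mapsto S_q(v)$ are continuous on the unit sphere bundle and bounded on a neighborhood of $A$. So the identity for $A_i$ passes to the limit and yields the formula for $A$.

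\emph{Alternative route to Step 2.} One can avoid smoothing and work directly with $A^\varepsilon$. Its boundary is $C^{1,1}$ and twice differentiable a.e., and the Minkowski identity holds on it by Lipschitz integration by parts. One then pulls back to $\mathcal{N}_A$ via $\Phi_\varepsilon$, using the curvature transformation $\kappa_i(p)=\frac{\cosh\varepsilon\,\kappa_i+\sinh\varepsilon}{\sinh\varepsilon\,\kappa_i+\cosh\varepsilon}$ and the area factor $\prod_i(\cosh\varepsilon+\sinh\varepsilon\,\kappa_i)\,g(v)$. This turns the integrand into a combination of the $gE_j$ with continuous coefficients. Letting $\varepsilon\to0^+$ with dominated convergence (all $gE_j$ are bounded by $1$) recovers the identity on $\mathcal{N}_A$.

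\emph{Main obstacle.} The delicate step is the weak convergence of the support (normal-bundle) measures $\Theta_j$, since Theorem~\ref{weakcon} is stated only for the footpoint measures. I would handle it first by checking that Kohlmann's local Steiner formula~\eqref{steiner} holds verbatim with $f_A^{-1}(\beta)$ replaced by $\{x: (f_A(x),F_A(x))\in\eta\}$. Then I would argue as in \cite{Kohlmann1994}: approximate the parallel volumes of $A_i$ by those of $A$ using continuity of $(f_A,F_A)$ in $A$, and invert the Steiner polynomial in $\varepsilon$.
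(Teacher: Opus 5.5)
The paper does not prove this statement; it imports it verbatim from Kohlmann \cite{Kohlmann1994}. Your proposal therefore supplies a proof where the text has only a citation, and your route is sound.

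Step~1 is the standard Minkowski formula; the coefficients $(n-r+1)\binom{n}{r-1}$ and $r\binom{n}{r}$ do coincide. The transfer to $\mathcal{N}_\Omega$ is right once stated precisely: it is $\Pi|_{\mathcal{N}_\Omega}$ that has Jacobian $g$, i.e.\ $d\mu=g\,d\mathcal{L}^n$, so $N$ itself has Jacobian $1/g$.

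You also correctly see that Theorem~\ref{weakcon} cannot be used as stated, because $S_q$ depends on $v$ and not only on $\Pi(v)$. Your remedy works, for three reasons:
\begin{enumerate}
\item The area-formula computation for $(v,t)\mapsto\Phi_t(v)$ on $\mathcal{N}_A\times(0,\varepsilon]$ that yields \eqref{steiner} gives the same polynomial for $\{x:0<d(x,A)\le\varepsilon,\ F_A(x)\in\eta\}$, for any Borel set $\eta$ of the sphere bundle.
\item The corresponding integrals against continuous test functions are continuous in $A$. This follows from pointwise convergence of $(f_{A_i},F_{A_i})$ off $A$ and dominated convergence.
\item $\ell_1,\dots,\ell_{n+1}$ are linearly independent, so the Steiner polynomial can be inverted.
\end{enumerate}

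Two minor corrections. First, h-convexity plays no role here, since the identity holds for every convex body. Smoothing in $\mathbb{H}^{n+1}$ while preserving h-convexity is not a routine operation, and it is unnecessary. Take instead smooth convex approximants, for example Euclidean smoothings in the Klein model, where hyperbolic and Euclidean convexity coincide.

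Second, in your alternative route the field $T_{r-1}(V^T)$ on the $C^{1,1}$ surface $\partial A^\varepsilon$ is only $L^\infty$ for $r\ge 2$. So ``Lipschitz integration by parts'' really requires the weak divergence-free property of the Newton tensor for $W^{2,\infty}$ graphs, obtained by mollification; this is another approximation argument in disguise. The subsequent pull-back via $\Phi_\varepsilon$ and the limit $\varepsilon\to0^+$ are fine, since $0\le gE_j\le 1$ and $\mathcal{L}^n(\mathcal{N}_A)<\infty$.
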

Arguing as in \cite[Lemma 2.6]{HuLiWei2022} and combining the Theorem \ref{thmkohlmin} with \eqref{relationsbetweenge}, we obtain a modified  version of the  generalized Minkowski integral formulas for shifted principal curvatures.
\begin{thm}\label{shiftedminkow}
For any $A \in \mathcal{H}_{n+1}(\mathbb{H}^{n+1})$ and $r \in\{1, \ldots, n\}$,  we have
$$\int_{\mathcal{N}_A} g(v)\left((\cosh{\left(d_q(\Pi(v))-S_q(v)\right)} \tilde{E}_{r-1}(v)-S_q(v) \tilde{E}_r(v)\right) \mathrm{d} \mathcal{L}^n v=0.$$
\end{thm}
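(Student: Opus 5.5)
The plan is to reduce the formula in Theorem \ref{shiftedminkow} to Kohlmann's formula, Theorem \ref{thmkohlmin}, by expanding each shifted quantity $g(v)\tilde E_k(v)$ through \eqref{relationsbetweenge}. The result then follows from Pascal's rule for binomial coefficients. Fix $q\in\mathbb{H}^{n+1}$ and introduce the abbreviations
\[
a_\ell:=\int_{\mathcal{N}_A} g(v)\cosh\bigl(d_q(\Pi(v))\bigr)E_\ell(v)\,\mathrm{d}\mathcal{L}^n v,\qquad
b_\ell:=\int_{\mathcal{N}_A} g(v)S_q(v)E_\ell(v)\,\mathrm{d}\mathcal{L}^n v,
\]
for $0\le \ell\le n$.

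First, I check that all these integrals are finite, so that the linear manipulations below are legitimate. By the discussion preceding Lemma \ref{gepricuofh}, each $g(v)E_\ell(v)$ is a measurable, nonnegative real number, and its integral over $\mathcal{N}_A$ equals $\binom{n}{\ell}^{-1}\binom{n}{n-\ell}^{-1}$ times the finite quantity $V_{n-\ell}(A)$, up to the constant $\binom{n}{\ell}$ normalisation. Since $A$ is compact, $\cosh d_q$ and $|S_q|\le \sinh d_q$ are bounded on $\Pi(\mathcal{N}_A)\subseteq\partial A$. Hence every $a_\ell$ and $b_\ell$ is finite, and by \eqref{relationsbetweenge} (with Lemma \ref{gepricuofh} guaranteeing that the shifted quantities are well defined) so are the integrals involving $\tilde E_k$. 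In this notation, Theorem \ref{thmkohlmin} reads $a_{r-1}=b_r$ for $r=1,\dots,n$.

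Next, I insert \eqref{relationsbetweenge} into the three integrals appearing in the claim.
\begin{align*}
\int g\cosh(d_q)\tilde E_{r-1}
&=\sum_{\ell=0}^{r-1}(-1)^{r-1-\ell}\binom{r-1}{\ell}a_\ell
=\sum_{m=1}^{r}(-1)^{r-m}\binom{r-1}{m-1}b_m,\\
\int g S_q\tilde E_{r-1}
&=-\sum_{m=0}^{r-1}(-1)^{r-m}\binom{r-1}{m}b_m,\\
\int g S_q\tilde E_r
&=\sum_{m=0}^{r}(-1)^{r-m}\binom{r}{m}b_m.
\end{align*}
In the first line I used $a_\ell=b_{\ell+1}$ and set $m=\ell+1$. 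This step is where the range $\ell\le r-1\le n-1$ matters, since it keeps every index within the range covered by Theorem \ref{thmkohlmin}.

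Finally, I subtract the second and third lines from the first. With the conventions $\binom{r-1}{-1}=\binom{r-1}{r}=0$, the coefficient of $(-1)^{r-m}b_m$ is
\[
\binom{r-1}{m-1}+\binom{r-1}{m}-\binom{r}{m}=0
\]
for every $0\le m\le r$, by Pascal's rule. This gives the claimed identity.

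The only delicate point is the bookkeeping at the points of $\tilde{\mathcal N}_{A,r}$ with $r\ge1$, where some generalized curvatures are infinite. This is handled entirely by \eqref{relationsbetweenge}, which holds pointwise on all of $\tilde{\mathcal N}_A$. The argument parallels \cite[Lemma 2.6]{HuLiWei2022} in the smooth case.
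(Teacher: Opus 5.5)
Your proposal is correct and is essentially the paper's argument: expand each $g\tilde E_k$ via \eqref{relationsbetweenge}, use Kohlmann's formula (Theorem~\ref{thmkohlmin}) to trade $\int g\cosh d_q\,E_\ell$ for $\int g\,S_qE_{\ell+1}$, and cancel the coefficients by Pascal's rule, as in \cite[Lemma 2.6]{HuLiWei2022}. You also rightly read the integrand as $\bigl(\cosh d_q(\Pi(v))-S_q(v)\bigr)\tilde E_{r-1}(v)-S_q(v)\tilde E_r(v)$, which is the intended meaning; the parenthesis in the displayed statement is misplaced, and the literal $\cosh(d_q-S_q)$ version already fails for a ball centred at $q$.
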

\subsection{Modified curvature measures for h-convex domains} In the following we  characterize  geodesic balls via modified curvature measures.
We adapt the technique of Kohlmann \cite{Kohlmann1998}  to prove Theorem \ref{modifiedcurvature}. The Lemma \ref{gepricuofh} and Theorem \ref{shiftedminkow} ensure that certain terms in the proof below are nonnegative.
\begin{proof}[proof of Theorem \ref{modifiedcurvature}]
Suppose that $k$ belongs to $\{2, \ldots, n\}$ and let $c>0$ be a constant. Suppose that $A\in\mathcal{H}_{n+1}(\mathbb{H}^{n+1})$ satisfying $$\tilde{V}_{n-k}(A, \beta)=c\tilde{V}_n(A, \beta),\quad \text{for any Borel set}\, \beta \in \mathcal{B}\left(\mathbb{H}^{n+1}\right),$$ i.e.,
\begin{align}
    \int_{\Pi^{-1}(\beta)\cap \mathcal{N}_A}g(v) \tilde{E}_k(v)d\mathcal{L}v=a^k\int_{\Pi^{-1}(\beta)\cap \mathcal{N}_A}g(v)d\mathcal{L}v,\label{tiaojian}
\end{align} 
here $a^k:=\binom{n}{k}^{-1}c$.
By Lemma \ref{gepricuofh}, we have \begin{align}
    \kappa_i(v)\geq 1,\quad \forall v\in\tilde{\mathcal{N}}_A, \,i=1,\dots, n\label{kia}
\end{align}
We first derive 
\begin{align}
   g(v), g(v)\tilde{E}_k(v)=0, \quad \text{a.e. in}\, \tilde{\mathcal{N}}_A\setminus\tilde{\mathcal{N}}_{A,0}.\label{gvek}
\end{align}
Proof of (\ref{gvek}):  
By the definition of $g$, we immediately obtain $g(v)=0$ for any $v\in\tilde{\mathcal{N}}_A\setminus\tilde{\mathcal{N}}_{A,0}$. If there exists a measurable set $B\subseteq \tilde{\mathcal{N}}_A\setminus\tilde{\mathcal{N}}_{A,0}$ such that $\mathcal{L}^n(B)>0$ and $g(v)\tilde{E}_k(v)>0$ on $B$. We have 
\begin{align}
    0<\int_{B}g(v) \tilde{E}_k(v)d\mathcal{L}^nv. \label{contra1}
\end{align}
By the convexity of $A$, we have that $\partial A$ is $\mathcal{L}^n$-almost everywhere twice differentiable, from which we have $\mathcal{L}^n(\Pi(B))=0$.  By (\ref{Vn}) and (\ref{contra1}), we have $$0=c\tilde{V}_n(A,\Pi(B))=\tilde{V}_{n-k}(A,\Pi(B))\geq\int_{B}g(v) \tilde{E}_k(v)d\mathcal{L}^nv> 0,$$ which leads to a contradiction. Hence we complete the proof of (\ref{gvek}).

We next prove 
\begin{align}
    \tilde{E}_k(v)=a^k \quad \text{for $\mathcal{L}^n$-almost every} \,v \in \tilde{\mathcal{N}}_{A,0}.\label{Ek}
\end{align}
Proof of (\ref{Ek}):
If there exists a measurable set $B\subseteq \tilde{\mathcal{N}}_{A,0}$ such that $\mathcal{L}^n(B)>0$ and $g(v)\tilde{E}_k(v)>a^k$ on $B$.
By argument in subsection \ref{plementaryofgpc}, the restriction $\Pi|_B$ is an injection and $g(v)>0$ on $B$. Hence by coarea formula in \cite{federe} we have 
\begin{align*}
    0=c\tilde{V}_n(A,\Pi(B))-\tilde{V}_{n-k}(A,\Pi(B))=\binom{n}{k}^{-1}\int_B g(v)(a^k-\tilde{E}_k(v))d\mathcal{L}^n v>0,
\end{align*} 
which leads to a contradiction.

By the definition of $g$, we have 
\begin{align}
    g(v)\tilde{E}_r(v)=0, \quad \text{for all} \,v\in \bigcup\limits_{i=r+1}\limits^{n}\tilde{\mathcal{N}}_{A,i}.\label{gver}
\end{align}
In the following, we  prove that $\kappa_i(v)=a+1$, $i=1,\dots,n$ for $\mathcal{L}^n$-almost every $v\in\tilde{\mathcal{N}}_{A,0}$ by considering the modified Minkowski formula in Theorem \ref{shiftedminkow}. 

Let $q\in \mathring{A}$. For all $\, v\in\mathcal{N}_A$, by \cite{Kohlmann1998} we have 
\begin{align}
    0< \sinh{\left(d_q(\Pi(v))\right)}\left\langle v, \operatorname{grad} d_q(\Pi(v))\right\rangle_{\mathbb{H}^{n+1}}<\cosh d_q(\Pi(v)).\label{positive}
\end{align}
 Let $\Gamma:=\bigcup_{i=1}^{k-1} \tilde{\mathcal{N}}_{A, i}$. For all $ v\in\tilde{\mathcal{N}}_A\setminus \Gamma$, we have 
\begin{align}
g(v)\tilde{E}_{k-1}(v)=0.\label{gamma}
\end{align} 

For any $r=\{1,\dots,n\}$, by Theorem \ref{shiftedminkow}, we have 
 \begin{align}
     &\int_{\mathcal{N}_A} g\left((\cosh\circ d_q \circ \Pi-S_q) \cdot \tilde E_{r-1}-S_q \tilde E_r\right) d\mathcal{L}^n v\nonumber\\
     =&\int_{\tilde{\mathcal{N}}_A} g\left((\cosh\circ d_q \circ \Pi-S_q) \cdot \tilde E_{r-1}-S_q \tilde E_r\right) d\mathcal{L}^n v\nonumber\\
     =&0.\label{regularminkow}
 \end{align}
Hence by \eqref{gvek} and \eqref{gver} we have 
\begin{align}
     &\int_{\tilde{\mathcal{N}}_{A,0}} g\left((\cosh\circ d_q \circ \Pi-S_q) \cdot \tilde E_{k-1}-S_q \tilde E_k\right) d\mathcal{L}^n v\nonumber\\
    =&-\int_{\tilde{\mathcal{N}}_{A}\setminus\tilde{\mathcal{N}}_{A,0}} g\left((\cosh\circ d_q \circ \Pi-S_q) \cdot \tilde E_{k-1}-S_q \tilde E_k\right) d\mathcal{L}^n v\nonumber\\
    =&-\int_{\tilde{\mathcal{N}}_{A}\setminus\tilde{\mathcal{N}}_{A,0}} g\cdot(\cosh\circ d_q \circ \Pi-S_q) \cdot \tilde E_{k-1} d\mathcal{L}^n v\nonumber\\
    =&-\int_{\Gamma} g\cdot(\cosh\circ d_q \circ \Pi-S_q) \cdot \tilde E_{k-1} d\mathcal{L}^n v\nonumber\\
    \leq&0, \label{minkowEk}
 \end{align}
 where the second equality holds by (\ref{gvek}), the third equality holds by (\ref{gamma}) and the last equality holds by (\ref{kia}). Similarly we have 
 
\begin{align}
    &\int_{\tilde{\mathcal{N}}_{A,0}} g\left(S_q \tilde E_1-(\cosh\circ d_q \circ \Pi-S_q)\right) d\mathcal{L}^n v\nonumber\\
    =&-\int_{\tilde{\mathcal{N}}_{A}\setminus\tilde{\mathcal{N}}_{A,0}} g\left(S_q \tilde E_1-(\cosh\circ d_q \circ \Pi-S_q) \right) d\mathcal{L}^n v\nonumber\\
    =&-\int_{\tilde{\mathcal{N}}_{A}\setminus\tilde{\mathcal{N}}_{A,0}} gS_q \tilde E_1 d\mathcal{L}^n v\nonumber\\
    =&-\int_{\tilde{\mathcal{N}}_{A,1}} gS_q \tilde E_1 d\mathcal{L}^n v\nonumber\\
    \leq&0, \label{minkowE1}
\end{align}
where the second equality holds by (\ref{gvek}) and the last equality holds by (\ref{kia}) and (\ref{positive}).

Next we will apply the  inequality and its equality conditions derived by Simon \cite{simon}: \begin{align}
   & E_s^{1/ s}(v)-E_t^{1/ t}(v) \geq0, \quad\text{for}\, 1\leq s<t\leq n,\label{simon}
\end{align}
where equality holds iff  $\kappa_1(v)=\ldots=\kappa_n(v)$  or  $H_s(v)=0$.

By \eqref{kia}, \eqref{Ek}, \eqref{positive}, \eqref{minkowEk}, \eqref{minkowE1} and \eqref{simon}, we obtain
\begin{align}
    \nonumber0\leq& \int_{\tilde{\mathcal{N}}_{A,0}} g(\cosh \circ d_q \circ \Pi-S_q)(\tilde{E}_{k-1}-\tilde{E}_k^\frac{k-1}{k})a^{1-k} d\mathcal{L}^n v
    \\=& \int_{\tilde{\mathcal{N}}_{A,0}} g(\cosh \circ d_q \circ \Pi-S_q)(\tilde{E}_{k-1}-\tilde{E}_k^\frac{k-1}{k})\tilde{E}_k^\frac{1-k}{k}d\mathcal{L}^n v\nonumber\\ \nonumber
    =&\int_{\tilde{\mathcal{N}}_{A,0}}g\left((\tilde E_k^\frac{1}{k}-\tilde E_1)S_q+((\cosh \circ d_q \circ \Pi-S_q)\tilde{E}_{k-1}-S_q\tilde{E}_k)\tilde{E}_k^\frac{1-k}{k}\right.\\ \nonumber
    &\left.+(S_q\tilde{E}_1-(\cosh \circ d_q \circ \Pi-S_q)          \right)d\mathcal{L}^n v\\ \nonumber
    =&\int_{\tilde{\mathcal{N}}_{A,0}}g\left((\tilde E_k^\frac{1}{k}-\tilde E_1)S_q+((\cosh \circ d_q \circ \Pi-S_q)\tilde{E}_{k-1}-S_q\tilde{E}_k)a^{1-k}\right.\\ \nonumber
    &\left.+(S_q\tilde{E}_1-(\cosh \circ d_q \circ \Pi-S_q)          \right)d\mathcal{L}^n v\nonumber
    \\
    \leq& 0, \label{kieq1} 
\end{align}
from which it follows that \begin{align}
    \int_{\tilde{\mathcal{N}}_{A,0}}g(\tilde E_k^\frac{1}{k}-\tilde E_1)S_q d\mathcal{L}^n v=0.\label{kieq2}
\end{align}
Noticing that $g(v)>0$ for $v\in\tilde{\mathcal{N}}_{A,0}$ and combining (\ref{positive}), we have 
$\tilde E_k^\frac{1}{k}-\tilde E_1=0$ for $\mathcal{L}^n$-almost every $v\in\tilde{\mathcal{N}}_{A,0}$. Then, by (\ref{Ek}), for $\mathcal{L}^n$-almost every $v\in\tilde{\mathcal{N}}_{A,0}$, we have \begin{align}
    \kappa_i(v)=a+1,\quad i=1,\dots,n.\label{kieqa}
\end{align}

We next prove 
\begin{align}
 \mathcal{L}^n(\cup_{i=1}^{r}\tilde{\mathcal{N}}_{A,i})=0,\quad r=1,\dots,n.\label{regular}   
\end{align}
Proof of (\ref{regular}): 
 We first note that 
\[g(v) \tilde E_r(v)=\binom{n}{r}^{-1} \prod_{i=r+1}^n\left(\kappa_i(v)^2+1\right)^{-1/2}>0\] 
for $v \in \tilde{\mathcal{N}}_{A, r}$. We prove (\ref{regular}) by induction. For $r=1$, by (\ref{kieq1}) we have 
\begin{align*}
    \int_{\tilde{\mathcal{N}}_{A,0}}g(S_q\tilde{E}_1-(\cosh \circ d_q \circ \Pi-S_q))d\mathcal{L}^nv=0.
\end{align*}
Hence by \eqref{minkowE1} we have \begin{align*}
    \int_{\tilde{\mathcal{N}}_{A,1}} gS_q \tilde E_1 d\mathcal{L}^n v=0.
\end{align*}
Noticing that $g(v)\tilde{E}_1(v)>0$ and $S_q>0$ on $\tilde{\mathcal{N}}_{A,1}$, we have 
$\mathcal{L}^n(\tilde{\mathcal{N}}_{A,1})=0$.  Now if we have obtained $\mathcal{L}^n(\cup_{i=1}^{r}\tilde{\mathcal{N}}_{A,i})=0$ for a fixed $1\leq r\leq n-1$. Then, by
 \eqref{gtildeE}, Theorem \ref{shiftedminkow}, \eqref{kieqa} and  $\mathcal{L}^n(\cup_{i=1}^{r}\tilde{\mathcal{N}}_{A,i})=0$,  we    have \begin{align*}
     & \int_{\tilde{\mathcal{N}}_{A, r+1}}g S_q  \tilde{E}_{r+1} d \mathcal{L}^n v \\ 
     =& \int_{\tilde{\mathcal{N}}_{A, r+1}}g \left(S_q  \tilde{E}_{r+1}-(\cosh\circ d_q\circ\Pi-S_q)\tilde{E}_r\right) d \mathcal{L}^n v\\
     =&\int_{\tilde{\mathcal{N}}_{A}\setminus\tilde{\mathcal{N}}_{A, 0}}g \left(S_q  \tilde{E}_{r+1}-(\cosh\circ d_q\circ\Pi-S_q)\tilde{E}_r\right) \mathcal{L}^n v\\
     =&\int_{\tilde{\mathcal{N}}_{A,0}} g\left((\cosh \circ d_q\circ\Pi-S_q) \tilde{E}_r-S_q \tilde{E}_{r+1}\right) d \mathcal{L}^n v\\
     =&\int_{\tilde{\mathcal{N}}_{A,0}} g\left((\cosh \circ d_q\circ\Pi-S_q) a^r-S_q a^{r+1}\right) d \mathcal{L}^n \\=&a^r \int_{\tilde{\mathcal{N}}_{A,0}} g\left((\cosh \circ d_q\circ\Pi-S_q)-S_q \tilde{E}_1\right) d \mathcal{L}^n\\
     =&a^r\int_{\tilde{\mathcal{N}}_{A,1}} gS_q \tilde E_1 d\mathcal{L}^n v=0,
\end{align*} 
from which we have $\mathcal{L}^n(\tilde{\mathcal{N}}_{A,r+1})=0$. We complete the proof of (\ref{regular}).

Now, by (\ref{kieqa}) and (\ref{regular}), we have 
\begin{align*}
    V_{n-k}(A,\beta)&=\binom{n}{k}\int_{\Pi^{-1}({\beta})\cap\mathcal{N}_A} g E_k d\mathcal{L}^n v\\
    &=\binom{n}{k}\int_{\Pi^{-1}({\beta})\cap\tilde{\mathcal{N}}_{A,0}} g E_k d\mathcal{L}^n v\\
    &=\binom{n}{k}(a+1)^k\int_{\Pi^{-1}({\beta})\cap\tilde{\mathcal{N}}_{A,0}} g d\mathcal{L}^n v\\
    &=\binom{n}{k}(a+1)^kV_n(A,\beta), \quad \text{for any} \, \beta\in\mathcal{B}(\mathbb{H}^{n+1}).
\end{align*}
Then, by Theorem \ref{kohlmann98thm}, it follows that $A$ is a geodesic ball.
\end{proof}
\subsection{Relations between modified quermassintegrals and modified curvature integrals}
There are some identities relating modified quermassintegrals and modified curvature integrals for smooth domains. The following lemma was stated in \cite{Li-Xu}. For the convenience of the reader we give a calculation in this subsection. 
\begin{lemma}\label{lemma3.4}
    Let $A\in\mathcal{H}_{n+1}(\mathbb{H}^{n+1})$ be a smooth domain. 
    For  $1\leq k\leq n-1$, we have
    \begin{align}
        \int_{\partial A} \tilde{E}_k d\mathcal{L}^n p=(n-2k)\tilde{W}_k(A)+(n-k)\tilde{W}_{k+1}(A).\label{ekquer}
    \end{align}
Moreover, when $k=n$ we have
    \begin{align}
        \int_{\partial A} \tilde{E}_n d\mathcal{L}^n p=\omega_n-n\tilde{W}_n(A).\label{modifiedcurvaturemeasure}
    \end{align}
\end{lemma}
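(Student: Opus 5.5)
The plan is to reduce both identities to the classical relations \eqref{QandC} between curvature integrals and quermassintegrals, using the pointwise expansion \eqref{relationsbetweenge}, and then to check a binomial identity. Since $A$ is smooth and h-convex, $\tilde{E}_k=\sum_{\ell=0}^{k}(-1)^{k-\ell}\binom{k}{\ell}E_\ell$ pointwise on $\partial A$. Integrating over $\partial A$ gives
$$\int_{\partial A}\tilde E_k\,d\mathcal{L}^n p=\sum_{\ell=0}^{k}(-1)^{k-\ell}\binom{k}{\ell}\int_{\partial A}E_\ell\,d\mathcal{L}^n p .$$

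\textbf{Step 1: a uniform formula for $\int_{\partial A}E_\ell$.} Since $V_{n-\ell}(A)=\binom{n}{\ell}\int_{\partial A}E_\ell$, the relations \eqref{QandC} can be rewritten as
$$\int_{\partial A}E_\ell\,d\mathcal{L}^n p=(n-\ell)W_{\ell+1}(A)+\ell\, W_{\ell-1}(A)+\delta_{\ell n}\,\omega_n,\qquad 0\le\ell\le n .$$
For $1\le \ell\le n-1$ this is the first line of \eqref{QandC}. For $\ell=0$ it is $|\partial A|=nW_1$. For $\ell=n$ it is $V_0=\omega_n+nW_{n-1}$; here the coefficient of $W_{n+1}$ vanishes, so the value of $W_{n+1}$ plays no role.

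\textbf{Step 2: compare coefficients for $1\le k\le n-1$.} Substitute Step 1 into the sum; the $\delta_{\ell n}$ term does not appear since $\ell\le k<n$. Also expand the right-hand side of \eqref{ekquer} via \eqref{Quermassine}. Then compare the coefficient of each $W_m$, factoring out the common sign $(-1)^{k-m}$.

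On the left, the coefficient of $W_m$ comes from $\ell=m-1$ and $\ell=m+1$, giving
$$-(n-m+1)\binom{k}{m-1}-(m+1)\binom{k}{m+1}.$$
On the right it is
$$(n-2k)\binom{k}{m}-(n-k)\binom{k+1}{m}.$$
To see that these agree, use the three standard identities
$$\binom{k+1}{m}=\binom{k}{m}+\binom{k}{m-1},\qquad (m+1)\binom{k}{m+1}=(k-m)\binom{k}{m},\qquad m\binom{k}{m}=(k-m+1)\binom{k}{m-1}.$$
With these, both expressions reduce to $-k\binom{k}{m}-(n-k)\binom{k}{m-1}$. This proves \eqref{ekquer}.

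\textbf{Step 3: the case $k=n$.} The same computation applies with $k=n$. The term $\ell=n$ now contributes the extra constant $(-1)^0\binom{n}{n}\omega_n=\omega_n$. The would-be $W_{n+1}$ contributions carry the factors $(n-\ell)$ with $\ell=n$ on the left and $(n-k)$ with $k=n$ on the right, so both vanish. The coefficient identity of Step 2 therefore gives $\int_{\partial A}\tilde E_n=\omega_n+(n-2n)\tilde W_n(A)=\omega_n-n\tilde W_n(A)$, which is \eqref{modifiedcurvaturemeasure}.

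The only real work is the bookkeeping in Step 1 at the endpoints $\ell=0$ and $\ell=n$ (making sure the constant $\omega_n$ and the convention for $W_{n+1}$ are handled correctly), together with the binomial verification in Step 2, which I expect to be the main, though routine, obstacle.
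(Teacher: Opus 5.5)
Your proposal is correct and follows essentially the paper's route: expand $\tilde E_k=\sum_{\ell}(-1)^{k-\ell}\binom{k}{\ell}E_\ell$, integrate, insert \eqref{QandC}, and verify the binomial identity \eqref{comb}. The differences are cosmetic. You check \eqref{comb} by comparing the coefficient of each $W_m$ (both sides do reduce to $-k\binom{k}{m}-(n-k)\binom{k}{m-1}$), whereas the paper regroups the sums into $\tilde W_{k+1},\tilde W_k,\tilde W_{k-1}$. You also get the case $k=n$ from the same identity, since the $W_{n+1}$ coefficients vanish on both sides and $\omega_n$ enters through $\ell=n$; the paper instead does a separate computation for that case.
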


We first derive a combinatorial lemma.  Set $W_{-1}(A)=0$ for all $A\in\mathcal{H}_{n+1}(\mathbb{H}^{n+1})$.
\begin{lemma}
    For any smooth domain $A\in\mathcal{H}_{n+1}(\mathbb{H}^{n+1})$, there holds
    \begin{align}
        &\sum\limits^{k}\limits_{i=0}(-1)^{k-i}\binom{k}{i}\left((n-i)W_{i+1}(A)+iW_{i-1}(A)\right)\nonumber\\=&(n-2k)\tilde{W}_k(A)+(n-k)\tilde{W}_{k+1}.\label{comb}
    \end{align}
\end{lemma}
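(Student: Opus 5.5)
The plan is to treat \eqref{comb} as a purely formal identity among the symbols $W_{-1}=0, W_0(A),\dots,W_{k+1}(A)$. Nothing about $A$ is used beyond the definition \eqref{Quermassine} of $\tilde W_k$. So we expand both sides as linear combinations of the $W_j(A)$ and compare the coefficient of each $W_j$, for $0\le j\le k+1$. Throughout we use the convention $\binom{k}{m}=0$ for $m<0$ or $m>k$. This convention absorbs the boundary terms: the $i=0$ contribution $iW_{i-1}=0$ (consistent with $W_{-1}=0$), and the top index $W_{k+1}$ appears only through the first summand on the left and through $\tilde W_{k+1}$ on the right.

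\emph{Left-hand side.} The term $(n-i)W_{i+1}$ contributes to $W_j$ when $i=j-1$, and the term $iW_{i-1}$ contributes when $i=j+1$. The two contributions carry opposite signs $(-1)^{k-j+1}$ and $(-1)^{k-j-1}=-(-1)^{k-j}$; keeping track of these signs is the one place to be careful. The coefficient of $W_j$ is therefore
\[
(-1)^{k-j}\Big[-(n-j+1)\tbinom{k}{j-1}-(j+1)\tbinom{k}{j+1}\Big].
\]
We simplify with $(j+1)\binom{k}{j+1}=(k-j)\binom{k}{j}$.

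\emph{Right-hand side.} Inserting \eqref{Quermassine} for $\tilde W_k$ and $\tilde W_{k+1}$, the coefficient of $W_j$ is
\[
(-1)^{k-j}\Big[(n-2k)\tbinom{k}{j}-(n-k)\tbinom{k+1}{j}\Big].
\]
Applying Pascal's rule $\binom{k+1}{j}=\binom{k}{j}+\binom{k}{j-1}$, this becomes $(-1)^{k-j}\big[-k\binom{k}{j}-(n-k)\binom{k}{j-1}\big]$.

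\emph{Comparison.} Subtracting the two brackets gives
\[
-(k-j+1)\tbinom{k}{j-1}+j\tbinom{k}{j},
\]
which vanishes by $j\binom{k}{j}=(k-j+1)\binom{k}{j-1}$. This proves \eqref{comb} for every $j$, including the extreme values $j=0$ and $j=k+1$, where one binomial is zero. As a sanity check, for $k=0$ both sides equal $nW_1$, and for $k=1$ both equal $(n-1)W_2-nW_1+W_0$.

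The only real obstacle is bookkeeping: getting the sign of the $iW_{i-1}$ contribution right, and checking that the index ranges at $j=0$ and $j=k+1$ are correctly handled by the zero-binomial convention.
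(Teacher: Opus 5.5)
Your proof is correct. The coefficient of each $W_j$ on both sides, the absorption identities $(j+1)\binom{k}{j+1}=(k-j)\binom{k}{j}$ and $j\binom{k}{j}=(k-j+1)\binom{k}{j-1}$, and your use of Pascal's rule all check out, including at $j=0$ and $j=k+1$.

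Your argument runs in the opposite direction from the paper's. The paper never expands into the $W_j$. It splits the left side into the three sums $n\sum_i(-1)^{k-i}\binom{k}{i}W_{i+1}$, $-\sum_i(-1)^{k-i}\binom{k}{i}\,iW_{i+1}$ and $\sum_i(-1)^{k-i}\binom{k}{i}\,iW_{i-1}$. It then rewrites these directly as $n(\tilde W_{k+1}+\tilde W_k)$, $-k(\tilde W_{k+1}+2\tilde W_k+\tilde W_{k-1})$ and $k\tilde W_{k-1}$, so that the $\tilde W_{k-1}$ terms cancel. For this it uses absorption $i\binom{k}{i}=k\binom{k-1}{i-1}$, the identity $\binom{k}{i}=\binom{k+1}{i+1}-\binom{k}{i+1}$, and the iterated Pascal identity $\binom{k-1}{i-1}=\binom{k+1}{i+1}-2\binom{k}{i+1}+\binom{k-1}{i+1}$. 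That derivation shows how the right-hand side arises. However, the index shifts in the first two sums produce $\pm W_0$ boundary terms, and the paper leaves their cancellation implicit.

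Your comparison in the basis $W_0,\dots,W_{k+1}$ is a verification rather than a derivation, since you need the right-hand side in advance. In exchange it uses only Pascal's rule and absorption, and it handles the endpoints explicitly through the zero-binomial convention. It is also easier to check line by line.

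One wording slip: the two contributions to the coefficient of $W_j$ on the left carry the \emph{same} sign, since $(-1)^{k-j+1}=(-1)^{k-j-1}=-(-1)^{k-j}$. Your displayed bracket, with both terms negative, reflects this correctly, so only the sentence needs fixing.
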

\begin{proof}
          By a direct calculation, 
      \begin{align}
        &\sum\limits^{k}\limits_{i=0}(-1)^{k-i}\binom{k}{i}\left((n-i)W_{i+1}(A)+iW_{i-1}(A)\right)\nonumber\\=&\sum\limits^{k}\limits_{i=0}(-1)^{k-i}\binom{k}{i}(n-i)W_{i+1}(A)+\sum\limits^{k}\limits_{i=0}(-1)^{k-i}\binom{k}{i}iW_{i-1}(A)\nonumber\\
        =&n\sum\limits^{k}\limits_{i=0}(-1)^{k-i}\binom{k}{i}W_{i+1}(A)-\sum\limits^{k}\limits_{i=0}(-1)^{k-i}\binom{k}{i}iW_{i+1}(A)\nonumber\\
        &\quad + \sum\limits^{k}\limits_{i=0}(-1)^{k-i}\binom{k}{i}iW_{i-1}(A).\label{combi}
    \end{align}
    Using $\binom{k}{i}=\binom{k+1}{i+1}-\binom{k}{i+1}$ together with (\ref{Quermassine}), we have
    \begin{align}
        &n\sum\limits^{k}\limits_{i=0}(-1)^{k-i}\binom{k}{i}W_{i+1}(A)\nonumber\\=&n\sum\limits^{k}\limits_{i=0}(-1)^{k-i}\left(\binom{k+1}{i+1}-\binom{k}{i+1}\right)W_{i+1}(A)\nonumber\\
        =&n\tilde{W}_{k+1}(A)+n\tilde{W}_k(A).\label{lemma3.31}
    \end{align}
    Noticing that $$\binom{k-1}{i-1}=\binom{k}{i}-\binom{k-1}{i}=\binom{k+1}{i+1}-2\binom{k}{i+1}+\binom{k-1}{i+1}$$ and combining (\ref{Quermassine}), a similar calculation yields 
    \begin{align}
        &\sum\limits^{k}\limits_{i=0}(-1)^{k-i}\binom{k}{i}iW_{i+1}(A)\nonumber\\=&k\sum\limits^{k}\limits_{i=0}(-1)^{k-i}\binom{k-1}{i-1}W_{i+1}(A)\nonumber\\
        =&k\sum\limits^{k}\limits_{i=0}(-1)^{k-i}\left(\binom{k+1}{i+1}-2\binom{k}{i+1}+\binom{k-1}{i+1}\right)W_{i+1}(A)\nonumber\\
        =&k\tilde{W}_{k+1}(A)+2k\tilde{W}_{k}(A)+k\tilde{W}_{k-1}(A)\label{lemma3.32}
        \end{align}

        Similarly we have \begin{align}
            \sum\limits^{k}\limits_{i=0}(-1)^{k-i}\binom{k}{i}iW_{i-1}(A)=k\sum\limits^{k}\limits_{i=0}(-1)^{k-i}\binom{k-1}{i-1}W_{i-1}(A)=k\tilde{W}_{k-1}(A).\label{lemma3.33}
        \end{align}
        Plugging (\ref{lemma3.31}),(\ref{lemma3.32}) and (\ref{lemma3.33}) into (\ref{combi}), we obtain (\ref{comb}).
\end{proof}

Now we derive the relations between modified quermassintegrals and modified curvature integrals for smooth h-convex domains.

\begin{proof}[proof of Lemma \ref{lemma3.4}]
For $1\leq k\leq n-1$, by (\ref{QandC}) and (\ref{comb}), we have
\begin{align*}
    \int_{\partial A} \tilde{E}_k d\mathcal{L}^n p&=\sum\limits_{i=0}\limits^{k} (-1)^{k-i}\binom{k}{i}\int_{\partial A} E_i d\mathcal{L}^n p\\&=(-1)^k|\partial A|+\sum\limits_{i=1}\limits^{k} (-1)^{k-i}\binom{k}{i}\left((n-i)W_{i+1}(A)+iW_{i-1}(A)\right)\\&=(-1)^k|\partial A|+\sum\limits_{i=0}\limits^{k} (-1)^{k-i}\binom{k}{i}\left((n-i)W_{i+1}(A)+iW_{i-1}(A)\right)\\
    &\qquad -(-1)^k nW_1(A)\\
    &=(n-2k)\tilde{W}_k(A)+(n-k)\tilde{W}_{k+1}(A).
\end{align*}

For $k=n$, we have \begin{align}
        \int_{\partial A} \tilde{E}_n d\mathcal{L}^n p=\sum\limits_{i=0}\limits^{n} (-1)^{n-i}\binom{n}{i}\int_{\partial A} E_i d\mathcal{L}^n p. \label{shiftedE}
    \end{align}
Combining (\ref{QandC}) and (\ref{Quermassine}), we have
\begin{align*}
    \int_{\partial A} \tilde{E}_n d\mathcal{L}^n p=&\sum\limits_{i=0}\limits^{n} (-1)^{n-i}\binom{n}{i}\int_{\partial A} \tilde{E}_i d\mathcal{L}^n p\\
    =&nW_1(A)+\omega_n+nW_{n-1}(A)\\&+\sum\limits_{i=1}\limits^{n-1} (-1)^{n-i}\binom{n}{i}\left((n-i)W_{i+1}(A)+iW_{i-1}(A)\right)\\
    =&nW_1(A)+\omega_n+nW_{n-1}(A)\\&+n\sum\limits_{i=1}\limits^{n-1} (-1)^{n-i}\left(\binom{n-1}{i}W_{i+1}(A)+\binom{n-1}{i-1}W_{i-1}(A)\right)\\
    =&nW_1(A)+\omega_n+nW_{n-1}(A)\\
    &+n\sum\limits_{i=2}\limits^{n} (-1)^{n-i+1}\binom{n-1}{i-1}W_{i}(A))\\
    &+n\sum\limits_{i=0}\limits^{n-2} (-1)^{n-i+1}\binom{n-1}{i}W_{i}(A)).
\end{align*}
Since $\binom{n-1}{i-1}+\binom{n-1}{i}=\binom{n}{i}$, we have
\begin{align*}
\int_{\partial A} \tilde{E}_n d\mathcal{L}^n p=&\omega_n-n\tilde{W}_n(A).
\end{align*}
\end{proof}

%By Lemma \ref{lemma3.4},  the following corollary follows. 
%\begin{corollary}
 %   For any smooth domain $A\in\mathcal{H}_{n+1}(\mathbb{H}^{n+1})$ there holds 
 %   \begin{align*}
  %  (n-2k)\tilde{W}_k(A)+(n-k)\tilde{W}_{k+1}&\geq 0, k=1,\dots,n-1,
  %  \\ \omega_n-n\tilde{W}_n(A)&\geq 0.
  %  \end{align*}
%\end{corollary}

\section{Shifted Gauss curvature flows}\label{SGCF}

\subsection {A priori estimates and long time existence}\label{prio}
  
In this subsection, we derive the uniform upper and lower bounds on the inner radius and outer radius of the domains enclosed by the solution to the flow (\ref{flow}) and prove that the strict h-convexity is preserved along the flow, from which we can obtain the time-dependent upper and lower bounds on the shifted principal curvatures. Hence by a standard PDE argument we can obtain the long time existence of solutions to  the flow (\ref{flow}) with strictly h-convex initial hypersurfaces.

 Let $M_t=\partial \Omega_t$ be a smooth  solution of the flow (\ref{flow}) on the time interval $[0, T)$, with $M_0$  strictly h-convex.
Since  the volume of $\Omega_t$ is preserved along the flow (\ref{flow}), we obtain the following lemma.
\begin{lemma} \label{C0estimate}
   If $M_t$ is strictly h-convex for all $t\in[0,T)$, then there exist constants $c_1, c_2>0$, depending on $n$ and $M_0$, such that $$0<c_1<\rho_{-}(\Omega_t)<\rho_{+}(\Omega_t)<c_2.$$
 \end{lemma}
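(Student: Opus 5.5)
Since the volume of $\Omega_t$ is preserved along the flow, the plan is to combine this conservation law with the inner/outer radius comparison for h-convex domains in Theorem \ref{hconvex}. First, the upper bound. Since $M_t$ is strictly h-convex, $\Omega_t\in\mathcal{H}_{n+1}(\mathbb{H}^{n+1})$. Let $o_t$ be the center of an inner ball of $\Omega_t$. The inner ball $B(o_t,\rho_-(\Omega_t))\subseteq\Omega_t$ gives
\[
\mathrm{Vol}(B(\rho_-(\Omega_t)))\leq \mathrm{Vol}(\Omega_t)=\mathrm{Vol}(\Omega_0),
\]
and since $r\mapsto \mathrm{Vol}(B(r))$ is strictly increasing and unbounded, $\rho_-(\Omega_t)\leq C(n,\mathrm{Vol}(\Omega_0))$. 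By \eqref{innerandouter}, $\Omega_t\subseteq B(o_t,\rho_-(\Omega_t)+\ln 2)$, so $\rho_+(\Omega_t)\leq \rho_-(\Omega_t)+\ln2$ is also bounded above by a constant $c_2$ depending only on $n$ and $M_0$. (Alternatively one may use $\rho_+\leq c(\rho_-+\rho_-^{1/2})$.)

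Second, the lower bound. The containment $\Omega_t\subseteq B(o_t,\rho_-(\Omega_t)+\ln 2)$ gives
\[
\mathrm{Vol}(\Omega_0)=\mathrm{Vol}(\Omega_t)\leq \mathrm{Vol}(B(\rho_-(\Omega_t)+\ln 2)).
\]
This only yields $\rho_-+\ln 2\geq$ const, which is insufficient when $\mathrm{Vol}(\Omega_0)$ is small. This is the main obstacle. To overcome it I would use the sharper estimate $\rho_+(\Omega_t)\leq c(\rho_-(\Omega_t)+\rho_-^{1/2}(\Omega_t))$ from Theorem \ref{hconvex}(1), whose right-hand side tends to $0$ as $\rho_-\to0$. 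Then
\[
\mathrm{Vol}(\Omega_0)\leq \mathrm{Vol}\bigl(B(c(\rho_-+\rho_-^{1/2}))\bigr).
\]
If $\rho_-(\Omega_t)$ were arbitrarily small, the right-hand side would tend to zero, contradicting $\mathrm{Vol}(\Omega_0)>0$. Hence $\rho_-(\Omega_t)\geq c_1>0$ with $c_1$ depending only on $n$ and $\mathrm{Vol}(\Omega_0)$.

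Finally, $\rho_-\leq\rho_+$ holds trivially. The inequalities can be made strict by slightly adjusting the constants; alternatively, strictness follows because equality of the inner and outer radii forces a ball, a case covered by shrinking $c_1$ and enlarging $c_2$. All the constants depend only on $n$ and $\mathrm{Vol}(\Omega_0)$, hence on $n$ and $M_0$.
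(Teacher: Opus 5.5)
Your proof is correct and is essentially the paper's argument: volume preservation gives $\mathrm{Vol}(B(\rho_-(\Omega_t)))\le\mathrm{Vol}(\Omega_0)\le\mathrm{Vol}(B(\rho_+(\Omega_t)))$, and Theorem~\ref{hconvex}(1) turns these into the two-sided bounds. You are right that the lower bound on $\rho_-$ needs a comparison that tends to zero as $\rho_-\to0$ (the $\rho_-^{1/2}$ estimate, or equivalently the sharp logarithmic term in \eqref{innerandouter}), not the crude $\rho_-+\ln2$. The only slip is your final remark: no adjustment of $c_1,c_2$ can make the middle inequality $\rho_-<\rho_+$ strict, since it fails for geodesic balls, but this is a looseness in the statement itself, which should read $\rho_-\le\rho_+$.
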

\begin{proof}
    Since the enclosed volume is preserved, we have $$\mathrm{Vol}( B(p_t,\rho_- (\Omega_t))\leq \mathrm{Vol}(\Omega_t)=\mathrm{Vol}(\Omega_0)\leq \mathrm{Vol}( B(p'_t,\rho_+(\Omega_t)),$$ where $B(p_t,\rho_- (\Omega_t))$ and $B(p'_t,\rho_+(\Omega_t)$  are respectively the inner ball and the outer ball of $\Omega_t$.  This, together with  (\ref{innerandouter}),  completes the proof.
\end{proof}

Let $M_t=\partial \Omega_t$ be a smooth strictly h-convex solution of the flow (\ref{flow}) on $[0,T)$.  By Lemma \ref{C0estimate}, for any $t\in[0,T)$ we can choose an inner ball $B(p_t,\rho_-(\Omega_t))\subseteq\Omega_t$. The following lemma shows that  for any $t\in [0,T)$ one can find a geodesic ball that remains inside $\Omega_t$ for a short subsequent time interval. 

\begin{lemma}\label{constrainedball}
        Let $M_t=\partial \Omega_t$ be a smooth strictly h-convex solution to the flow (\ref{flow}) on the time interval $[0,T)$. For any $t_0\in[0,T)$,  there exists a constant $\tau>0$ depending on $M_0$, $n$ and $\alpha$  such that 
        $$B\left(p_0, \frac{\rho_- (\Omega_{t_0})}{2}\right) \subseteq \Omega_t, \,\text{for all} ~\,t \in\left[t_0, \min \left\{T, t_0+\tau\right\}\right),$$ where $B(p_0, \rho_- ({\Omega}_{t_0}))$ is an inner ball of $\Omega_{t_0}$.
    \end{lemma}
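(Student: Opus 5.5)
The plan is a barrier argument with a shrinking geodesic sphere. The structure of the speed makes this possible: the normal velocity is $\phi(t)-\tilde E_n^{\alpha/n}$, and $\phi(t)\ge 0$ because $M_t$ is strictly h-convex. So the hypersurface moves inward at normal speed at most $\tilde E_n^{\alpha/n}$. A geodesic sphere of radius $r$ has principal curvatures $\coth r>1$, hence shifted Gauss curvature $(\coth r-1)^n$.

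First I would set up the comparison solution. Let $\rho(t)$ solve
\[
\rho'(t)=-(\coth\rho-1)^{\alpha},\qquad \rho(t_0)=\rho_-(\Omega_{t_0}),
\]
so that $\partial B(p_0,\rho(t))$ moves inward with normal speed $\tilde E_n^{\alpha/n}$ of the sphere and no global term. This is exactly the flow $\partial_t X=-\tilde E_n^{\alpha/n}\nu$. By Lemma \ref{C0estimate}, $\rho_-(\Omega_{t_0})\ge c_1$. While $\rho\ge c_1/2$, the speed is bounded by $(\coth(c_1/2)-1)^\alpha$. Hence $\rho(t)\ge \rho_-(\Omega_{t_0})/2$ for $t-t_0\le \tau$, where
\[
\tau:=\frac{c_1}{2}\bigl(\coth(c_1/2)-1\bigr)^{-\alpha}.
\]
This $\tau$ depends only on $n$, $\alpha$ and $M_0$, through $c_1$.

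Second, I would run the comparison principle. At $t_0$ we have $B(p_0,\rho(t_0))\subseteq\Omega_{t_0}$. Suppose the ball first touches $M_t$ from inside at some time $t_1>t_0$ and point $x$. There $M_{t_1}$ lies outside the sphere and is tangent to it, so the Weingarten matrices satisfy $\mathcal W_{M}\le \coth\rho\, I$. Both sides exceed $I$ by strict h-convexity, so $\tilde E_n(M_{t_1})(x)\le(\coth\rho-1)^n$. Also $\phi(t_1)\ge0$. Thus the inward normal speed of $M$ at $x$, namely $\tilde E_n^{\alpha/n}-\phi$, is at most that of the sphere, and the sphere cannot overtake $M$.

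To make this rigorous, I would use the standard argument on $\min_{M_t} d_{p_0}-\rho(t)$. I would either add a small $\varepsilon$ to the sphere speed and let $\varepsilon\to0$, or use the strong maximum principle for the graph parametrizations over the sphere. Alternatively, I would argue with Hamilton's trick: at a minimum point of $d_{p_0}$ on $M_t$, the normal is radial, and the time derivative of the minimum is $\ge \phi-\tilde E_n^{\alpha/n}\ge -(\coth d-1)^\alpha$. ODE comparison then gives $\min d_{p_0}\ge \rho(t)$. Note that I apply this to $\min_{M_t} d_{p_0}$ itself. The ball need not remain an inner ball; only containment is needed.

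The main obstacle is the tangency step. At a minimum point $x$ of $d_{p_0}$ on $M_t$, one needs $\nu=\partial d_{p_0}$ and the second-order condition $h_{ij}\le\coth(d_{p_0})g_{ij}$. Monotonicity of $\tilde E_n$ on matrices $\ge I$ then converts this into the speed bound. I also have to check that the resulting bound is $\tilde E_n^{\alpha/n}\le(\coth d-1)^\alpha$ with $d=\min d_{p_0}\ge\rho(t)\ge\rho_-(\Omega_{t_0})/2$. This keeps $d$ away from zero on $[t_0,\min\{T,t_0+\tau\})$, which closes the argument.
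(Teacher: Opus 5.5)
Your proof is correct and follows essentially the same route as the paper. Both use a barrier sphere centered at $p_0$ shrinking by $r'=-(\coth r-1)^{\alpha}$, and both rule out contact using $\phi>0$ together with the tangency comparison $h_{ij}\le\coth(d_{p_0})\,g_{ij}$ at a touching point. The only difference is the starting radius: the paper starts the barrier at $\frac{3}{4}\rho_-(\Omega_{t_0})$, which gives strict initial separation so a plain first-contact argument suffices; your choice $\rho(t_0)=\rho_-(\Omega_{t_0})$ requires the extra Hamilton-trick/ODE-comparison (or $\varepsilon$-perturbation) step you describe, and that step does go through.
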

\begin{proof}
    Consider the geodesic sphere centered at $p_0$ whose radius evolves under the ODE 
$$\left\{\begin{array}{l}r_B^{\prime}(t)=-\Psi\left((\operatorname{coth}r_B-1)I\right), \\ r_B(t_0)=\frac{3\rho_- (\Omega_{t_0})}{4}.\end{array}\right.$$

Let 
\[\tau_1=\int_{\rho_- (\Omega_{t_0}) /2}^{\frac{3\rho_- (\Omega_{t_0})}{4}} \frac{1}{\Psi\left((\operatorname{coth}(s)-1)I\right)}d s,\] 
at which time the radius of geodesic ball decreases from $\frac{3\rho_- (\Omega_{t_0})}{4}$ to $\frac{\rho_- (\Omega_{t_0})}{2}$. Noticing that $\rho_- (\Omega_{t_0})$ is bounded by Lemma \ref{C0estimate}, then $\tau_1$ has a lower bound $\tau>0$ depending on $M_0$ and $n$. We claim that  $\tau$ is exactly the one we require in this lemma: If it is not true, let $f(x,t)=d_{p_0}(x,t)-r_B(t)$.  Since  $f(x,t_0)>0$, there exists $t^*\in[t_0,\min\{t_0+\tau,T\})$ such that $f(x,t^*)$  attains $0$ for the first time at some point $p\in M$.  Then we have $$\frac{\partial}{\partial t}f(p,t^*)\leq 0.$$ However, by (\ref{flow}) we have \begin{align}\label{partialtd}
    \frac{\partial}{\partial t}d_{p_0}(x,t)=\langle\partial d_{p_0},\frac{\partial}{\partial t}X(x,t)\rangle_{\mathbb{H}^{n+1}}=(\phi(t)-\Psi(S))\langle\partial d_{p_0},\nu\rangle_{\mathbb{H}^{n+1}}> -\Psi(S).
\end{align} Then we can obtain a contradiction since $$\frac{\partial}{\partial t}f(p,t^*)>-\Psi(S)+\Psi((\operatorname{coth}((d_{p_0}(p))-1)I)\geq 0.$$ 
\end{proof}
Suppose that  $M_t$ is a strictly h-convex solution to the flow \eqref{flow}. For any $t_0\in[0,T)$, let $B(p_0, \rho_-(\Omega_{t_0}))$ be an inner ball of $\Omega_{t_0}$. 

Let  $u(x,t):= \operatorname{sinh}d_{p_0}(x)\langle \partial d_{p_0}, \nu\rangle_{\mathbb{H}^{n+1}}$ be the support function  of $M_t$ with respect to $p_0$. Combining (\ref{support}) and Lemma \ref{constrainedball}, there exists $c_3>0$ depending on $n$, $M_0$ and $\alpha$ such that \begin{align}u>2c_3, \,\text{for any} \,t\in \left[t_0, \min \left\{T, t_0+\tau\right\}\right). \label{ulow}\end{align} 

In the following we prove that the strict h-convexity is preserved along the flow (\ref{flow}).  We first derive the evolution equation of the support function.
\begin{lemma}
 \begin{align}\frac{\partial}{\partial t} u=&\dot{\Psi}^{k l}(S) \nabla_k \nabla_l u+\cosh d_{p_0}(x)\left(\phi(t)-(1+\alpha)\Psi(S)-\dot{\Psi}^{k l}(S) \delta^k_l\right)\nonumber\\
 &+(\dot{\Psi}^{kl}(S)(S^2)_{k l}+2\alpha\Psi(S)+\dot{\Psi}^{kl}(S)\delta^k_l) u.\label{u}\end{align}
\end{lemma}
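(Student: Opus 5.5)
We compute $\partial_t u$ directly and then rewrite the result in terms of $\Psi(S)$ and $S$. Two standard facts about the vector field $V=\sinh d_{p_0}\,\partial d_{p_0}$ are used. First, $V$ is a conformal Killing field: $\overline{\nabla}V=\cosh d_{p_0}\,\overline{g}$. Second, on the hypersurface the support function satisfies the classical identity
\begin{align*}
\nabla_k\nabla_l u=\cosh d_{p_0}\, h_{kl}+\langle V,\nabla h_{kl}\rangle-(h^2)_{kl}\,u ,
\end{align*}
where $\langle V,\nabla h_{kl}\rangle$ is shorthand for $\sum_m \langle V,e_m\rangle\nabla_m h_{kl}$ (Codazzi is used to move the derivative).

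\textbf{Time derivative.} Write $\partial_t X=f\nu$ with $f=\phi-\Psi(S)$. Since $u=\langle V,\nu\rangle$, the conformal Killing property and $\partial_t\nu=\nabla\Psi(S)=-\nabla f$ give
\begin{align*}
\partial_t u=\cosh d_{p_0}\, f+\langle V,\nabla\Psi(S)\rangle .
\end{align*}

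\textbf{Spatial part.} Contract the Hessian identity with $\dot\Psi^{kl}(S)$. Because $\nabla_m h_{kl}=\nabla_m S_{kl}$, we have $\dot\Psi^{kl}\nabla_m h_{kl}=\nabla_m\Psi(S)$. Hence
\begin{align*}
\dot\Psi^{kl}\nabla_k\nabla_l u=\cosh d_{p_0}\,\dot\Psi^{kl}h_{kl}+\langle V,\nabla\Psi\rangle-\dot\Psi^{kl}(h^2)_{kl}\,u .
\end{align*}
Subtracting this from the time derivative eliminates the gradient term:
\begin{align*}
\partial_t u-\dot\Psi^{kl}\nabla_k\nabla_l u=\cosh d_{p_0}\bigl(\phi-\Psi-\dot\Psi^{kl}h_{kl}\bigr)+\dot\Psi^{kl}(h^2)_{kl}\,u .
\end{align*}

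\textbf{Rewriting in shifted quantities.} Use $h=S+I$ and $h^2=S^2+2S+I$, together with the Euler relation $\dot\Psi^{kl}S_{kl}=\alpha\Psi$ from the proof of Lemma \ref{lemmasij}. Then
\begin{align*}
\dot\Psi^{kl}h_{kl}&=\alpha\Psi+\dot\Psi^{kl}\delta^k_l,\\
\dot\Psi^{kl}(h^2)_{kl}&=\dot\Psi^{kl}(S^2)_{kl}+2\alpha\Psi+\dot\Psi^{kl}\delta^k_l .
\end{align*}
Substituting these into the previous display yields exactly \eqref{u}.

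The only delicate point is the sign convention for $\partial_t\nu$ and for the Hessian identity; the coefficients in \eqref{u} confirm that the gradient terms cancel. Everything else is bookkeeping, and the argument follows the analogous computation in \cite{AndrewsWei2018}.
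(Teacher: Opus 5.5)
Your proposal is correct and follows the paper's proof essentially step for step. Like the paper, you differentiate $u=\langle \sinh d_{p_0}\,\partial d_{p_0},\nu\rangle$ using the conformal Killing property and $\partial_t\nu=\nabla\Psi(S)$, contract the Codazzi-based Hessian identity for $u$ with $\dot{\Psi}^{kl}$ so that $\langle V,\nabla\Psi\rangle$ cancels, and substitute $h=S+I$, $h^2=S^2+2S+I$ and $\dot{\Psi}^{kl}S_{kl}=\alpha\Psi$; that last relation holds because $\Psi=\tilde{E}_n^{\alpha/n}$ (the speed in the flow), not $\tilde{E}_n$ as the paper's notation section misstates.
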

\begin{proof}
   As in \cite{GuanLi2015} we know that $\sinh{d_{p_0}}\partial d_{p_0}$ is a conformal Killing field, i.e., \begin{align}
       \left\langle\bar{\nabla}_X\left(\sinh d_{p_0} \partial d_{p_0}\right), Y\right\rangle_{\mathbb{H}^{n+1}}=\cosh d_{p_0}\langle X, Y\rangle_{\mathbb{H}^{n+1}}\label{killing}
   \end{align} for any vector fields $X,Y$ on $\mathbb{H}^{n+1}$, by which we know that the spatial derivatives of $u$ are as follows: \begin{align}
        \nabla_k u & =\sinh d_{p_0}(x)\left\langle\partial d_{p_0}, h_k^i \partial_i\right\rangle_{\mathbb{H}^{n+1}} \label{firstdivu} \\ \nabla_k \nabla_l u & =\cosh d_{p_0}(x) h_{k l}+\sinh d_{p_0}(x)\left\langle\partial d_{p_0}, \nabla^j h_{k l} \partial_j-h_k^j h_{jl} \nu\right\rangle_{\mathbb{H}^{n+1}}.\label{seconddivu} 
   \end{align} 
   Combining (\ref{killing}), (\ref{firstdivu}) and (\ref{seconddivu}), we have 
   \begin{align*}
       \frac{\partial}{\partial t}u=&\frac{\partial}{\partial t} \langle \sinh d_{p_0}\partial d_{p_0}, \nu\rangle_{\mathbb{H}^{n+1}}=\cosh d_{p_0}\langle\frac{\partial}{\partial t} X, \nu\rangle_{\mathbb{H}^{n+1}}+\langle\sinh d_{p_0} \partial d_{p_0}, \frac{\partial}{\partial t}\nu\rangle_{\mathbb{H}^{n+1}}\\
       =&\cosh d_{p_0} (\phi(t)-\Psi(S))+\langle\sinh d_{p_0} \partial d_{p_0}, \nabla \Psi(S)\rangle_{\mathbb{H}^{n+1}}\\
       =&\cosh d_{p_0} (\phi(t)-\Psi(S))+\langle\sinh d_{p_0} \partial d_{p_0}, \dot{\Psi}^{kl}\nabla^j h_{kl}\partial_j\rangle_{\mathbb{H}^{n+1}}\\
       =&\dot{\Psi}^{k l}(S) \nabla_k \nabla_l u+\cosh d_{p_0}(x)\left(\phi(t)-\Psi(S)-\dot{\Psi}^{k l}(S) h_{k l}\right)\\&+\dot{\Psi}^{kl}(S) (h^2)_{k l} u\\
       =&\dot{\Psi}^{k l}(S) \nabla_k \nabla_l u+\cosh d_{p_0}(x)\left(\phi(t)-(1+\alpha)\Psi(S)-\dot{\Psi}^{k l}(S) \delta^k_l\right)\\&+(\dot{\Psi}^{kl}(S)(S^2)_{k l}+2\alpha\Psi(S)+\dot{\Psi}^{kl}(S)\delta^k_l) u.
   \end{align*}
\end{proof}
Similarly to \cite{WeiYangZhou2025}, we obtain the evolution equation of $d_{p_0}$.
\begin{lemma}
    \begin{align}
 \frac{\partial}{\partial t}d_{p_0}=&\dot{\Psi}^{kl}(S)\nabla_k\nabla_ld_{p_0}+\coth d_{p_0}\dot{\Psi}^{kl}(S)\nabla_kd_{p_0}\nabla_ld_{p_0}+\langle\partial d_{p_0}, \nu\rangle_{\mathbb{H}^{n+1}}\phi(t)\nonumber\\&+\left(\frac{\alpha}{\sinh d_{p_0}}u-\langle \partial d_{p_0},\nu\rangle_{\mathbb{H}^{n+1}}\right)\Psi(S)\nonumber\\&+\left(\frac{u}{\sinh d_{p_0}}-\coth d_{p_0}\right)\dot{\Psi}^{kl}(S)\delta^k_l.\label{dp0}\end{align}
\end{lemma}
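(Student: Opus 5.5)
The approach is to compute the time derivative of $d_{p_0}$ directly from the flow, and then compute $\dot{\Psi}^{kl}(S)\nabla_k\nabla_l d_{p_0}$ via the Hessian of the distance function in $\mathbb{H}^{n+1}$. The two expressions are then matched term by term.

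\textbf{Step 1 (time derivative).} By \eqref{partialtd},
\[
\partial_t d_{p_0}=(\phi(t)-\Psi(S))\langle\partial d_{p_0},\nu\rangle_{\mathbb{H}^{n+1}} .
\]

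\textbf{Step 2 (spatial Hessian).} For the restriction of $d_{p_0}$ to $M_t$ we have
\[
\nabla_k\nabla_l d_{p_0}=\overline{\nabla}^2 d_{p_0}(\partial_k,\partial_l)-h_{kl}\langle\partial d_{p_0},\nu\rangle_{\mathbb{H}^{n+1}} .
\]
In hyperbolic space the ambient Hessian is
\[
\overline{\nabla}^2 d_{p_0}=\coth d_{p_0}\bigl(\bar g-\mathrm{d}d_{p_0}\otimes \mathrm{d}d_{p_0}\bigr),
\]
which is equivalent to \eqref{killing}. Restricted to $M_t$, with $\nabla_k d_{p_0}=\langle\partial d_{p_0},\partial_k\rangle$, this gives
\[
\nabla_k\nabla_l d_{p_0}=\coth d_{p_0}\bigl(g_{kl}-\nabla_kd_{p_0}\nabla_ld_{p_0}\bigr)-h_{kl}\langle\partial d_{p_0},\nu\rangle_{\mathbb{H}^{n+1}} .
\]

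\textbf{Step 3 (contraction).} Contract with $\dot\Psi^{kl}(S)$. Write $h_{kl}=S_{kl}+\delta_{kl}$ in an orthonormal frame, and use the homogeneity identity $\dot\Psi^{kl}S_{kl}=\alpha\Psi(S)$ together with $\langle\partial d_{p_0},\nu\rangle=u/\sinh d_{p_0}$. This yields
\[
\dot\Psi^{kl}\nabla_k\nabla_l d_{p_0}=\coth d_{p_0}\,\dot\Psi^{kl}\delta_{kl}-\coth d_{p_0}\,\dot\Psi^{kl}\nabla_kd_{p_0}\nabla_ld_{p_0}-\frac{u}{\sinh d_{p_0}}\bigl(\alpha\Psi(S)+\dot\Psi^{kl}\delta_{kl}\bigr).
\]

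\textbf{Step 4 (solve and substitute).} Solve the identity of Step 3 for $0$ in the form
\[
0=\dot\Psi^{kl}\nabla_k\nabla_l d_{p_0}+\coth d_{p_0}\,\dot\Psi^{kl}\nabla_kd_{p_0}\nabla_ld_{p_0}+\frac{\alpha u}{\sinh d_{p_0}}\Psi+\Bigl(\frac{u}{\sinh d_{p_0}}-\coth d_{p_0}\Bigr)\dot\Psi^{kl}\delta_{kl},
\]
and add this to the expression from Step 1. Collecting terms gives exactly \eqref{dp0}: the $\phi$ term is $\langle\partial d_{p_0},\nu\rangle\phi$, and the $\Psi$ coefficient is $\frac{\alpha u}{\sinh d_{p_0}}-\langle\partial d_{p_0},\nu\rangle$.

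\textbf{Main obstacle.} The only delicate point is the sign and normalization convention in the Gauss formula for the Hessian on $M_t$. With outer normal $\nu$ and the convention $h_{kl}=\langle\overline\nabla_{\partial_k}\nu,\partial_l\rangle$, the term $-h_{kl}\langle\partial d_{p_0},\nu\rangle$ must carry the sign shown in Step 2. I would fix this by checking the formula on a geodesic sphere centered at $p_0$. There $d_{p_0}$ is constant on $M_t$, so its Hessian vanishes, which forces $h_{kl}=\coth r\, g_{kl}$, consistent with the formula.
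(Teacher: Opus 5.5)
Your proposal is correct and follows essentially the same route as the paper, which combines \eqref{partialtd} with the Hessian formula $\nabla_j\nabla_i d_{p_0}=\coth d_{p_0}\,(g_{ij}-\nabla_i d_{p_0}\nabla_j d_{p_0})-u\,h_{ij}/\sinh d_{p_0}$, writes $h_{ij}=S_{ij}+g_{ij}$, and uses $\dot\Psi^{kl}S_{kl}=\alpha\Psi$. The only difference is that the paper quotes this Hessian formula from Wei--Yang--Zhou, whereas you derive it from the Gauss formula and the ambient Hessian of $d_{p_0}$, with a correct sign check on geodesic spheres.
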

\begin{proof}
    By \cite[Lemma 2.2]{WeiYangZhou2025}, we have \begin{align}\label{hessd}
     \nabla_j \nabla_i d_{p_0}&=\operatorname{coth} d_{p_0}\left(g_{i j}-\nabla_i d_{p_0} \nabla_j d_{p_0}\right)-\frac{u h_{i j}}{\sinh d_{p_0}}\\
     &=\operatorname{coth} d_{p_0}\left(g_{i j}-\nabla_i d_{p_0} \nabla_j d_{p_0}\right)-\frac{u (S_{i j}+g_{ij})}{\sinh d_{p_0}}.
    \end{align}
    Combining \eqref{partialtd} and \eqref{hessd}, we obtain \eqref{dp0}.
\end{proof}
Now we prove that the shifted principal curvatures of solution $M_t$ to the flow \eqref{flow} have a lower bound depending on $n$, $M_0$, $\alpha$ and $T$ on finite time interval $[0,T)$.
\begin{thm}\label{thmlower}
    Let $M_t=\partial\Omega_t$ be a smooth solution to flow (\ref{flow})  on  $[0,T)$. If $M_0$ is strictly h-convex, then there exists $C_1(T)$ depending on $n$, $M_0$, $\alpha$ and $T$ such that the shifted principal curvatures of $M_t$ satisfy $$\frac{1}{C_1(T)}\leq \kappa_i(p)-1, \,\text{for all}\ p\in M_t,\ i=1, \dots, n.$$ 
\end{thm}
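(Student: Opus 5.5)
We will argue on short time windows. For each $t_0\in[0,T)$ we apply the maximum principle to
$$P(p,t)=\log b+A\cosh d_{p_0}-Bu$$
on $[t_0,\min\{T,t_0+\tau\})$. Here $p_0$ is the center of an inner ball of $\Omega_{t_0}$ and $\tau$ comes from Lemma \ref{constrainedball}. The quantity $b$ is the largest eigenvalue of $S^{-1}=(\mathcal{W}-I)^{-1}$, i.e.\ $b=\max_i(\kappa_i-1)^{-1}$. By Lemma \ref{constrainedball} and \eqref{ulow}, $M_t$ stays star-shaped about $p_0$ on this window with $u>2c_3$. Lemma \ref{C0estimate} bounds $d_{p_0}$ above and below by constants depending only on $n$, $M_0$. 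Hence a bound on $P$ on the window gives $b\le C$ there, with $C$ depending on $\sup b$ at time $t_0$. Iterating over roughly $T/\tau$ windows then gives a constant $C_1(T)$. Throughout, we argue in the open set of times where $S>0$; since the bound on $b$ is uniform, that set is all of $[0,T)$ by continuity.

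\textbf{Evolution of $b$.} We diagonalize at a maximum point, where $b=1/\lambda_n$ and $\lambda_n=\kappa_n-1$ is the smallest shifted curvature. Via the usual perturbation argument, Lemma \ref{lemmasij} yields a viscosity inequality of the form
$$\partial_t\log b\le \dot\Psi^{kl}\nabla_k\nabla_l\log b-\dot\Psi^{kl}\nabla_k\log b\nabla_l\log b-\lambda_n^{-1}\ddot\Psi^{kl,pq}\nabla_nS_{kl}\nabla_nS_{pq}+\dots$$
The zeroth order part is, after dividing by $-\lambda_n$,
$$-\bigl((1-\alpha)\Psi-\phi-\textstyle\sum_i\dot\Psi^{ii}\bigr)\lambda_n-\bigl(\dot\Psi^{kl}(S^2)_{kl}+2\Psi-2\phi\bigr)-\lambda_n^{-1}\dot\Psi^{kl}(S^2)_{kl}.$$
The last term, $-b\,\dot\Psi^{ii}\lambda_i^2=-b\tfrac{\alpha}{n}\Psi\sum_i\lambda_i$, is good (negative). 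The gradient terms are handled by the concavity of $\log\Psi$, i.e.\ by the standard inverse-concavity treatment of $\tilde E_n^{\alpha/n}$ as in Gauss curvature flows. The remaining terms are bounded by $C(\phi+\Psi+\sum_i\dot\Psi^{ii}\lambda_n)$, plus $2\phi$.

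\textbf{Other two terms.} From \eqref{dp0}, $\partial_t\cosh d_{p_0}$ contributes $\sinh d_{p_0}(\dots)$, whose dangerous piece is $-\sinh d_{p_0}\coth d_{p_0}\sum\dot\Psi^{ii}=-\cosh d_{p_0}\sum_i\dot\Psi^{ii}$ (up to the $u/\sinh d$ correction). Choosing $A$ large, $A\cosh d_{p_0}$ produces $-cA\sum_i\dot\Psi^{ii}$. From \eqref{u}, $-Bu$ produces $-B(\dot\Psi^{kl}(S^2)_{kl}+2\alpha\Psi)u\le -2Bc_3\alpha\Psi$, together with $-B\cosh d\,\phi$ and $+B\cosh d\,(1+\alpha)\Psi+B\cosh d\sum_i\dot\Psi^{ii}$. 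The bad term $+B\cosh d\sum_i\dot\Psi^{ii}$ is absorbed by taking $A\gg B$.

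\textbf{Combining.} At a large maximum of $P$, the critical point condition $\nabla\log b=-A\nabla\cosh d+B\nabla u$ lets us bound the leftover gradient terms by $C(A,B)\dot\Psi^{ii}$, which $A$ absorbs. We must also control $\phi$. Since $\phi\ge 0$, the $-B\cosh d\,\phi$ term only helps, and the $+2\phi$ from $\log b$ is absorbed by it once $B$ is large. The positive multiples of $\Psi$ must be beaten by $-b\,\tfrac{\alpha}{n}\Psi\sum_i\lambda_i\le-\tfrac{\alpha}{n}\Psi$, up to the sign of $u$-terms, once $b$ is large.

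\textbf{Main obstacle.} I expect this last balancing of $\Psi$ and $\phi$ terms, with no available bound on $\phi$, to be the main difficulty. What I would try first is to rely on $\sum_i\dot\Psi^{ii}\ge c\,\Psi\, b$ (because $\dot\Psi^{nn}=\tfrac{\alpha}{n}\Psi/\lambda_n$). This makes the good $-A\sum_i\dot\Psi^{ii}$ term dominate every $\Psi$ term when $b$ is large. The conclusion would be $\partial_t P<0$ at any new large maximum, which gives the bound.
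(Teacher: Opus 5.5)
Your setup is the paper's: the function $P=\log b+A\cosh d_{p_0}-Bu$, the windows of length $\tau$, the iteration, and the mechanism $\sum_i\dot\Psi^{ii}=\frac{\alpha}{n}\Psi\sum_i\lambda_i^{-1}\ge\frac{\alpha}{n}\Psi b$ beating the $O(\Psi)$ terms. However, your handling of the global term $\phi$ has a genuine gap.

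\textbf{The gap.} You dropped the $\phi$-contribution of $A\cosh d_{p_0}$. Since $\partial_t\cosh d_{p_0}=(\phi-\Psi)u+\cdots$, the full coefficient of $\phi$ in $(\partial_t-\dot\Psi^{kl}\nabla_k\nabla_l)P$ at the maximum point is $2+b^{-1}+Au-B\cosh d_{p_0}$. With your choice $A\gg B$ this is positive, because $u>2c_3$ and $\cosh d_{p_0}$ is bounded. Such a term cannot be absorbed, for two reasons:
\begin{enumerate}
\item $\phi$ has no a priori upper bound on $[0,T)$ for general $\alpha$. Jensen's inequality with $\int_{M_t}\tilde E_n\le\omega_n$ bounds it only when $\alpha\le n$, and the curvature bound of Theorem \ref{thmupper} is itself derived from the present estimate.
\item Every good term except $-B\cosh d_{p_0}\,\phi$ carries a factor $\Psi$ evaluated at the maximum point. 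That factor can be negligible against the global average $\phi$: for $\alpha>n$, letting $\lambda_n\to0$ with the other $\lambda_i$ bounded gives $\Psi b\to0$.
\end{enumerate}
So the step ``the $+2\phi$ from $\log b$ is absorbed by $-B\cosh d_{p_0}\,\phi$ once $B$ is large'' fails with $A\gg B$.

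\textbf{The repair.} What is needed is a two-sided choice of $A$, which is exactly what the paper does.
\begin{itemize}
\item The exact coefficient of $\sum_i\dot\Psi^{ii}$ produced by $A\cosh d_{p_0}-Bu$ is $(A-B)(u-\cosh d_{p_0})$. So you only need $A>B$, not $A\gg B$.
\item The $\phi$-coefficient needs $A<B\cosh d_{p_0}/u$.
\item These are compatible because on the window $\cosh d_{p_0}/u\ge\coth d_{p_0}\ge c_4>1$ and $\cosh d_{p_0}-u\ge e^{-d_{p_0}}\ge c_5>0$, by the diameter bound from Lemma \ref{C0estimate}.
\item Choose $B$ with $(c_4-1)B>1+\frac{3}{2c_3}$ and $A\in(B+1,\,c_4B-\frac{3}{2c_3})$. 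Using $u>2c_3$, this gives $2+Au-B\cosh d_{p_0}<-1$ and $(A-B)(u-\cosh d_{p_0})<-c_5$.
\item Then for $b\ge1$ all $\phi$-terms are nonpositive. Also $\frac{\alpha}{n}\Psi\frac{\sigma_{n-1}}{\sigma_n}(b^{-1}-c_5)\le\frac{\alpha}{n}\Psi(1-c_5b)$, which dominates the $O(\Psi)$ terms once $b$ is large.
\end{itemize}

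\textbf{Gradient terms.} You do not need the critical-point condition here. $P$ is a sum, so no cross gradient terms arise. The gradient terms of $\log b$ are nonpositive directly from $\dot\Psi^{kl}=\frac{\alpha}{n}\Psi(S^{-1})^{kl}$, the explicit form of $\ddot\Psi$, and Codazzi. With $A$ now capped by $c_4B$, an estimate of the form ``$C(A,B)\sum_i\dot\Psi^{ii}$, absorbed by taking $A$ large'' would not be available anyway.
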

\begin{proof}
     Firstly, we remark that it suffices to prove this theorem under the assumption that $M_t$ is strictly h-convex along the flow (\ref{flow}). If the theorem holds under the strict h-convexity assumption, then by the smoothness of the solution we can denote by $[0,T^*)\subseteq[0,T)$ the maximal time interval on which $M_t$ preserves strict h-convexity. Then a standard extension argument yields $T^*=T$.

Now we assume that $M_t$ is a strictly h-convex solution to the flow \eqref{flow}. Denote by $B:=\{B^j_i\}$ the inverse matrix of $S:=W-I$. Once we obtain an upper bound on $B$ depending on $n$, $M_0$, $\alpha$ and $T$, the proof is complete. We first derive the evolution equation of $B$. We have \begin{align}
    \frac{\partial}{\partial t}B^j_i&=-B^r_iB^j_s\frac{\partial}{\partial t}S^s_r,\label{ptbij}\\
    \nabla_l B^j_i&=-B^r_iB^j_s\nabla_lS^s_r,\label{yicibij}\\
    \nabla_k \nabla_l B_i^j&=-B_s^j B_i^r \nabla_k \nabla_l S^s_r-B_s^j \nabla_k B_i^r \nabla_l S^s_r-B^r_i\nabla_kB^j_s\nabla_l S^s_r.\label{ercibij}
\end{align}
By (\ref{ptbij}), (\ref{yicibij}), (\ref{ercibij}) and (\ref{Sij}), we have \begin{align}
\frac{\partial}{\partial t} B^j_i=&-B^r_iB^j_s\frac{\partial}{\partial t}S^s_r\nonumber\\
=&\dot{\Psi}^{k l}(S)\nabla_k\nabla_l B^j_i+2\dot{\Psi}^{k l}(S)B^q_i\nabla_k S^p_q \nabla_l B^j_p-\ddot{\Psi}^{kl,pq}B^j_s B^r_i\nabla_r S^k_l \nabla^s S^p_q\nonumber\\
&-\dot{\Psi}^{k l}(S)(S^2)_{kl} (B^2)^j_i-(\dot{\Psi}^{k l}(S)(S^2)_{kl}+2\Psi(S)-2\phi)B^j_i\nonumber\\
&-((1-\alpha)\Psi(S)-\phi-\dot{\Psi}^{k l}(S)\delta^k_l)\delta^j_i.\label{Bij}
\end{align}
For any $t_0\in[0,T)$, let $p_0$ be a center of inner ball of $M_{t_0}$.
Let $\lambda=\cosh d_{p_0}$. By \eqref{dp0} we have \begin{align}
    \frac{\partial}{\partial t}\lambda=\dot{\Psi}^{kl}(S)\nabla_k\nabla_l\lambda+\phi(t)u+(\alpha-1)u\Psi+(u-\lambda)\dot{\Psi}^{kl}(S)\delta^k_l.\label{lambda}
\end{align} 
 Let  $$b(p,t):=\max\{B^i_j(p,t) \eta_i \eta_j:\sum\limits_{i=1}\limits^{n}\eta^2_i=1\}.$$    We consider 
 \[P(p,t)=\log{b(p,t)}+ A\lambda-Bu, t\in[t_0,\min{T,t_0+\tau}),\]
 where $A$, $B$ are positive constants which will be chosen later.  Suppose that $P$ attains its maximum at $(\overline{p},\overline{t})\in M\times [t_0,\min\{T,t_0+\tau\})$. We choose a local orthonormal frame $e_1, \dots, e_n$ around $\overline{p}$ such that at $(\overline{p},\overline{t})$ we have $g_{i j}=\delta_{i j}$, $\ h_{i j}=\kappa_i \delta_{i j}$ and 
 $$b(\overline{p},\overline{t})=B^1_1(\overline{p},\overline{t}).$$
Let
\[\tilde{P}(p,t)=\log B^1_1(p,t)+A\lambda-Bu.\]
We have $\tilde{P}$ attains maximum at $(\overline{p},\overline{t})$. Moreover, the derivatives of $\tilde{P}$ and $P$ at $(\overline{p},\overline{t})$ coincide. 
%By the classical method of taking locally smooth lower supports, together with choosing an orthonormal frame at a point, we consider the evolution equation of $B^1_1$ at this point at time $t_0$ where in under this orthonormal frame, the value of $B^1_1$ equals the maximum radius of curvature, we have 
By (\ref{Bij}) we have 
\begin{align}
&\frac{\partial}{\partial t} \log{B^1_1}\\=&\frac{1}{B^1_1}\dot{\Psi}^{k l}(S)\nabla_k\nabla_l B^1_1+2\dot{\Psi}^{k l}(S)\nabla_k S^p_1 \nabla_l B^1_p\nonumber\\
&-\ddot{\Psi}^{kl,pq}B^1_1 \nabla_1 S^k_l \nabla^1 S^p_q-\frac{1}{B^1_1}\dot{\Psi}^{k l}(S)(S^2)_{kl} (B^2)^1_1\nonumber\\
&-\frac{1}{B^1_1}(\dot{\Psi}^{k l}(S)(S^2)_{kl}+2\Psi(S)-2\phi)B^1_1\nonumber\\
&-\frac{1}{B^1_1}((1-\alpha)\Psi(S)-\phi-\dot{\Psi}^{k l}(S)\delta^k_l)\nonumber
    \\
=&\dot{\Psi}^{k l}(S)\nabla_k\nabla_l \log{B^1_1}+\frac{\dot{\Psi}^{k l} \nabla_k B^1_1 \nabla_l B^1_1}{(B^1_1)^2}+2\dot{\Psi}^{k l}(S)B^1_1\nabla_k S^p_1 \nabla_l B^1_p\nonumber\\
&-\ddot{\Psi}^{kl,pq}B^1_1 \nabla_1 S^k_l \nabla^1 S^p_q\nonumber\\
&-\dot{\Psi}^{k l}(S)(S^2)_{kl} B^1_1-(\dot{\Psi}^{k l}(S)(S^2)_{kl}+2\Psi(S)-2\phi)\nonumber\\
&-\frac{1}{B^1_1}((1-\alpha)\Psi(S)-\phi-\dot{\Psi}^{k l}(S)\delta^k_l).
\end{align}
Combining the evolution equation (\ref{u}) and (\ref{lambda}), we obtain that at $(\overline{p},\overline{t})$ there holds 
\begin{align*}
&\frac{\partial}{\partial t}P-\dot{\Psi}^{kl}(S)\nabla_k\nabla_l P\\
=&\dot{\Psi}^{k l}(S)\nabla_k\nabla_l \log{B^1_1}+\frac{\dot{\Psi}^{k l} \nabla_k B^1_1 \nabla_l B^1_1}{(B^1_1)^2}+2\dot{\Psi}^{k l}(S)\nabla_k S^p_1 \nabla_l B^1_p\\&-\ddot{\Psi}^{kl,pq}B^1_1 \nabla_1 S^k_l \nabla^1 S^p_q+\phi\left(2+\frac{1}{B^1_1}+Au-B\lambda\right)\\&+\dot{\Psi}^{k l}(S)(S^2)_{kl}(-B^1_1-1-Bu)\\&+\Psi(S)\left(-2-\frac{1}{B^1_1}(1-\alpha)+A(\alpha-1)u-B(2\alpha u-(1+\alpha)\lambda\right)
        \\&+\dot{\Psi}^{k l}(S)\delta^k_l\left(\frac{1}{B^1_1}+(A-B)(u-\lambda)\right).
    \end{align*}
    By a direct calculation, we have \begin{align}\dot{\Psi}^{kl}(S)(S^2)_{kl}=&\alpha\Psi(S)E_1(S),\label{pingfang}\\
\dot{\Psi}^{kl}(S)\delta^k_l=&\frac{\alpha}{n}\Psi(S)\frac{\sigma_{n-1}(S)}{\sigma_n(S)}.\label{lingci} 
\end{align}
    Hence at $(\overline{p},\overline{t})$ there holds
\begin{align*}
&\frac{\partial}{\partial t}P-\dot{\Psi}^{kl}(S)\nabla_k \nabla_l P\nonumber
\\=&\frac{\dot{\Psi}^{k l} \nabla_k B^1_1 \nabla_l B^1_1}{(B^1_1)^2}+2\dot{\Psi}^{k l}(S)\nabla_k S^p_1 \nabla_l B^1_p-\ddot{\Psi}^{kl,pq}B^1_1\nabla_1 S^k_l \nabla^1 S^p_q\nonumber\\
&+\phi\left(2+\frac{1}{B^1_1}+Au-B\lambda\right)\\&+\Psi(-2-\frac{1}{B^1_1}(1-\alpha)+A(\alpha-1)u-B(2\alpha u-(1+\alpha)\lambda)\nonumber\\
&+\alpha E_1(S)(-B^1_1-1-Bu)+\frac{\alpha}{n}\frac{\sigma_{n-1}(S)}{\sigma_n(S)}(\frac{1}{B^1_1}+(A-B)(u-\lambda))).\end{align*}
By Lemma \ref{C0estimate}, there exist constants $c_4>1$, $c_5>0$ depending on $n$ and $M_0$ such that $c_4<\frac{\lambda}{u}$ and $\lambda-u>c_5$. 
Hence we can choose $B>0$ depending on $n$, $M_0$ and $\alpha$ such that $B+1<c_4B-\frac{3}{2c_3}<\frac{\lambda}{u}B-\frac{3}{2c_3}$. Choose a constant $A$ such that $A\in(B+1, c_4B-\frac{3}{2c_3})$. Then by \eqref{ulow} we have
\begin{align*}
    2+Au-B\lambda&<-1,\\
    A-B>1.
\end{align*}
Since our aim is to derive an upper bound on $B^1_1$, without loss of generality we assume that $B^1_1>\frac{1}{c_5}$.
Noticing that $\frac{\sigma_{n-1}(S)}{\sigma_n(S)}\geq B^1_1$, we have 
\begin{align}
&\frac{\partial}{\partial t}P-\dot{\Psi}^{kl}(S)\nabla_k \nabla_l P\nonumber
\\\leq&\frac{\dot{\Psi}^{k l} \nabla_k B^1_1 \nabla_l B^1_1}{(B^1_1)^2}+2\dot{\Psi}^{k l}(S)\nabla_k S^p_1 \nabla_l B^1_p-\ddot{\Psi}^{kl,pq}B^1_1\nabla_1 S^k_l \nabla^1 S^p_q\nonumber\\
&+\phi\left(\frac{1}{B^1_1}-1\right)\\
&+\Psi(-2-\frac{1}{B^1_1}(1-\alpha)+A(\alpha-1)u-B(2\alpha u-(1+\alpha)\lambda)\nonumber\\
&+\alpha E_1(S)(-B^1_1-1-Bu)+\frac{\alpha}{n}B^1_1(\frac{1}{B^1_1}-c_5)).\label{P}\end{align}
 We divide the the right side of equation (\ref{P}) into two parts as follows: \begin{align*}
    Q_1:=&\frac{\dot{\Psi}^{k l} \nabla_k B^1_1 \nabla_l B^1_1}{(B^1_1)^2}+2\dot{\Psi}^{k l}(S)\nabla_k S^p_1 \nabla_l B^1_p-\ddot{\Psi}^{kl,pq}B^1_1\nabla_1 S^k_l \nabla^1 S^p_q\nonumber,\\
    Q_2:=&\phi\left(\frac{1}{B^1_1}-1\right)+\Psi(-2-\frac{1}{B^1_1}(1-\alpha)+A(\alpha-1)u-B(2\alpha u-(1+\alpha)\lambda)\nonumber\\
&+\alpha E_1(S)(-B^1_1-1-Bu)+\frac{\alpha}{n}B^1_1(\frac{1}{B^1_1}-c_5)).
\end{align*}
 By Lemma \ref{C0estimate}, there exists a constant $c_6>\frac{1}{c_5}$ depending on $M_0$, $n$ and $\alpha$ such that $B^1_1>c_6$ implies $Q_2<0$. 
 
 Noticing that $$\begin{aligned}&\dot{E}_n^{kl}(S)=E_n(S)B^k_l, \\ &\ddot{E}_n^{kl,pq}(S)=E_n(S)(B^k_l B^p_q-B^{kp} B_{ql}),\\
& \nabla_k B^1_1=-(B^1_1)^2\nabla_k S_{11},\\
&\nabla_k B^r_1=-B^1_1 B^r_r \nabla_k S_{r1},\\
\end{aligned}$$
 and combining the Codazzi equation, we have 
\begin{align*}
    Q_1&=\frac{\alpha}{n}\tilde{E}_n^\frac{\alpha}{n}\left((B^1_1)^2B^{kk}|\nabla_kS^1_1|^2-B^1_1B^k_kB^p_p|\nabla_kS_{1p}|^2-\frac{\alpha}{n}B^1_1(B^{kk}\nabla^1S_{kk})^2\right)\\
       &\leq \left((B^1_1)^2B^{kk}|\nabla_kS^1_1|^2-B^1_1B^k_kB^p_p|\nabla_kS_{1p}|^2\right) \leq 0.
\end{align*}
By arguments above, if $B^1_1>c_6$, there holds $$\frac{\partial}{\partial t}P-\dot{\Psi}(S)^{kl}\nabla_k\nabla_l P<0.$$
Then we have $P\leq \max\{\max\limits_{p\in M}P(p,t_0), c^6\}$, from which we know that there exists $c_7>0$ depending on $n$, $M_0$ and $\alpha$ such that $$\log b(p,t)\leq \max\{\max\limits_{p\in M}\log b(p,t_0), c^7\}.$$ 
Therefore,  there exists a constant $c_8>0$ depending on $n$, $M_0$ and $\alpha$ such that \begin{align*}
    \max\limits_{(p,t)\in M\times[t_0,\max\{t_0+\tau,T\})} b(p,t)\leq \max\{\max\limits_{p\in M)} b(p,t_0),c_8\} e^{c_8}.
\end{align*}
 Starting from $t=0$ and iterating the above estimates, we obtain the bound required after at most $\left[\frac{T}{\tau}\right]+1$ steps.
\end{proof}

Having established the lower bound on shifted curvatures of $M_t$ on finite time intervals, we now adapt Tso's technique \cite{Tso} to establish the upper bound on $\tilde{E}_n$.
\begin{thm}\label{thmupper}
    Let $M_t=\partial\Omega_t$ be a smooth solution to flow (\ref{flow})  on  $[0,T)$. If $M_0$ is strictly h-convex, then there exists $C_2(T)$ depended on $n$, $M_0$, $\alpha$ and $T$ such that  the shifted principal curvatures of $M_t$ satisfy $$\tilde{E}_n\leq\frac{1}{C_2(T)}, \,\text{for all}\ p\in M_t \, ,i=1, \dots, n.$$ 
\end{thm}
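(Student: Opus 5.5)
We would follow Tso's technique: on each short interval $[t_0,\min\{T,t_0+\tau\})$ from Lemma \ref{constrainedball}, take $p_0$ the center of an inner ball of $\Omega_{t_0}$. By \eqref{ulow} the support function then satisfies $u>2c_3$ on that interval. We study the auxiliary function
\[
Q(p,t)=\frac{\Psi(S)-\phi(t)}{u-c_3},
\]
or, equivalently for our purposes, $\Psi/(u-c_3)$. Since $\phi\ge 0$, an upper bound for $Q$ gives an upper bound for $\Psi=\tilde E_n^{\alpha/n}$, hence for $\tilde E_n$. By Lemma \ref{C0estimate}, $u$ and $\lambda=\cosh d_{p_0}$ are bounded above and below on the interval, uniformly in $t_0$.

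The key step is to combine the evolution equation for $\Psi$ (the lemma after Lemma \ref{lemmasij}) with \eqref{u}. Writing $w=\Psi-\phi$, that equation reads
\[
\partial_t w=\dot\Psi^{kl}\nabla_k\nabla_l w+w\,\dot\Psi^{kl}(S^2)_{kl}+2\alpha\Psi w-\phi'(t).
\]
The term $\phi'$ is awkward, so we would rather work directly with $\Psi$:
\[
\partial_t\Psi=\dot\Psi^{kl}\nabla_k\nabla_l\Psi+(\Psi-\phi)\bigl(\dot\Psi^{kl}(S^2)_{kl}+2\alpha\Psi\bigr).
\]
At a maximum point of $Q=\Psi/(u-c_3)$ the gradient terms combine as usual, $\nabla\Psi=Q\nabla u$, leaving
\[
\partial_t Q-\dot\Psi^{kl}\nabla_k\nabla_l Q\le \frac{1}{u-c_3}\Bigl[(\Psi-\phi)(\alpha\Psi E_1(S)+2\alpha\Psi)-Q\,\partial_t u+Q\,\dot\Psi^{kl}\nabla_k\nabla_l u\Bigr].
\]
Inserting \eqref{u} together with \eqref{pingfang}, the dangerous positive term $(\Psi-\phi)\alpha\Psi E_1(S)$ is cancelled by $-Q\,u\,\dot\Psi^{kl}(S^2)_{kl}=-\frac{u}{u-c_3}\alpha\Psi^2E_1(S)$. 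Because $u/(u-c_3)>1$, a negative multiple $-\frac{c_3}{u-c_3}\alpha\Psi^2E_1$ survives.

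We also need the good term $+Q\cosh d_{p_0}(1+\alpha)\Psi$ from \eqref{u}. It enters with a positive sign, since $\partial_t u$ contains $-(1+\alpha)\lambda\Psi$ and we subtract $Q\partial_t u$, so the resulting contribution is negative: $-\frac{Q}{u-c_3}\lambda\bigl(\phi-(1+\alpha)\Psi\bigr)$ gives $+Q(1+\alpha)\lambda\Psi$ with a minus overall. The remaining terms are $2\alpha\Psi^2$, $\phi$-terms, and $\dot\Psi^{kl}\delta_{kl}$-terms. By \eqref{lingci}, $\dot\Psi^{kl}\delta_{kl}=\frac{\alpha}{n}\Psi\,\sigma_{n-1}/\sigma_n$, and Theorem \ref{thmlower} bounds $\sigma_{n-1}/\sigma_n\le nC_1(T)$ on $[0,T)$. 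Moreover $E_1(S)\ge \tilde E_n^{1/n}=\Psi^{1/\alpha}$. Hence, at a large maximum of $Q$, the right-hand side is bounded by
\[
C(T)\,Q^2\bigl(1+Q+\ldots\bigr)-c\,Q^{2+1/\alpha}.
\]
Here $\phi$ is bounded in terms of $\max\Psi$, and we absorb it by using $\phi\le \max\Psi\lesssim Q$. The superlinear negative term dominates once $Q$ is large, giving $Q\le C(T)$ on each interval. Iterating over at most $[T/\tau]+1$ intervals yields the bound.

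The main obstacle is checking the sign bookkeeping so that a strictly negative term of order $\Psi^{2}E_1\gtrsim \Psi^{2+1/\alpha}$ survives and dominates every positive term. The most delicate positive term is $2\alpha\Psi^2$, which is only quadratic, and for large $\alpha$ the gain $\Psi^{1/\alpha}$ is weak but still superlinear, so this should suffice. If the $\phi$-terms cause trouble, we will first try using $\phi\ge0$ to discard them where they have a favorable sign, and otherwise bound $\phi\le\max_{M_t}\Psi$.
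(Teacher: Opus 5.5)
Your overall strategy is the same as the paper's. You use Tso's quantity $Q=\Psi/(u-c_3)$ on the intervals of Lemma \ref{constrainedball}, and the cancellation of $Q\,\dot\Psi^{kl}(S^2)_{kl}$ against $-Q\,u\,\dot\Psi^{kl}(S^2)_{kl}/(u-c_3)$, which leaves $-c_3\alpha Q^2E_1(S)\le -c\,Q^{2+1/\alpha}$. You control $\dot\Psi^{kl}\delta_{kl}$ through Theorem \ref{thmlower} and iterate in time. The step that fails is your closing estimate. You bound the $\phi$-terms by $\phi\le\max\Psi\lesssim Q$, which produces a positive term of order $C(T)Q^3$, and then claim that $-cQ^{2+1/\alpha}$ dominates $C(T)Q^2(1+Q+\dots)$. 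For $\alpha>1$ we have $2+1/\alpha<3$, so this is false, and at $\alpha=1$ it depends on the constants. As written, your argument therefore only covers $\alpha<1$, while the theorem is stated for every $\alpha>0$.

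The missing observation is that you need no upper bound on $\phi$ at all. With $\lambda=\cosh d_{p_0}$, \eqref{u}, \eqref{pingfang} and \eqref{lingci} give, at a spatial maximum of $Q$,
\begin{align*}
\partial_t Q-\dot\Psi^{kl}\nabla_k\nabla_l Q={}&-\frac{\phi}{u-c_3}\bigl(\alpha\Psi E_1(S)+2\alpha\Psi+\lambda Q\bigr)+\bigl((1+\alpha)\lambda-2\alpha c_3\bigr)Q^2\\
&-c_3\alpha Q^2E_1(S)+\frac{\alpha}{n}(\lambda-u)\frac{\sigma_{n-1}}{\sigma_n}Q^2.
\end{align*}
Every $\phi$-term is nonpositive, so it can simply be discarded, which is what the paper does. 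After that, all positive terms are $O(C(T)Q^2)$ and the $Q^{2+1/\alpha}$ term wins for every $\alpha>0$. Your ``fallback'' of discarding favorably signed $\phi$-terms is therefore not optional; it is the argument.

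There are also some smaller slips. Each is harmless, either because the term is quadratic or because you did not use that variant, but they show the bookkeeping needs redoing.
\begin{enumerate}
\item The contribution $(1+\alpha)\lambda Q^2$ coming from $-Q\,\partial_t u/(u-c_3)$ is positive, so it is a bad term, not a good one.
\item The term $2\alpha\Psi^2/(u-c_3)$ does not survive: it combines with $-2\alpha\Psi^2u/(u-c_3)^2$ into $-2\alpha c_3Q^2$.
\item For the variant $(\Psi-\phi)/(u-c_3)$, your remark that $\phi\ge 0$ lets a bound on $Q$ bound $\Psi$ goes the wrong way, since $\Psi\le Q(u-c_3)+\phi$.
\end{enumerate}
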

\begin{proof}

For any $t_0\in[0,T)$, let $p_0$ be a center of an inner ball of $M_{t_0}$. By argument above we know that the support function $u$ of $M_0$ with respect to $p_0$  satisfies $2c_3<u$ for any $t\in \left[t_0, \min \left\{T, t_0+\tau\right\}\right)$. 

Let $Q:=\frac{\Psi(S)}{u-c_3}$. We have
\begin{align}
    \nonumber
\frac{\partial}{\partial t} Q=&\frac{1}{u-c_3}\frac{\partial}{\partial t} \Psi(S)-\frac{\Psi}{(u-c_3)^2}\frac{\partial}{\partial t}u\\ \nonumber
    =&\frac{1}{u-c_3}\dot{\Psi}^{kl}\nabla_k\nabla_l\Psi-\frac{\Psi}{(u-c_3)^2}\dot{\Psi}^{kl}\nabla_k\nabla_l u\nonumber\\
    &-\frac{\phi(t)}{u-c_3}\left(\dot{\Psi}^{kl}(S^2)_{kl}+2\alpha\Psi+\cosh d_{p_0} Q\right)\nonumber\\
    &+Q^2\left(-2\alpha c_3+(1+\alpha)\cosh d_{p_0}\right)\nonumber\\
    &-c_3Q\dot{\Psi}^{kl}(S)(S^2)_{kl}+Q\frac{\cosh d_{p_0}-u}{u-c_3}\dot{\Psi}^{kl}\delta^k_l.\label{evoofQ}
\end{align}
 By a direct calculation 
 \begin{align*}
   \dot{\Psi}^{kl}(S)\nabla_k\nabla_l Q=&\frac{1}{u-c_3}\dot{\Psi}^{kl}(S)\nabla_k\nabla_l\Psi(S)\\
   &-\frac{\Psi(S)}{(u-c_3)^2}\dot{\Psi}^{kl}(S)\nabla_k\nabla_l u-\frac{2}{u-c_3}\dot{\Psi}^{kl}(S)\nabla_kQ\nabla_lu,  
 \end{align*}
 we have
 \begin{align*}
     \frac{\partial}{\partial t}Q=&\dot{\Psi}^{kl}(S)\nabla_k\nabla_l Q+\frac{2}{u-c_3}\dot{\Psi}^{kl}(S)\nabla_k Q\nabla_l u \\&-\frac{\phi(t)}{u-c_3}\left(\dot{\Psi}^{kl}(S^2)_{kl}+2\alpha\Psi+\cosh d_{p_0} Q\right)\\&+Q^2\left(-2\alpha c_3+(1+\alpha)\cosh d_{p_0}\right)-c_3Q\dot{\Psi}^{kl}(S)(S^2)_{kl}\\&+Q\frac{\cosh d_{p_0}-u}{u-c_3}\dot{\Psi}^{kl}\delta^k_l\\
     \leq& \dot{\Psi}^{kl}(S)\nabla_k\nabla_l Q+\frac{2}{u-c_3}\dot{\Psi}^{kl}(S)\nabla_k Q\nabla_l u+Q^2(1+\alpha) \cosh d_{p_0}\\&+Q\frac{\cosh d_{p_0}}{(u-c_3)^2}\dot{\Psi}^{kl}(S)\delta^k_l-Q\frac{c_3}{u-c_3}\dot{\Psi}^{kl}(S)(S^2)_{kl}.
 \end{align*}
Plugging \eqref{pingfang} and \eqref{lingci} into the equation and using Lemma \ref{C0estimate} together with \eqref{u}, we conclude that there exist constants $c_9>0$ and $c_{10}>0$ depending on $n$, $M_0$ and $\alpha$ such that \begin{align}
   \nonumber \frac{\partial}{\partial t}Q \nonumber \leq &\dot{\Psi}^{kl}(S)\nabla_k\nabla_l Q+\frac{2}{u-c_3}\dot{\Psi}^{kl}(S)\nabla_k Q\nabla_l u\\&+Q^2((1+\alpha)\cosh d_{p_0}+c_9\frac{\sigma_{n-1}}{\sigma_n}-c_{10} Q^\frac{1}{\alpha})\nonumber\\
\leq&\dot{\Psi}^{kl}(S)\nabla_k\nabla_l Q+\frac{2}{u-c_3}\dot{\Psi}^{kl}(S)\nabla_k Q\nabla_l u\nonumber\\
&+Q^2((1+\alpha)\cosh d_{p_0}+nc_9C_1(T)-c_{10} Q^\frac{1}{\alpha}).\label{upperboundofPhi}
\end{align} 
Then there exists a constant $c(T)$ depending on $M_0$, $n$, $\alpha$ and $T$ such that $Q>c(T)$ implies $$\frac{\partial}{\partial t}Q-\dot{\Psi}^{kl}\nabla_k\nabla_l Q-\frac{2}{u-c_3}\dot{\Psi}^{kl}(S)\nabla_k Q\nabla_l u\leq -Q^2.$$ Then, by using maximum principle, a standard  argument  yields the required estimates.
\end{proof}

Let $M_t$ be a solution to the flow \eqref{flow} with strictly h-convex initial hypersurface $M_0$.  For any finite time interval $[0,T)$, combining Lemma \ref{C0estimate}, Theorem \ref{thmlower} and Theorem \ref{thmupper}, we can obtain the $C^0$, $C^1$ and $C^2$ estimates of $M_t$ depending on $n, M_0,\alpha$ and $T$.  By the inverse concavity of curvature function $\tilde E_n^\frac{1}{n}$, we can apply similar arguments in \cites{AndrewsWei2018} to obtain the higher order regularity depending on  $n, M_0,\alpha$ and $T$. Then, by a classical extension argument (see \cite[section 4]{WeiYangZhou2025}), we obtain the following long-time existence of the flow \eqref{flow}.
\begin{thm}
     Let  $M_t$ be a smooth strictly h-convex solution to flow \eqref{flow}. Then the solution $M_t$ exists for all time $t\in [0,\infty)$.
\end{thm}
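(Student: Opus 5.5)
The plan is the standard continuation argument. Let $T_{\max}\in(0,\infty]$ be the maximal existence time of the smooth solution to \eqref{flow}; short-time existence is classical. The operator $\Psi(S)=\tilde E_n^{\alpha/n}(S)$ is elliptic on strictly h-convex hypersurfaces, because $S=\mathcal W-I$ is positive definite there. We argue by contradiction and assume $T_{\max}<\infty$.

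The first step is to show that on $[0,T_{\max})$ the solution stays strictly h-convex with uniform bounds depending only on $n$, $M_0$, $\alpha$ and $T_{\max}$. Theorem \ref{thmlower}, which already includes the extension argument for preserving strict h-convexity, gives $\kappa_i-1\ge 1/C_1(T_{\max})$ on $[0,T_{\max})$. Theorem \ref{thmupper} gives $\tilde E_n\le 1/C_2(T_{\max})$. Since every shifted principal curvature is bounded below, these combine to an upper bound on each $\kappa_i-1$: for instance $\kappa_1-1\le \tilde E_n\prod_{i\ge2}(\kappa_i-1)^{-1}\le C_1^{\,n-1}/C_2$. Hence the shifted principal curvatures lie in a compact subset of $(0,\infty)$, and the equation is uniformly parabolic on $[0,T_{\max})$. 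Lemma \ref{C0estimate} controls the inner and outer radii. Lemma \ref{constrainedball}, together with \eqref{ulow}, then lets us write $M_t$, on each interval $[t_0,t_0+\tau)$, as a radial graph over a sphere centred at $p_0$ with support function $u$ bounded below. This yields $C^0$ and $C^1$ bounds for the radial function, and the curvature bounds supply the $C^2$ bounds.

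The second step is higher regularity. In the radial graph parametrization the flow becomes a scalar, fully nonlinear, uniformly parabolic equation for the radial function. The map $S\mapsto \tilde E_n^{1/n}(S)$ is concave in $S$, and it is also inverse-concave, as noted before the statement. Concavity is what we need for the Krylov–Safonov and Evans–Krylov theory, which gives $C^{2,\beta}$ estimates. The nonlocal term $\phi(t)$ is bounded by the curvature bounds and depends only on $t$, so it causes no trouble. Schauder theory and bootstrapping then give $C^k$ bounds for every $k$, uniformly on $[0,T_{\max})$. The main obstacle is this $C^{2,\beta}$ step. The speed is $(\tilde E_n^{1/n})^{\alpha}$ rather than the concave function itself, so I would rewrite the equation in terms of $F=\tilde E_n^{1/n}$, using the evolution of $\Psi$ in the lemma above, or argue as in \cite{AndrewsWei2018} to get a concave operator after this change of variable. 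The time-localization coming from $p_0$ changing between intervals is harmless, because the constants do not depend on $t_0$.

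Finally, using these uniform estimates, Arzelà–Ascoli shows that $M_t$ converges smoothly as $t\to T_{\max}$ to a smooth, strictly h-convex hypersurface $M_{T_{\max}}$; the curvature lower bound passes to the limit. Restarting the flow at $M_{T_{\max}}$ by short-time existence extends the solution beyond $T_{\max}$, which contradicts maximality, as in \cite[section 4]{WeiYangZhou2025}. Hence $T_{\max}=\infty$, and the solution remains smooth and strictly h-convex for all $t\in[0,\infty)$.
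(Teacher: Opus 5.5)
Your proposal is correct and follows essentially the same route as the paper. You get $C^0$--$C^2$ bounds on $[0,T)$ from Lemma \ref{C0estimate}, Theorem \ref{thmlower} and Theorem \ref{thmupper}; your bound $\kappa_1-1\le C_1^{\,n-1}/C_2$ makes explicit what the paper uses implicitly. You then get higher regularity from the (inverse) concavity of $\tilde E_n^{1/n}$ as in \cite{AndrewsWei2018}, and close with the continuation argument of \cite[section 4]{WeiYangZhou2025}.

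One caution on the $C^{2,\beta}$ step. Simply rewriting the parabolic equation as $\tilde E_n^{1/n}=(\cdot)^{1/\alpha}$ of the time derivative does not give a jointly concave operator when $\alpha>1$. So rely either on the inverse-concavity route, as the paper does, or on the alternative your phrasing suggests: Krylov--Safonov H\"older control of $\Psi$ from its evolution equation, followed by the elliptic Evans--Krylov estimate at each fixed time.
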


\subsection{Hausdorff Convergence and smooth convergence}\label{convergenceofflow}
In this subsection we adapt the technique in \cite{WeiYangZhou2024} to obtain the convergence of the strictly h-convex solution $M_t$ to the flow (\ref{flow}). We note that once the subsequential Hausdorff convergence is established, we can obtain the smooth and exponential convergence of the solution to the flow (\ref{flow}) by similar arguments in \cite[sections 6 and 7]{WeiYangZhou2024}. We first derive the monotonicity of the modified quermassintegral $\tilde{W}_n(\Omega_t)$ and the uniform lower and upper bounds on the area average of shifted Gauss curvature denoted by \begin{align*}
    \overline{\tilde{E}_n}=\frac{\int_{M_t} \tilde{E}_n d\mu_{g_t}}{|M_t|}.
\end{align*}.  %Then following the convergence argument in \cite{WeiYangZhou2024} we have the smooth and exponential convergence of $M_t$. 
\subsubsection{Uniform lower and upper bounds on $\overline{\tilde{E}}_n$}
We first derive the monotonicity of $\tilde{W}_n$.
\begin{lemma}\label{monoofe}
     Let $M_t=\partial \Omega_t$ be a smooth strictly  h-convex solution to (\ref{flow}).  Then $\tilde{W}_n\left(\Omega_t\right)$ is monotone decreasing in time $t$, which is strictly decreasing unless $\Omega_t$ is a geodesic ball.
\end{lemma}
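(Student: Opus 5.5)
We differentiate $\tilde W_n(\Omega_t)$ using Lemma \ref{quermass} with $l=n$, and then show the resulting integral is nonpositive with an inequality that can be an equality only on geodesic spheres. Write $\Psi=\tilde E_n^{\alpha/n}$. Since $E_n(S)=\tilde E_n$, Lemma \ref{quermass} together with the definition \eqref{phi} of $\phi$ gives
\begin{align*}
\frac{d}{dt}\tilde W_n(\Omega_t)=\int_{M_t}\tilde E_n\,(\phi-\Psi)\,d\mu_{g_t}
=\frac{1}{|M_t|}\left(\int_{M_t}\tilde E_n\,d\mu_{g_t}\int_{M_t}\tilde E_n^{\alpha/n}\,d\mu_{g_t}-|M_t|\int_{M_t}\tilde E_n^{1+\alpha/n}\,d\mu_{g_t}\right).
\end{align*}

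The key step is a Chebyshev-type rearrangement inequality. The functions $x\mapsto x$ and $x\mapsto x^{\alpha/n}$ are both increasing on $(0,\infty)$, and $\tilde E_n>0$ because $M_t$ is strictly h-convex. Hence for all $p,q\in M_t$,
\begin{align*}
\bigl(\tilde E_n(p)-\tilde E_n(q)\bigr)\bigl(\tilde E_n^{\alpha/n}(p)-\tilde E_n^{\alpha/n}(q)\bigr)\geq 0.
\end{align*}
Integrating this over $M_t\times M_t$ gives
\begin{align*}
2|M_t|\int_{M_t}\tilde E_n^{1+\alpha/n}\,d\mu_{g_t}-2\int_{M_t}\tilde E_n\,d\mu_{g_t}\int_{M_t}\tilde E_n^{\alpha/n}\,d\mu_{g_t}\geq 0.
\end{align*}
Therefore $\frac{d}{dt}\tilde W_n(\Omega_t)\leq 0$, so $\tilde W_n(\Omega_t)$ is monotone decreasing.

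For the rigidity part, suppose $\frac{d}{dt}\tilde W_n(\Omega_t)=0$ at some time $t$. The integrand above is continuous and nonnegative, so it vanishes identically. Since $\alpha>0$, the map $x\mapsto x^{\alpha/n}$ is strictly increasing, and so $\tilde E_n(p)=\tilde E_n(q)$ for all $p,q\in M_t$. Thus $M_t$ is a smooth closed strictly h-convex hypersurface with constant shifted Gauss curvature, and it remains to show it is a geodesic sphere. This is the main obstacle.

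The first approach is to apply Theorem \ref{modifiedcurvature} with $k=n$. On smooth boundaries $\tilde V_0(\Omega_t,\beta)=\int_{\partial\Omega_t\cap\beta}\tilde E_n\,d\mu$ and $\tilde V_n(\Omega_t,\beta)=|\partial\Omega_t\cap\beta|$. Constancy of $\tilde E_n$ therefore gives $\tilde V_0(\Omega_t,\cdot)=c\,\tilde V_n(\Omega_t,\cdot)$ with $c=\tilde E_n>0$, so $\Omega_t$ is a geodesic ball. A second, self-contained route is the shifted Minkowski formula of Theorem \ref{shiftedminkow} combined with the Maclaurin-type inequality \eqref{simon} applied to the positive shifted curvatures. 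This is exactly the argument of \eqref{kieq1}--\eqref{kieq2} in the smooth case, and it yields $\tilde E_1=\tilde E_n^{1/n}$, hence umbilicity. Consequently, unless $\Omega_t$ is a geodesic ball, $\frac{d}{dt}\tilde W_n(\Omega_t)<0$, so $\tilde W_n(\Omega_t)$ is strictly decreasing.
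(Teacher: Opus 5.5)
Your proposal is correct and follows the paper's proof. Both arguments differentiate $\tilde W_n(\Omega_t)$ via Lemma \ref{quermass} with $l=n$ and conclude from the Chebyshev-type inequality for the increasing functions $x$ and $x^{\alpha/n}$. The paper writes it as $-\int_{M_t}\bigl(\tilde E_n^{\alpha/n}-\overline{\tilde E_n}^{\,\alpha/n}\bigr)\bigl(\tilde E_n-\overline{\tilde E_n}\bigr)d\mu_{g_t}\le 0$, subtracting the mean instead of using your double integral over $M_t\times M_t$. The paper only asserts that constant $\tilde E_n$ forces a geodesic sphere, whereas you justify it (via Theorem \ref{modifiedcurvature} with $k=n$, or the shifted Minkowski and Maclaurin argument).
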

\begin{proof}
    By the evolution equation (\ref{evolutionquer}) and the expression (\ref{phi}) of $\phi$, we have 
    \begin{align*}
     \frac{\partial}{\partial t}\tilde{W}_n(\Omega_t)&=\int_{M_t} \tilde{E}_n(\phi-\tilde{E}^\frac{\alpha}{n}_n)d \mu_{g_t}\\
     &=\frac{1}{| M_t|}\int_{M_t} \tilde{E}_n\left(\int_{M_t}\tilde{E}_n^{\frac{\alpha}{n}}d\mu_{g_t}-\tilde{E}^\frac{\alpha}{n}_n\right)d \mu_{g_t}\\
     &=\int_{M_t} \tilde{E}_n^\frac{\alpha}{n}\left( \frac{\int_{M_t}\tilde{E}_n d\mu_{g_t}}{|M_t|}-\tilde{E}_n\right)d\mu_{g_t}\\
     &=-\int_{M_t} \left(\tilde{E}_n^\frac{\alpha}{n}-\left(\overline{\tilde{E}_n}\right)^\frac{\alpha}{n}\right)\left( \tilde{E}_n-\overline{\tilde{E}_n}\right)d\mu_{g_t}\\
     &\leq 0,   
    \end{align*}
where equality holds if and only if $\tilde{E}_k$ is constant on $M_t$, i.e., $M_t$ is a geodesic sphere.
\end{proof}

The following expression for the modified quermassintegral of geodesic balls of radius $r$ can be found in \cite{Li-Xu}. For the convenience of the reader, we present the  proof below.
\begin{lemma}
    Let $B_r$ be a geodesic ball of radius $r$ centered at $p\in\mathbb{H}^{n+1}$. We have \begin{align}
        \tilde{W}_n(B_r)=\frac{\omega_n}{n}(1-e^{-nr}).\label{modifiedquermassforB}
    \end{align}
\end{lemma}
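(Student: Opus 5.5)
The plan is to use Lemma \ref{lemma3.4}, i.e.\ \eqref{modifiedcurvaturemeasure}, which for any smooth h-convex domain gives
\[
\tilde{W}_n(A)=\frac{1}{n}\Big(\omega_n-\int_{\partial A}\tilde{E}_n\,d\mathcal{L}^n p\Big).
\]
So it suffices to compute $\int_{\partial B_r}\tilde{E}_n$ for the geodesic sphere of radius $r$.

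The integrand is constant. The sphere $\partial B_r$ is umbilic with all principal curvatures equal to $\coth r$. Hence
\[
\tilde{E}_n=(\coth r-1)^n=\Big(\frac{\cosh r-\sinh r}{\sinh r}\Big)^n=\frac{e^{-nr}}{\sinh^n r}.
\]
Its area is $|\partial B_r|=\omega_n\sinh^n r$. Therefore
\[
\int_{\partial B_r}\tilde{E}_n\,d\mu=\omega_n e^{-nr},
\]
and substituting into the formula above gives $\tilde{W}_n(B_r)=\frac{\omega_n}{n}(1-e^{-nr})$, which is \eqref{modifiedquermassforB}.

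As a sanity check, one can instead argue directly from the definition \eqref{Quermassine} together with Lemma \ref{quermass}. Take $r=0$: then $W_i(B_0)=0$ for $i\le n$, so $\tilde{W}_n(B_0)=(-1)^0\binom{n}{n}W_n(B_0)=0$, consistent with the formula. Next, under the normal variation $\partial_t r=1$, Lemma \ref{quermass} with $\phi-\Psi$ replaced by speed $1$ gives
\[
\frac{d}{dr}\tilde{W}_n(B_r)=\int_{\partial B_r}E_n(S)\,d\mu=\omega_n e^{-nr}.
\]
Integrating from $0$ to $r$ recovers the same expression.

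The only real obstacle is bookkeeping. One must confirm that \eqref{modifiedcurvaturemeasure} applies to balls of every radius; balls are smooth and h-convex since $\coth r>1$. One must also check the normalization $W_{n+1}=\omega_n/(n+1)$ and the constant $\omega_n$ in \eqref{QandC}. Verifying the limiting value $0$ as $r\to0$ confirms these constants.
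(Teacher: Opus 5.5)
Your proof is correct, but your main argument takes a different route from the paper. The paper's proof is exactly what you present as a sanity check. It uses the first variation formula $\frac{d}{dr}\tilde W_n(B_r)=\int_{\partial B_r}\tilde E_n\,d\mu=\omega_n e^{-nr}$, which is the general-speed version of Lemma \ref{quermass} taken from Andrews--Chen--Wei, and integrates it from $r=0$ using $\lim_{r\to0^+}\tilde W_n(B_r)=0$ from \eqref{Quermassine}.

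Your primary route instead substitutes the explicit value $\int_{\partial B_r}\tilde E_n=\omega_n e^{-nr}$ into the Gauss--Bonnet-type identity \eqref{modifiedcurvaturemeasure} of Lemma \ref{lemma3.4}. That identity is proved earlier, in Section \ref{rigidity}, purely combinatorially from \eqref{QandC}. So there is no circularity, and no integration in $r$ or degeneration argument is needed.

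The trade-off is this. The paper's route needs only the variation formula and the vanishing of quermassintegrals of shrinking balls. Yours relies on the Gauss--Bonnet constant in $V_0(A)=\omega_n+nW_{n-1}(A)$ and on the alternating-sum bookkeeping of Lemma \ref{lemma3.4}. Because the two computations agree, they cross-check that constant $\omega_n$, which is the normalization issue you flagged.

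One small slip in your $r=0$ check. All the terms $W_i(B_r)$ with $0\le i\le n$ tend to $0$, so the entire alternating sum vanishes, not just the $i=n$ term your displayed expression singles out. Also, this should be phrased as the limit $r\to0^+$, since a point does not belong to $\mathcal{H}_{n+1}(\mathbb{H}^{n+1})$.
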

\begin{proof}
    By \cite[Lemma 2.4]{AndrewsChenWei2021}, we have 
    \begin{align*}
     \frac{\partial}{\partial r}\tilde{W}_n(B_r)=\int_{\partial B_r}\tilde{E}_n d\mu_{g_r}=\omega_ne^{-nr}.   
    \end{align*}
    By (\ref{Quermassine}) we have $$\lim\limits_{r\rightarrow 0^+}\tilde{W}_n(B_r)=0.$$
    Hence we have 
    \begin{align*}
        \tilde{W}_n(B_r)=\int_0^r\omega_ne^{-ns}ds=\frac{\omega_n}{n}(1-e^{-nr}).
    \end{align*}
\end{proof}

Now we derive uniform upper and lower bounds on $\overline{\tilde{E}_n}$.
\begin{lemma}\label{averofG}
    Let $M_t=\partial \Omega_t$ be a  strictly  h-convex solution to the flow  (\ref{flow}). Then there exists a constant $c_{11}>0$ depending on $n$, $M_0$ such that \begin{align}
        \frac{1}{c_{11}}<\overline{\tilde{E}_n}<c_{11}.\label{uniformestofE}
    \end{align}
\end{lemma}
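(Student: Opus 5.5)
We write $\overline{\tilde{E}_n}=\int_{M_t}\tilde{E}_n\,d\mu_{g_t}/|M_t|$ and bound the numerator and the denominator separately, using only quantities that are controlled uniformly in $t$.

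\textbf{Numerator.} By \eqref{modifiedcurvaturemeasure} in Lemma \ref{lemma3.4},
\[
\int_{M_t}\tilde{E}_n\,d\mu_{g_t}=\omega_n-n\tilde{W}_n(\Omega_t).
\]
The upper bound is immediate. Each $M_t$ is strictly h-convex and hence convex, so $\Omega_t\supseteq B(p_t,c_1)$ for an inner ball supplied by Lemma \ref{C0estimate}. Theorem \ref{mono} then gives
\[
\tilde{W}_n(\Omega_t)\ge\tilde{W}_n(B_{c_1})=\tfrac{\omega_n}{n}(1-e^{-nc_1})>0,
\]
where the last equality is \eqref{modifiedquermassforB}. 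Therefore $\int_{M_t}\tilde{E}_n\le \omega_n e^{-nc_1}$.

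For the lower bound, Lemma \ref{C0estimate} gives $\Omega_t\subseteq B(p'_t,c_2)$. The outer ball is h-convex, so Theorem \ref{mono} applied to $\Omega_t\subseteq B_{c_2}$, together with \eqref{modifiedquermassforB}, yields
\[
\tilde{W}_n(\Omega_t)\le \tfrac{\omega_n}{n}(1-e^{-nc_2}).
\]
Hence $\int_{M_t}\tilde{E}_n\ge \omega_n e^{-nc_2}>0$. Lemma \ref{monoofe} offers the alternative bound $\tilde{W}_n(\Omega_t)\le\tilde{W}_n(\Omega_0)$, but it is not needed, since the explicit ball comparison already gives a constant depending only on $n$ and $M_0$.

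\textbf{Denominator.} By \eqref{QandC}, $|M_t|=V_n(\Omega_t)=nW_1(\Omega_t)$. Theorem \ref{thmmonoofquer} applied to $B_{c_1}\subseteq\Omega_t\subseteq B_{c_2}$ gives
\[
|\partial B_{c_1}|\le |M_t|\le |\partial B_{c_2}|,
\]
and both bounds are positive constants depending on $n$ and $M_0$.

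\textbf{Conclusion.} Dividing, we get $\omega_ne^{-nc_2}/|\partial B_{c_2}|\le\overline{\tilde{E}_n}\le \omega_ne^{-nc_1}/|\partial B_{c_1}|$, and we choose $c_{11}$ accordingly.

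The only step that needs care is the use of monotonicity against the outer ball, which requires h-convexity of both sets; this holds because a geodesic ball is h-convex. The identity \eqref{modifiedcurvaturemeasure} requires smoothness of $\Omega_t$, which holds along the flow. No step appears to be a real obstacle.
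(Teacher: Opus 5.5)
Your proposal is correct and follows the paper's argument. The paper also combines $\int_{M_t}\tilde{E}_n\,d\mu_{g_t}=\omega_n-n\tilde{W}_n(\Omega_t)$ with monotonicity of $\tilde{W}_n$ against the outer ball $B_{c_2}$ and the formula \eqref{modifiedquermassforB} to bound the numerator, and uses monotonicity of $W_1$ between $B_{c_1}$ and $B_{c_2}$ to control $|M_t|$. The only difference is that you bound the numerator from above via the inner ball, obtaining $\omega_n e^{-nc_1}$; this makes explicit the nonnegativity of $\tilde{W}_n(\Omega_t)$ that the paper uses without comment when it writes $\int_{M_t}\tilde{E}_n\,d\mu_{g_t}\le\omega_n$.
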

\begin{proof}
    By Theorem \ref{thmlower}, we have $\Omega_t$ is strictly h-convex for any $t\geq 0$. Hence by Lemma \ref{C0estimate} we know that for any $t\geq 0$ there exists a geodesic ball of radius $c_2$ which contains $\Omega_t$. By Theorem \ref{mono} and Lemma \ref{modifiedquermassforB}, we have $$\tilde{W}_n(\Omega_t)\leq \tilde{W}_n(B_{c_2})\leq \frac{\omega_n}{n}(1-e^{-nc_2}).$$ Then, by (\ref{modifiedcurvaturemeasure}),  for any $t\geq 0$ there holds $$\frac{\omega_n e^{-n c_2}}{|M_t|}\leq\overline{\tilde{E}_n}(t)\leq \frac{\omega_n}{|M_t|}.$$ By the strict h-convexity of $M_t$, Theorem \ref{thmmonoofquer} and Lemma \ref{C0estimate}, we have $|\partial B_{c_1}|\leq|M_t|\leq|\partial B_{c_2}|$,  which yields $$\frac{\omega_n e^{-n c_2}}{|\partial B_{c_2}|}\leq\overline{\tilde{E}_n}(t)\leq \frac{\omega_n}{|\partial B_{c_1}|}.$$
\end{proof}
\begin{rmk}
    By the modified Minkowski formula \eqref{shiftedminkow} and Lemma \ref{C0estimate}, there exists a constant $c_{12}>0$ depending on $n$, $M_0$ such that  $$\frac{1}{c_{12}}<\frac{\int_{M_t} \tilde{E}_kd\mu_{g_t}}{|M_t|}<c_{12}, \,\forall 1\leq k\leq n.$$
\end{rmk}
\subsubsection{Subsequential Hausdorff convergence}
Having derived the monotonicity of $\tilde{W}_n$ in Lemma \ref{monoofe} and established the uniform lower and upper bounds on $\overline{\tilde{E}}_n$ in Lemma  \ref{averofG}, we can adapt similar arguments in \cite[Lemma 6.2]{WeiYangZhou2024}  to  obtain the following $\mathcal{L}^1$ oscillation decay of shifted Gauss curvature. 
\begin{lemma}\label{subconvergeofE}
    Let $M_t=\partial \Omega_t$ be a smooth strictly  h-convex solution to (\ref{flow}). Then there exists a subsequence of $\{t_i\}\subseteq [0,\infty)$ with $t_i\rightarrow +\infty$ such that
    \begin{align}
        \int_{M_{t_i}}\left|\tilde{E}_n-\overline{\tilde{E}_n}\right|d \mu_{g_{t_i}}\rightarrow 0, \quad \text{as}\ i\rightarrow\infty.
    \end{align}
\end{lemma}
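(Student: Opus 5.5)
The plan is to integrate the monotonicity identity of Lemma \ref{monoofe} in time and then convert the resulting integral bound into an $\mathcal{L}^1$ bound on the oscillation of $\tilde{E}_n$. Write $\Psi=\tilde{E}_n^{\alpha/n}$ and $\bar E=\overline{\tilde{E}_n}$. The computation in the proof of Lemma \ref{monoofe} gives
\[
-\frac{d}{dt}\tilde{W}_n(\Omega_t)=\int_{M_t}\left(\tilde{E}_n^{\frac{\alpha}{n}}-\bar E^{\frac{\alpha}{n}}\right)\left(\tilde{E}_n-\bar E\right)d\mu_{g_t}\geq 0 .
\]
Since $\tilde{W}_n(\Omega_t)\geq 0$ (it is bounded below, e.g.\ by $\tilde W_n$ of an inner ball via Theorem \ref{mono} and \eqref{modifiedquermassforB}), integrating over $[0,\infty)$ shows that the integrand $D(t)$ of the right-hand side is in $L^1(0,\infty)$. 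Hence there is a sequence $t_i\to\infty$ with $D(t_i)\to 0$.

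Next I will show that $D(t)\to0$ forces $\int_{M_t}|\tilde{E}_n-\bar E|\,d\mu\to0$. Pointwise, for $x\ge0$ and $\bar E\in[1/c_{11},c_{11}]$ (Lemma \ref{averofG}), the function $(x^{\alpha/n}-\bar E^{\alpha/n})(x-\bar E)$ is nonnegative and vanishes only at $x=\bar E$. I split $M_t$ into the regions $\{|\tilde{E}_n-\bar E|\le\delta\}$ and its complement. On the complement there is a lower bound
\[
(x^{\frac{\alpha}{n}}-\bar E^{\frac{\alpha}{n}})(x-\bar E)\geq c(\delta)\,|x-\bar E|,
\]
with $c(\delta)>0$ depending only on $\delta,\alpha,n,c_{11}$. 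This follows by checking separately the cases $x\le\bar E-\delta$ (bounded range) and $x\ge \bar E+\delta$, where the factor $x^{\alpha/n}-\bar E^{\alpha/n}$ is at least a positive constant. Hence
\[
\int_{M_t}|\tilde{E}_n-\bar E|\,d\mu\le \delta|M_t|+c(\delta)^{-1}D(t).
\]
Using $|M_t|\le|\partial B_{c_2}|$ from Lemma \ref{C0estimate}, I will evaluate at $t=t_i$, let $i\to\infty$, and then let $\delta\to0$ to conclude.

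The main obstacle is the uniform constant $c(\delta)$, specifically that it must not degenerate as $\bar E$ varies. This is exactly where the two-sided bound of Lemma \ref{averofG} is needed, both for small $\alpha/n$ (where $x^{\alpha/n}$ is flat at large $x$) and for the lower region. The positivity of $\tilde W_n$, needed to make the time integral finite, should also be stated carefully. A possible refinement is to choose the $t_i$ in intervals $[k,k+1]$ to keep them spread out, but this is not needed for the statement.
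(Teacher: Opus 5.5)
Your proposal is correct and is essentially the argument the paper delegates to \cite[Lemma 6.2]{WeiYangZhou2024}: integrate the dissipation identity from Lemma \ref{monoofe} using the lower bound $\tilde W_n(\Omega_t)\geq \tilde W_n(B_{c_1})>0$, choose times $t_i\to\infty$ at which the dissipation tends to zero, and turn this into $L^1$ oscillation decay via a uniform pointwise inequality together with Lemma \ref{averofG} and $|M_t|\le|\partial B_{c_2}|$. One minor remark: your constant $c(\delta)$ needs only the upper bound $\overline{\tilde E_n}\le c_{11}$, since the differences $(y+\delta)^{\alpha/n}-y^{\alpha/n}$ and $y^{\alpha/n}-(y-\delta)^{\alpha/n}$ degenerate only as $y\to\infty$ (and only when $\alpha<n$); the lower bound is not used here and is needed only later, to make the limiting constant $c$ positive.
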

Now, adapting an argument similar to that in \cite{WeiYangZhou2024}, we can derive the subsequential Hausdorff convergence of $M_t$ and characterize the limit shape by combining Theorem \ref{hausdorffofhconvex} and Theorem \ref{modifiedcurvature}, which are adapted to the h-convex setting.
\begin{thm}
    Let $M_t=\partial \Omega_t$ be a smooth strictly  h-convex solution to the flow \eqref{flow}. Then there exists a subsequence of $\{t_i\}\subseteq [0,\infty)$ with $t_i\rightarrow +\infty$ such that $M_{t_i}$ converge to a geodesic sphere $\partial B_{\rho_{\infty}}(\tilde{p})$ in Hausdorff sense, where $\tilde{p}$ is the center of $B_{\rho_{\infty}}(\tilde{p})$ and $\mathrm{Vol}(B_{\rho_{\infty}}(\tilde{p}))=\mathrm{Vol}(\Omega_0)$.
\end{thm}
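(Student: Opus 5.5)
The plan is to follow the Wei--Yang--Zhou strategy: extract a Hausdorff-convergent subsequence, pass the $\mathcal{L}^1$ oscillation decay of Lemma \ref{subconvergeofE} to the limit as an identity between modified curvature measures, and then apply Theorem \ref{modifiedcurvature} with $k=n$.

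\textbf{Step 1: a Hausdorff limit.} Take the sequence $t_i\to\infty$ from Lemma \ref{subconvergeofE}. By Lemma \ref{C0estimate} the domains $\Omega_{t_i}$ satisfy $c_1<\rho_-(\Omega_{t_i})$ and $\rho_+(\Omega_{t_i})<c_2$. I want them to lie in a fixed bounded set. If the inner-ball centers drift, I will apply hyperbolic isometries $T_i$ that move a fixed point into $\Omega_{t_i}$; the convergence statement is then made modulo isometries, or one shows afterwards that no recentering is needed, as in \cite{WeiYangZhou2024}. Theorem \ref{hausdorffofhconvex} then gives a further subsequence with $\Omega_{t_i}\to\Omega_\infty\in\mathcal{H}_{n+1}(\mathbb{H}^{n+1})$. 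The limit has inner radius at least $c_1$, and $\mathrm{Vol}(\Omega_\infty)=\mathrm{Vol}(\Omega_0)$ because volume is continuous under Hausdorff convergence of convex bodies.

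\textbf{Step 2: weak convergence of the modified measures.} Identity \eqref{relationsbetweenge} gives $\tilde V_r=\binom{n}{r}\sum_{\ell}(-1)^{n-r-\ell}\binom{n-r}{\ell}\binom{n}{n-\ell}^{-1}V_{n-\ell}$. So each $\tilde V_r(A,\cdot)$ is a fixed finite linear combination of the Kohlmann measures $V_j(A,\cdot)$, and Theorem \ref{weakcon} gives $\tilde V_r(\Omega_{t_i},\cdot)\rightharpoonup\tilde V_r(\Omega_\infty,\cdot)$ for every $r$. In particular this holds for $r=0$ and $r=n$, the two measures I need. Lemma \ref{averofG} bounds $\overline{\tilde E_n}(t_i)$ between two positive constants, and $|M_{t_i}|$ converges to $|\partial\Omega_\infty|$ (weak convergence of $V_n$). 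So after passing to a subsequence, $\overline{\tilde E_n}(t_i)\to c>0$.

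\textbf{Step 3: the identity in the limit, and conclusion.} Let $\varphi$ be a continuous compactly supported test function. Then
\begin{align*}
\Bigl|\int\varphi\,d\tilde V_0(\Omega_{t_i},\cdot)-\overline{\tilde E_n}(t_i)\int\varphi\,d\tilde V_n(\Omega_{t_i},\cdot)\Bigr|\le\|\varphi\|_\infty\int_{M_{t_i}}\bigl|\tilde E_n-\overline{\tilde E_n}\bigr|\,d\mu_{g_{t_i}}\to0,
\end{align*}
where on the smooth domains $\tilde V_0=\int\tilde E_n\,d\mu$ and $\tilde V_n=\int d\mu$. Letting $i\to\infty$ gives $\tilde V_0(\Omega_\infty,\cdot)=c\,\tilde V_n(\Omega_\infty,\cdot)$ as Radon measures, and hence on every Borel set. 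Theorem \ref{modifiedcurvature} with $k=n\ge2$ then shows that $\Omega_\infty$ is a geodesic ball $B_{\rho_\infty}(\tilde p)$. Its volume equals $\mathrm{Vol}(\Omega_0)$, which fixes $\rho_\infty$. Hausdorff convergence of convex bodies with nonempty interior implies Hausdorff convergence of their boundaries, so $M_{t_i}\to\partial B_{\rho_\infty}(\tilde p)$.

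\textbf{Main obstacle.} I expect the delicate point to be the uniform boundedness in Step 1. Lemma \ref{C0estimate} controls the radii but not the location of the domains. The first approach I would try is the Wei--Yang--Zhou argument: the speed is bounded on average, which prevents the inner-ball centers from escaping to infinity along the chosen subsequence. If that fails, I would work modulo isometries, which suffices for this theorem up to the choice of the center $\tilde p$. A secondary check is that the linear combination in Step 2 has constant coefficients, so that the weak convergence of Theorem \ref{weakcon} transfers directly to the modified measures.
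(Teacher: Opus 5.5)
Your Steps 2 and 3 coincide with the paper's proof, and they are correct. These are the weak continuity of $\tilde V_0,\tilde V_n$ via \eqref{relationsbetweenge} and Theorem \ref{weakcon}, the passage from Lemma \ref{subconvergeofE} to $\tilde V_0(\Omega_\infty,\cdot)=c\,\tilde V_n(\Omega_\infty,\cdot)$, and Theorem \ref{modifiedcurvature} with $k=n$.

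\textbf{Gap in Step 1.} The gap is the point you flagged there, and neither proposed fix closes it. Theorem \ref{hausdorffofhconvex} needs all $\Omega_{t_i}$ to lie in one fixed ball. Lemma \ref{C0estimate} controls only $\rho_\pm(\Omega_t)$, not position.
\begin{itemize}
\item \emph{Bounded speed.} A bound on the speed, even a pointwise one, cannot prevent drift on $[0,\infty)$, since displacement is controlled only by the time integral of the speed. Moreover, you do not have such a bound at this stage. Lemma \ref{averofG} bounds $\overline{\tilde E_n}$, but for $\alpha>n$ Jensen gives only $\phi\ge\overline{\tilde E_n}^{\alpha/n}$. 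The curvature bounds of Theorems \ref{thmlower} and \ref{thmupper} depend on $T$.
\item \emph{Wei--Yang--Zhou.} Their center-convergence argument in \cite{WeiYangZhou2024} is run after Hausdorff convergence is known, so it cannot supply this step.
\item \emph{Working modulo isometries.} This gives only $T_i(\Omega_{t_i})\to B_{\rho_\infty}$, i.e.\ convergence of shapes. The statement asserts that the $M_{t_i}$ themselves converge to a sphere with a fixed center $\tilde p$. That fails if the domains drift away, and the later smooth-convergence argument relies on it.
\end{itemize}

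\textbf{Missing ingredient.} What is needed is the Chow--Gulliver Alexandrov reflection argument, which the paper invokes in the hyperbolic form of \cite[Lemma 4.1]{AndrewsChenWei2021}.
\begin{itemize}
\item Take a totally geodesic hyperplane $H$ with $\Omega_0$ entirely on one side. Consider the property that the reflection across $H$ of the part of $\Omega_t$ on the other side is contained in $\Omega_t$. It holds trivially at $t=0$.
\item It is preserved by the comparison principle, because the flow commutes with isometries and the nonlocal term $\phi(t)$ is the same for $M_t$ and its reflection.
\item Consequently $\Omega_t$ can never lie entirely on the far side of such an $H$. Since convex sets in $\mathbb{H}^{n+1}$ can be separated by totally geodesic hyperplanes, $\Omega_t$ meets a fixed compact neighborhood of $\Omega_0$ for all $t$.
\item Combined with $\rho_+(\Omega_t)<c_2$, this puts every $\Omega_t$ in one fixed ball.
\end{itemize}
With this in place, the rest of your argument goes through as written.
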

\begin{proof}
    Let $\{t_i\}$ be a sequence satisfying the condition stated in Lemma \ref{subconvergeofE}. By an   Alexandrov reflection argument in \cite[Lemma 4.1]{AndrewsChenWei2021}, we know that $\{\Omega_{t_i}\}$ is a bounded subset of $\mathcal{H}_{n+1}(\mathbb{H}^{n+1})$. By Lemma \ref{C0estimate} and Theorem \ref{hausdorffofhconvex},  there exist $\Omega_{t_\infty}\in\mathcal{H}_{n+1}(\mathbb{H}^{n+1})$ and a subsequence $\{t_{i_k}\}\subseteq\{t_i\}$ such that $\Omega_{t_{i_k}}\rightarrow \Omega_{t_\infty}$ in Hausdorff sense as $i_k\rightarrow \infty$.  Moreover, by Lemma \ref{averofG}, we can assume that there exists $c>0$ such that the $\overline{\tilde{E}_n}(t_{i_k})$ converges to $c$.
     By (\ref{tildeV}) and Theorem \ref{weakcon}, we can obtain the weak continuity of the modified curvature measures.
    Hence, for any bounded continuous function $f$ on $\mathbb{H}^{n+1}$ with compact support, we have 
     \begin{align*}
           &\left|\int f d \tilde{V}_0(\Omega_{t_{i_k}},\cdot)-c \int f d \tilde{V}_n(\Omega_{t_{i_k}},\cdot)\right| \\= & \left|\int_{M_{t_{i_k}}} f \tilde{E}_n d \mu_{g_{t_{i_k}}}-\int_{M_{t_{i_k}}} c f d \mu_{g_{t_{i_k}}}\right| \\ \leq & \sup |f| \int_{M_{t_{i_k}}}|\tilde{E}_n-c| d \mu_{g_{t_{i_k}}} \\ \leq & \sup |f| \int_{M_{t_{i_k}}}|\tilde{E}_n-\overline{\tilde{E}_n}| d \mu_{g_{t_{i_k}}}\\&+\sup |f| \int_{M_{t_{i_k}}}|\overline{\tilde{E}_n}-c| d \mu_{g_{t_{i_k}}} \rightarrow 0,\, \text{as}\  i_k\rightarrow \infty,
     \end{align*}
where the limit in the final step is guaranteed by Lemma \ref{subconvergeofE} and the uniform upper bound on $|M_{t_{i_k}}|$. 
Hence we have $$\int f d \tilde{V}_0(\Omega_{t_\infty},\cdot)=c\int f d \tilde{V}_n(\Omega_{t_\infty},\cdot),$$ i.e., $c\tilde{V}_n(\Omega_{t_\infty},\cdot)=\tilde{V}_0(\Omega_{t_\infty},\cdot)$. By Theorem \ref{modifiedcurvature}, we know that $\Omega_{t_\infty}$ is a geodesic ball. Since the volume of $\Omega_t$ is preserved along the flow \eqref{flow}, this completes the proof.
\end{proof}

Let $M_t$ be a smooth strictly h-convex solution to the flow \eqref{flow}. 
Having established the subsequential Hausdorff convergence of $M_t$, we can apply the argument in \cite[Section 6]{WeiYangZhou2024} to  obtain the convergence of the centers of the inner balls of $M_t$, from which we can obtain the uniform upper and lower bounds on the shifted principal curvatures of $M_t$ (see \cite{WeiYangZhou2024} for details). Then, following \cite[Section 7]{WeiYangZhou2024}, we complete the proof of Theorem \ref{mainthm}.

\section*{Acknowledgements}
	The author would like to thank Prof. Yong Wei for discussions. This work is supported by National Key Research and Development Program of China 2021YFA1001800.

%----------------------------------------------------------

%\bibliographystyle{plainnat}
%\bibliography{reference.bib}

\end{document}